\documentclass{amsart}[11pt]
\usepackage{enumerate}
\usepackage{a4wide}
\usepackage[english]{babel}

\usepackage[mathscr]{euscript}
\usepackage{epsfig}
\usepackage{latexsym}

\usepackage{amssymb}
\usepackage{amsthm}
\usepackage{amsmath}
\usepackage{amsxtra}


.tfm
.tfm


\theoremstyle{plain}
\newtheorem{prop}{Proposition}[section]
\newtheorem*{thmA}{Theorem A}
\newtheorem*{thmB}{Theorem B}
\newtheorem*{prop*}{Proposition}

\newtheorem*{thm*}{Theorem}
\newtheorem{coro}[prop]{Corollary}

\newtheorem{lemma}[prop]{Lemma}

\theoremstyle{definition}

\theoremstyle{remark}

\newtheorem*{remark}{Remark}

\numberwithin{table}{section}


\DeclareMathOperator{\Frob}{Frob}

\DeclareMathOperator{\Gal}{Gal}

\DeclareMathOperator{\coho}{H}

\DeclareMathOperator{\sha}{III}


\newcommand{\F}{\mathbb F}

\newcommand{\HH}{\mathcal H}
\newcommand{\LL}{\mathcal N}

\newcommand{\Om}{{\mathscr{O}}}

\newcommand{\nequiv}{\not\equiv}

\def\FF{\mathbb F}


\def\<#1>{{\left\langle{#1}\right\rangle}}



\def\Z{{\mathbb Z}}             
\def\Q{{\mathbb Q}}             
\def\C{{\mathbb C}}             


\def\set#1{{\left\{{\def\st{\;:\;}#1}\right\}}}





\def\O{{\mathcal{O}}}           
\def\Ol(#1){{\mathop{\O_l}(\id{#1})}}                 
\def\Or(#1){{\mathop{\O_r}(\id{#1})}}                 
\def\Orp(#1){{\mathop{\O_r}(\idp{#1})}}               
\def\Otern(#1){{\mathop{\O^0_r}(\id{a})}}    

\def\id#1{{\mathfrak{#1}}}      
\def\idp#1{{\id{#1}_p}}         






\def\T_#1(#2){{\mathop{\mathscr T}\nolimits_{#1}(\id{#2})}}
\def\TO(#1)_#2(#3){{\mathop{\mathscr T}\nolimits^{#1}_{#2}(\id{#3})}}

\def\HeckeRing{\mathbb{T}}


\def\A_#1(#2){{\mathop{\mathscr A}\nolimits_{#1}(\id{#2})}}
\def\Ax_#1(#2){{\mathop{\widetilde{\mathscr A}}\nolimits_{#1}(\id{#2})}}








\def\Ad{Ad^0\bar{\rho}}
\def\Adr{\widetilde{Ad}^0\bar{\rho}}
\def\Add{Ad^0{\rho}}

\def\rrho{\overline{\rho}}
\def\rrhon{\overline{\rho_n}}
\def\trho{\tilde{\rho}}
\def\T{\mathcal{T}}
\DeclareMathOperator{\GL}{GL}
\DeclareMathOperator{\SL}{SL}
\DeclareMathOperator{\PSL}{PSL}
\DeclareMathOperator{\PGL}{PGL}
\DeclareMathOperator{\Img}{Im}
\DeclareMathOperator{\Ind}{Ind}
\DeclareMathOperator{\Ker}{Ker}

\usepackage[latin1]{inputenc}
\usepackage[all]{xy}
\usepackage{color}
\usepackage[active]{srcltx}
\usepackage{url}
\begin{document}
\title{Congruences between modular forms modulo prime powers}

\author{Maximiliano Camporino}
\address{Departamento de Matem\'atica, Facultad de Ciencias Exactas y Naturales, Universidad de Buenos Aires} 
\email{maxicampo@gmail.com}
\thanks{MC was partially supported by a CONICET doctoral fellowship}

\author{Ariel Pacetti}
\address{Departamento de Matem\'atica, Facultad de Ciencias Exactas y Naturales, Universidad de Buenos Aires and IMAS, CONICET, Argentina}
\email{apacetti@dm.uba.ar}
\thanks{AP was partially supported by CONICET PIP 2010-2012 GI and FonCyT BID-PICT 2010-0681.}

\begin{abstract}
  Given a prime $p \ge 5$ and an abstract odd representation $\rho_n$
  with coefficients modulo $p^n$ (for some $n \ge 1$) and big image,
  we prove the existence of a lift of $\rho_n$ to characteristic $0$
  whenever local lifts exist (under some technical
  conditions). Moreover, we can chose the inertial type of our lift at
  all primes but finitely many (where the lift is of Steinberg type).
 
  We apply this result to the realm of modular forms, proving a level
  lowering theorem modulo prime powers and providing examples of level
  raising.  In particular, our method shows that given a modular eigenform
  $f$ without Complex Multiplication or inner twists, for all primes $p$ but finitely
  many, and for all positive integers $n$, there exists another
  eigenform $g\neq f$, which is congruent to $f$ modulo $p^n$.
 
\end{abstract}

\maketitle

\section{Introduction}

The aim of the present article is to deal with congruences between
modular forms (and more generally, abstract representations) modulo
prime powers. The main strategy of the paper is to adapt the arguments
of \cite{ravi2} and \cite{ravi3} to this new setting, which is harder
due to semisimplification problems. Let $\F$ be a finite field of
residual characteristic $p$, and $\rho_n:G_\Q \to \GL_2(W(\F)/p^n)$ be
a continuous representation. We denote by $\overline{\rho_n}$ its
reduction modulo $p$.



%
%
%


If $T$ is a finite set of primes, we denote by $G_T$ the Galois group
of $\Gal(\Q_T/\Q)$, where $\Q_T$ is the maximal extension of $\Q$
unramified outside $T$, and by $G_\Q$ we will denote the whole Galois group $\Gal(\overline{\Q}/\Q)$. One of the main results of this work is the following.

\begin{thmA}
  Let $\F$ be a finite field of characteristic $p>5$.  Consider
  $\rho_n: G_\Q \to \GL_2(W(\F)/p^n)$ a continuous representation
  ramified at a finite set of primes $S$ satisfying the following properties:
  \begin{itemize}
  \item The image is big, i.e. $\SL_2(\F) \subseteq \Img(\rrhon)$.
  \item $\rho_n$ is odd.
  \item The restriction $\overline{\rho_n}|_{G_p}$ is not twist equivalent
    to the trivial representation nor the indecomposable unramified
    representation given by $\left(\begin{smallmatrix} 1 &*\\ 0 &
        1\end{smallmatrix}\right)$.


  \end{itemize}
Let $P$ be a finite set of primes containing $S$, and for every
  $\ell\in P$, $\ell \neq p$, fix a deformation 
$\rho_\ell:G_\ell \to W(\F)$ of $\rho_n|_{G_\ell}$.
At the prime $p$, let $\rho_p$ be
a deformation of $\rho_n|_{G_p}$ which is ordinary or crystalline with
Hodge-Tate weights $\{0,k\}$, with $2 \le k \le p-1$.

Then there is a finite set $Q$ of auxiliary primes $q \nequiv \pm1 \pmod{p}$ and a modular representation
\[
\rho: G_{P\cup Q} \longrightarrow \GL_2(W(\F)),
\]
such that:
\begin{itemize}
\item the reduction modulo $p^n$ of $\rho$ is $\rho_n$,
\item $\rho|_{I_\ell} \simeq \rho_\ell|_{I_\ell}$ for every $\ell \in P$,
\item $\rho|_{G_q}$ is a ramified representation of Steinberg type for every $q \in Q$.
\end{itemize}
 \end{thmA}

 This result, contrary to the results of Ramakrishna, is only about odd
 representations (and hence modular by Serre's conjectures). In the
 even case, the exact same ideas plus some extra hypothesis (as in
 \cite{ravi2}) give a result for any abstract representation with big image.


 \begin{remark}
   Theorem A is in the same spirit as Theorem $3.2.2$ of \cite{BD},
   where they only consider residual representations, and allow the
   coefficient field to grow. The advantage of their method is that it
   does not require to add extra ramification (so $Q = \emptyset$),
   but this phenomena only works while working modulo a prime. For
   example, the elliptic curve $329a1$ is unramified at $7$ modulo
   $9$, but there are no newforms of level $47$ congruent to it modulo
   $9$ (see \cite{dummigan}).
 \end{remark}

 \begin{coro}[Lowering the level] Let $\rho_n:G_\Q\to
   \GL_2(\Om_f/\id{p}^n)$ be a representation attached to a modular
   form $f \in S_k(\Gamma_0(M),\varepsilon)$, and suppose that:
   \begin{itemize}
   \item $p \ge 5$.
   \item $2 \le k \le p-1$.
   \item $\SL_2(\Om_f/\id{p}) \subseteq \Img(\overline{\rho_n})$.
   \item $p$ does not ramify in $\Om_f$.
   \end{itemize}

   If $\ell \mid M$ is such that $\rho_n$ is unramified at $\ell$,
   then the Hecke map factors through the $\ell$-old quotient
   $\HeckeRing_k^{\ell\text{-old}}(M,\ell)$.
\label{coro:loweringthelevel}
 \end{coro}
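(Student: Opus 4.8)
The plan is to deduce the corollary from Theorem A, applied to the $\id p$-adic representation attached to $f$. Put $\F=\Om_f/\id p$; since $p$ does not ramify in $\Om_f$, the completion of $\Om_f$ at $\id p$ is $W(\F)$, so the representation $\rho_f\colon G_\Q\to\GL_2(W(\F))$ attached to $f$ makes sense and $\rho_n$ is its reduction modulo $p^n$. Let $S$ be the finite set of primes at which $\rho_n$ ramifies; then $S\subseteq\{q:q\mid Mp\}$, and by hypothesis $\ell\notin S$ (and we may assume $\ell\neq p$). The key point is that, while $\rho_f$ is ramified at $\ell$ when $\ell\mid M$, the reduction $\rho_n$ is not: it is precisely this discrepancy that Theorem A converts into a lowering of the level.

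First I would check the three hypotheses of Theorem A for $\rho_n$. The big-image hypothesis is assumed. Oddness of $\rho_n$ is inherited from $\rho_f$, which is odd because $f$ is a classical eigenform of weight $k\ge 2$. For the condition at $p$, one uses the known local structure of $\rho_f$ at $p$: by Deligne and Fontaine--Laffaille when $p\nmid M$ (here $2\le k\le p-1$ puts us in the admissible range), and by the ordinary local description when $p\mid M$, the restriction $\overline{\rho_n}|_{I_p}$ is, up to an unramified twist, one of $\mathrm{diag}(\chi_p^{\,k-1},1)$, a non-split extension of $1$ by $\chi_p^{\,k-1}$, or $\psi^{\,k-1}\oplus\psi^{p(k-1)}$ with $\psi$ a fundamental character of level $2$. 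Since $1\le k-1\le p-2$ is not a multiple of $p-1$, in each case $\overline{\rho_n}|_{I_p}$ is non-trivial, so $\overline{\rho_n}|_{G_p}$ is twist equivalent neither to the trivial representation nor to the unramified indecomposable $\left(\begin{smallmatrix}1&*\\0&1\end{smallmatrix}\right)$, both of which become trivial on inertia after an unramified twist. I expect this to be the step that requires the most attention, as it is exactly where the bound $2\le k\le p-1$ enters.

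Next I would apply Theorem A with $P=S\cup\{p\}$ (so that $\ell\notin P$), taking as local deformations $\rho_{\ell'}:=\rho_f|_{G_{\ell'}}$ for $\ell'\in P\smallsetminus\{p\}$ and $\rho_p:=\rho_f|_{G_p}$, which is ordinary if $p\mid M$ and crystalline with Hodge--Tate weights $\{0,k-1\}$ if $p\nmid M$, as demanded by the hypothesis on the lift at $p$. Theorem A then produces a finite set $Q$ of auxiliary primes $q\nequiv\pm1\pmod p$ --- which, exploiting the freedom in the Chebotarev argument constructing $Q$, we may assume avoids $\ell$ --- together with a modular representation $\rho\colon G_{P\cup Q}\to\GL_2(W(\F))$ such that $\rho\equiv\rho_n\pmod{p^n}$, $\rho|_{I_{\ell'}}\simeq\rho_f|_{I_{\ell'}}$ for every $\ell'\in P$, and $\rho|_{G_q}$ is of Steinberg type for $q\in Q$. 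In particular $\rho$ is unramified at $\ell$.

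Finally I would translate this back to the Hecke side. As $\rho$ is modular, with Hodge--Tate weights $\{0,k-1\}$ at $p$ and $\det\rho=\det\rho_f$, it equals $\rho_g$ for an eigenform $g$ of weight $k$ and nebentype $\varepsilon$; its level $N_g$ equals the conductor of $\rho$, and since the conductor exponent at each prime is determined by the restriction to inertia we get $v_{\ell'}(N_g)\le v_{\ell'}(M)$ for $\ell'\in P\smallsetminus\{p\}$, the $p$-part of $N_g$ equal to that of $\rho_f$, $v_q(N_g)=1$ for $q\in Q$, and crucially $v_\ell(N_g)=0$; hence $N_g\mid\bigl(M/\ell^{v_\ell(M)}\bigr)\prod_{q\in Q}q$. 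Moreover $\rho_g\equiv\rho_f\pmod{\id p^n}$ gives $a_r(g)\equiv a_r(f)\pmod{\id p^n}$ for all $r$ prime to $N_gMp$, so the system of Hecke eigenvalues of $f$ modulo $\id p^n$ is realized by an eigenform that is old at $\ell$; this is exactly the assertion that the Hecke map attached to $f$ factors through the $\ell$-old quotient $\HeckeRing_k^{\ell\text{-old}}(M,\ell)$. The one bookkeeping point is that the auxiliary primes in $Q$ push the level of $g$ above $M$; but the extra ramification occurs only at primes away from $M\ell$, so it does not affect $\ell$-oldness, and one passes from level $\mathrm{lcm}(M,N_g)$ down to level $M$ along the usual degeneracy maps.
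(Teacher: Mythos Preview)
Your application of Theorem~A is set up correctly, and the verification of its hypotheses is fine. The gap is in the very last step. The form $g$ you produce has level $N_g$ divisible by the auxiliary primes $q\in Q$, and at each such $q$ the representation $\rho|_{G_q}$ is \emph{ramified} Steinberg; hence $g$ is genuinely new at every $q\in Q$. You cannot ``pass from level $\operatorname{lcm}(M,N_g)$ down to level $M$ along the usual degeneracy maps'': $g$ is not in the image of those maps, and the trace map kills it. What you have shown is that the eigensystem of $f$ modulo $\id p^n$ occurs in the $\ell$-old part of $S_k(\Gamma_0(M\prod_{q\in Q}q),\varepsilon)$, but the corollary asks for the $\ell$-old part of $S_k(\Gamma_0(M),\varepsilon)$, i.e.\ that the map $\HeckeRing_k(M)\to W(\F)/p^n$ factors through $\HeckeRing_k^{\ell\text{-old}}(M,\ell)$. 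These are not the same statement, and no amount of bookkeeping with degeneracy maps bridges them.

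The paper closes this gap by invoking Dummigan's level-lowering theorem (Theorem~1 of \cite{dummigan}), which gives the desired factoring directly whenever the prime to be removed is $\nequiv 1\pmod p$. The argument then runs as follows: if $\ell\nequiv 1\pmod p$, Dummigan's theorem applies immediately and Theorem~A is not needed at all; if $\ell\equiv 1\pmod p$, one applies Theorem~A exactly as you do to trade $\ell$ for a finite set of Steinberg primes $q\nequiv\pm 1\pmod p$, and then applies Dummigan's theorem to each of these $q$ in turn to strip them from the level. The constraint $q\nequiv\pm 1\pmod p$ in Theorem~A is there precisely so that this second step is available. Without an input of this kind, your argument does not reach the conclusion.
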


 \begin{proof} The proof just consists on combining the result for
   primes $\ell \nequiv 1 \pmod p$ (which was proved in
   \cite{dummigan}, Theorem 1), with Theorem A that allows us to move the
   ramified primes to a situation where we get more control on the
   extra Steinberg ramification. Specifically, if $\ell \equiv 1 \pmod
   p$, then by Theorem A, we can find a form $g$ with the same
   ramification as $f$, but without $\ell$ in the level at the cost of
   adding many Steinberg primes $q \nequiv 1 \pmod p$. But these extra
   primes in the level of the form $g$ satisfy the hypotheses of Dummigan's Theorem,
   so we can remove them as well.
 \end{proof}

 Let $f\in S_k(\Gamma_0(N),\epsilon)$ ($k \ge 2$) be a newform, with
 coefficient field $K$.  Let $\Om_K$ denote the ring of integers of
 $K$. If $p$ is a prime number, let $\id{p}$ denote a prime ideal in
 $\Om_K$ dividing $p$ with ramification index $1$ and $\Om_\id{p}$ the
 completion at $\id{p}$. Finally let
 $\rho_{f,p}:\Gal(\overline{\Q}/\Q) \to \GL_2(K_{\id{p}})$ denote its
 associated $p$-adic Galois representation. If $n$ is a positive
 integer, let
\[
\rho_n:G_\Q \to \GL_2(\Om_{\id{p}}/{\id{p}^n})
\] 
be its reduction. Applying Theorem A to this representation, we are
able to derive the other main result of this paper.
 
\begin{thmB}
  In the above hypothesis, let $n$ be a positive integer and $p>k$ be
  a prime such that:
  \begin{itemize}
  \item $p\nmid N$ or $f$ is ordinary at $p$,
  \item $\SL_2(\Om_{\id{p}}/\id{p}) \subseteq \Img(\overline{\rho_{f,p}})$,
  \end{itemize}
  Let $N'$ denote the conductor of $\rho_n$. If $N''=\prod_{p \mid
    N'}p^{v_p(N)}$, i.e. $N''$ is $N$ divided by its prime-to-$N'$
  part, then there exist an integer $r$, a set $\{q_1,\ldots,q_r\}$ of
  auxiliary primes prime to $N$ satisfying $q_i \not \equiv 1 \pmod p$
  and a newform $g$, different from $f$, of weight $k$ and level
  $N''q_1\ldots q_r$ such that $f$ and $g$ are congruent modulo
  $p^n$. Furthermore, the form $g$ can be chosen with the same
  restriction to inertia as that of $f$ at the primes dividing $N'$.
\end{thmB}

\begin{coro}
  Let $f\in S_k(\Gamma_0(N),\epsilon)$, $k \ge 2$ be a newform which
  has no complex multiplication or inner twists. Then for all but
  finitely many prime numbers $p$, and for all positive integers $n$,
  there exists a weight $k$ newform $g$ (depending on $p$ and $n$)
  different from $f$, which is congruent to $f$ modulo $p^n$.
\label{coro:congruenceproblem}
\end{coro}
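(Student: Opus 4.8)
The plan is to deduce this statement from Theorem B by checking, for all but finitely many primes $p$, that the hypotheses of Theorem B are met and that the conclusion produces a genuinely new form rather than simply recovering $f$. First I would recall the standard ``big image'' result of Ribet, Momose and Serre: since $f$ has no complex multiplication and no inner twists, for all but finitely many primes $p$ the image of $\overline{\rho_{f,p}}$ contains $\SL_2(\Om_{\id{p}}/\id{p})$, and moreover for all but finitely many $p$ the prime $p$ is unramified in $\Om_f$ (only finitely many primes ramify) and $p > k$. Discarding this finite set of bad primes, the hypotheses of Theorem B are satisfied for every remaining $p$ and every $n \ge 1$: either $p \nmid N$, or if $p \mid N$ we still need $f$ ordinary at $p$ — but there are only finitely many primes dividing $N$, so we may simply throw those finitely many into the excluded set as well. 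Thus for all but finitely many $p$ and all $n$, Theorem B applies and yields a newform $g \neq f$ of weight $k$ and some level $N'' q_1 \cdots q_r$ with $g \equiv f \pmod{p^n}$.

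The only subtlety is that Theorem B already asserts $g \neq f$, so strictly the Corollary is immediate once the hypotheses are arranged; the work is entirely in the bookkeeping of which primes to exclude. I would organize it as: (i) exclude primes $p \le k$; (ii) exclude the finitely many $p$ ramified in $\Om_f$; (iii) exclude the finitely many $p \mid N$; (iv) exclude the finitely many $p$ for which $\Img(\overline{\rho_{f,p}})$ fails to contain $\SL_2$ — finiteness here is exactly the content of the no-CM/no-inner-twist hypothesis, via the Momose--Ribet--Serre theory of the image of the residual representations attached to a non-CM form without extra twists. For each $p$ outside this finite union, and for each $n \ge 1$, Theorem B furnishes the desired $g$.

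The main (and essentially only) obstacle is justifying the uniform big-image statement: that for a newform without CM and without inner twists, the set of primes $p$ for which $\overline{\rho_{f,p}}$ does not have image containing $\SL_2(\Om_{\id{p}}/\id{p})$ is finite. This is a known theorem (it is the direct analogue for higher-weight forms of Serre's open-image theorem for non-CM elliptic curves, due to Ribet and Momose, and refined by Serre), so I would invoke it rather than reprove it, citing the relevant literature; the absence of inner twists is precisely what rules out the ``exceptional'' dihedral and projectively-small cases for all but finitely many $p$, and it also guarantees that the coefficient field $\Om_f$ itself is the right object to work over (no descent to a proper subfield is forced). With that input in hand, the Corollary follows formally from Theorem B, since the latter already builds in the inequality $g \neq f$ and the congruence $f \equiv g \pmod{p^n}$.
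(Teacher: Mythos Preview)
Your proposal is correct and follows essentially the same approach as the paper: exclude the finitely many primes $p \le k$, those ramified in $\Om_f$, those dividing $N$, and those for which the residual image fails to contain $\SL_2$ (invoking Ribet's big-image theorem, which uses the no-CM/no-inner-twist hypothesis), and then apply Theorem~B directly. The paper's proof is the same bookkeeping, citing Ribet (Theorem~3.1) for the big-image input.
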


\begin{proof}
  Since our form does not have complex multiplication or inner twists, by Ribet's
  result (\cite{Ribet}, Theorem 3.1) the image is big modulo $p$ for
  all but finitely many primes $p$. We avoid the primes without big
  image as well as those smaller than the weight. We also discard the
  primes $p$ that ramify in the field of coefficients of $f$ and 
  the ones in the level (or the non-ordinary
  ones), and we are in the hypothesis of the previous Theorem.
\end{proof}

The proof of Theorem A follows the ideas of \cite{ravi3}. This means
that it is divided into two parts. On the one hand we need to add
auxiliary primes that allow us to convert the problem of lifting a
global representation into the one of lifting many local ones. On the
other hand, we need to solve the local problems.  Following the
logical structure of \cite{ravi3}, we deal with the local
considerations first.

In this case, we essentially have to prove Proposition 1.6 of
\cite{ravi3} in our setting. For every prime $\ell \in P$ we need to
find a set $C_\ell$ of deformations of $\rho_n|_{G_\ell}$ to $W(\F)$
containing $\rho_\ell$ and a subspace $N_\ell \subseteq
\coho^1(G_\ell, \Ad)$ of certain dimension such that its elements
preserve the reductions of $N_\ell$, i.e. such that whenever $\rho_m$
is the reduction of some $\tilde{\rho} \in C_\ell$ modulo $p^m$ and $u
\in N_\ell$ then $(1+p^{m-1}u)\rho_m$ is the reduction of some other
$\tilde{\rho}'\in C_\ell$.  In order to get the full statement of our
Theorem A we also need all the deformations in $C_\ell$ to be
isomorphic when restricted to $I_\ell$.

Once we picked these local deformations classes, 
we need to construct two auxiliary sets of primes, $Q_1$ and
$Q_2$ (these are Ramakrishna's $Q$ and $T$) together with their respective 
sets $C_q$ and subspaces $N_q$ as for the primes in $P$, that satisfy the following
conditions:

\begin{itemize}
\item The set $Q_1$ morally has two main properties (see Fact 16
  \cite{ravi3}): it kills the global obstructions, i.e. is such that
  $\sha^1_{S\cup Q_1}((\Ad)^*)=0$ and therefore $\sha^2_{S\cup Q_1}(\Ad)=0$, and the inflation map
\[
\coho^2(G_S,\Ad)\to \coho^2(G_{S\cup Q_1},\Ad),
\]
is an isomorphism.
\item The set $Q_2$ gives an isomorphism
\[
\coho^1(G_{S\cup Q_1 \cup Q_2},\Ad) \to \bigoplus_{\ell \in S \cup Q_1 \cup Q_2} \coho^1(G_\ell,\Ad)/N_{\ell}.
\]

without adding global obstructions, i.e. $\sha^2_{S\cup Q_1 \cup Q_2} = 0$.
\end{itemize}
These auxilliary primes are essentially the same as in \cite{ravi3}, we use 
the same sets $C_q$ and subspace $N_q$. We only need to have a little 
extra care when proving that $\rho_n|_{G_q}$ is the reduction of some $\tilde{\rho} \in C_q$
for every $q \in Q_1 \cup Q_2$.

Once we have solved the local problems and found the auxiliary
primes, the inductive method starts to work. The key observation here
is that this inductive step only depends on hypotheses about the
reduction mod $p$ of our representation, which tells us that no matter
at which power of $p$ we start lifting, it will work perfectly.

The inductive argument works as follows: in virtue of the isomorphisms
between local and global second cohomology groups, a global
deformation to $W(\F)/p^m$ lifts to $W(\F)/p^{m+1}$ if and only if its
restrictions to the primes of $P\cup Q_1 \cup Q_2$ lift to
$W(\F)/p^{m+1}$. For $m = n$ the local condition is automatic so there
exists a lift $\rho_{n+1}$ of $\rho_n$ to $W(\F)/p^{n+1}$.
The problem is that $\rho_{n+1}$ may not lift again, as it can be
locally obstructed. In order to remove these local obstructions we use
the fact that any local deformation for primes $\ell \in P\cup Q_1
\cup Q_2$ can be modified by some element not in $N_\ell$ in order to
be a reduction of some element of $C_\ell$ and therefore
unobstructed. We will often refer to this as \emph{adjusting a local
deformation}. As we have an isomorphism between the global first
cohomology group and the local first cohomology groups modulo
$N_\ell$, we can find an element $u \in \coho^1(G_\Q, \Ad)$ that
adjusts $\rho_{n+1}$ locally for every prime in $P \cup Q_1 \cup Q_2$
making $(1+p^nu)\rho_{n+1}$ an unobstructed lift of $\rho_n$.  From
here we can repeat the process of lifting and adjusting indefinitely,
finally getting a lift to $W(\F)$.

Finally, to get Theorem A we need to prove modularity for the 
constructed representation, this follows from the appropriate modularity lifting
theorem, using the conditions we chose for the representation at $p$. 

Theorem B is an immediate consequence of Theorem A. The fact $f
\neq g$ will follow from the fact that both forms have different
levels, as the auxilliary primes involved necessarily ramify. If there are
no auxilliary primes, we add a ramified prime into the set $P$.

\medskip

\noindent{\bf Notations and conventions:} throughout this work we will
denote by $G_\Q$ the Galois group $\Gal(\overline{\Q}/\Q)$. If $\ell$ is a
prime, we denote by $G_{\ell}$ a decomposition group of $\ell$ inside
$G_\Q$. We will denote by $\FF$ a finite field of characteristic $p$ and by $W(\FF)$
its ring of Witt vectors.

By $\rho_n$ we will denote a continuous representation
\[
\rho_n:G_\Q \to \GL_2(W(\FF)/p^n).
\]
By $\trho$
we will always denote a continuous representation with coefficients in $W(\F)$ ramifying at 
finitely many primes and by $\rrho$ its reduction modulo $p$. If $\omega$ is a character from $G_\Q$ to
$\FF$, we denote by $\tilde{\omega}$ its Teichmuller lift.

We will denote by
$\chi$ the $p$-adic cyclotomic character. If $\det\rrho =
\omega\overline{\chi}^k$, with $\omega$ unramified at $p$, we will consider only
deformations with determinant $\tilde{\omega}\chi^k$. If $\rho$ is any continuous representation, we denote by $\Q(\rho)$ 
the field fixed by its kernel.

Given $\rrho$, after twisting it by a character of finite order we
may, and will, suppose that $\rrho$ and $\Ad$ ramify at the same set
of primes $S$. 

\medskip

\noindent {\bf Acknowledgments:} Special thanks go to Luis Dieulefait,
for proposing us the problem of Corollary \ref{coro:congruenceproblem}
(the starting point of the present article) as well as many
discussions and suggestions he made which improved the exposition, and
to Ravi Ramakrishna for many suggestions which not only improved the
exposition, but also allowed to remove some technical conditions in a
first version of the article. We also would like to thank Gabor Wiese
for many corrections and comments, and Panagiotis Tsaknias for
pointing out the application of Theorem A to
Corollary~\ref{coro:loweringthelevel}. Finally, we would like to thank
John Jones and Bill Allombert for helping us with the computational part of the example.

\section{Classification of residual representations and types of reduction}
Recall the classification of mod $p$ representations of $G_\ell$, when
$\ell \neq p$ (see  for example \cite{Diamond}, Section 2).
\begin{prop}
Let $\ell \neq 2$, be a prime number, with $\ell \neq p$. Then every
representation $\rho:G_\ell \rightarrow \GL_2(\F)$, up to twist by a
character of finite order, belongs to one of the following three
types:
\begin{itemize}
\item {\bf Principal Series:}
$\rho \simeq \left(\begin{smallmatrix}
\phi&0\\
0&1\\ 
\end{smallmatrix}\right) \hspace{0.25cm} \text{or} \hspace{0.25cm}
\rho \simeq \left(\begin{smallmatrix}
1&\psi\\
0&1\\
\end{smallmatrix}\right).$

\item {\bf Steinberg:} $\rho \simeq \left(\begin{smallmatrix}
\chi&\mu\\
0&1\\ 
\end{smallmatrix}\right),$ where $\mu \in \coho^1(G_\ell,\F(\chi))$ and 
$\mu|_{I_\ell} \neq 0$.
 
\item {\bf Induced:} $\rho \simeq \Ind_{G_M}^{G_\ell}(\xi)$, where
  $M/\Q_\ell$ is a quadratic extension and $\xi: G_M \rightarrow
  \F^\times$ is a character not equal to its conjugate under the
  action of $\Gal(M/\Q_\ell)$.
\end{itemize}
Here $\phi:G_\ell \to \F^\times$ is a multiplicative character and
$\psi:G_\ell \to \F$ is an unramified additive character.
\label{prop:Diamond}
\end{prop}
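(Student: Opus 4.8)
The plan is to run the standard local argument (compare \cite{Diamond}, Section~2); after possibly enlarging $\F$ we may and do assume that every character of $G_\ell$ that occurs below is $\F$-valued, so that in particular every matrix considered is triangularizable over $\F$. Write $I_\ell \trianglelefteq G_\ell$ for the inertia group and $P_\ell \trianglelefteq I_\ell$ for wild inertia, a pro-$\ell$ group, and recall that $G_\ell/P_\ell$ is topologically generated by a Frobenius lift $\sigma$ and a generator $\tau$ of tame inertia subject to the single relation $\sigma\tau\sigma^{-1}=\tau^\ell$.

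First I would restrict $\rho$ to wild inertia. Since $\ell\neq p$ the image $\rho(P_\ell)$ is a finite $\ell$-group, so by Maschke's theorem $\F^2$ is a semisimple $\F[\rho(P_\ell)]$-module; as $\ell$ is odd an $\ell$-group has no $2$-dimensional irreducible representation, hence $\rho|_{P_\ell}=\chi_1\oplus\chi_2$ for two characters $\chi_1,\chi_2$ of $P_\ell$. If $\chi_1\neq\chi_2$, the isotypic decomposition $\F^2=L_1\oplus L_2$ is canonical, so $\rho(G_\ell)$ (which normalizes $\rho(P_\ell)$) permutes $\{L_1,L_2\}$: if it fixes both lines then $\rho\simeq\phi_1\oplus\phi_2$ is reducible and, after twisting by $\phi_2^{-1}$, of Principal Series type $\left(\begin{smallmatrix}\phi&0\\0&1\end{smallmatrix}\right)$; if some element swaps them we obtain an index-$2$ subgroup $G_M$ with $M/\Q_\ell$ quadratic (unramified or tamely ramified, using $\ell\neq2$), and $\rho\simeq\Ind_{G_M}^{G_\ell}\xi$ where $\xi$ and its $\Gal(M/\Q_\ell)$-conjugate $\xi^c$ already differ on $P_\ell$, so $\xi\neq\xi^c$ and $\rho$ is of Induced type.

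It remains to treat the case $\chi_1=\chi_2=:\chi_0$, i.e. $P_\ell$ acting by the scalar $\chi_0$. Since $\chi_0$ has $\ell$-power order, $\det\rho|_{P_\ell}=\chi_0^2$, and squaring is bijective on the $\ell$-part of $\F^\times$, the character $\chi_0$ extends to a finite-order character $\psi$ of $G_\ell$ (a square root of the $\ell$-part of $\det\rho$); replacing $\rho$ by $\rho\otimes\psi^{-1}$ we may assume $\rho$ is trivial on $P_\ell$, hence factors through $G_\ell/P_\ell=\langle\sigma,\tau\rangle$. Now $\rho(\sigma)\rho(\tau)\rho(\sigma)^{-1}=\rho(\tau)^\ell$, so $\rho(\tau)$ and $\rho(\tau)^\ell$ are conjugate and the eigenvalues $a,b\in\F$ of $\rho(\tau)$ satisfy $\{a,b\}=\{a^\ell,b^\ell\}$; I split into three cases. \textbf{(i)} $a\neq b$: if $a^\ell=a$ and $b^\ell=b$ then $\rho(\sigma)$ preserves each eigenline of $\rho(\tau)$, so $\rho$ is diagonal $\Rightarrow$ Principal Series; if instead $a^\ell=b$ then $\rho(\sigma)$ swaps the two eigenlines, cutting out the unramified quadratic extension and exhibiting $\rho$ as $\Ind_{G_M}^{G_\ell}\xi$ with $\xi\neq\xi^c$ $\Rightarrow$ Induced. \textbf{(ii)} $\rho(\tau)$ is a scalar: a further unramified finite-order twist makes $\rho$ unramified, hence a representation of $\langle\sigma\rangle$; if $\rho(\sigma)$ is diagonalizable we get Principal Series $\left(\begin{smallmatrix}\phi&0\\0&1\end{smallmatrix}\right)$, and if $\rho(\sigma)$ is a single nontrivial Jordan block then, twisting its eigenvalue to $1$, we get Principal Series $\left(\begin{smallmatrix}1&\psi\\0&1\end{smallmatrix}\right)$ with $\psi$ unramified additive. \textbf{(iii)} $\rho(\tau)$ is non-semisimple (equivalently $p\mid\operatorname{ord}\rho(\tau)$): then $\rho(\tau)$ has a unique stable line, which is therefore also $\rho(\sigma)$-stable, so $\rho\simeq\left(\begin{smallmatrix}\alpha&\ast\\0&\beta\end{smallmatrix}\right)$ with $\alpha(\tau)=\beta(\tau)$ and $\ast|_{I_\ell}\neq0$; twisting so that $\alpha,\beta$ become unramified, the class $\ast|_{I_\ell}$ lies in $\coho^1(I_\ell,\F(\alpha\beta^{-1}))\cong\F$, and the fact that a nonzero such class must be $\langle\sigma\rangle$-invariant (inflation--restriction) forces $\alpha\beta^{-1}$ to be the unramified character $\sigma\mapsto\ell$, i.e. $\overline\chi|_{G_\ell}$; after a final unramified twist by $\beta^{-1}$ this gives $\rho\simeq\left(\begin{smallmatrix}\chi&\mu\\0&1\end{smallmatrix}\right)$ with $\mu\in\coho^1(G_\ell,\F(\chi))$, $\mu|_{I_\ell}\neq0$ $\Rightarrow$ Steinberg.

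Finally one checks that the cases are exhaustive and the types disjoint: a reducible non-split $\rho$ factoring through the tame quotient cannot have $\rho(\tau)$ with two distinct eigenvalues (the second eigenline of $\rho(\tau)$ would then be $\sigma$-stable and $\rho$ would split), so it falls under (ii) or (iii); Induced means absolutely irreducible, Steinberg means reducible non-split with cyclotomic subquotient ratio and ramified, and Principal Series is everything else. The step I expect to be the main obstacle is case (iii): obtaining the Steinberg normal form, in particular the cohomological computation $\coho^1(I_\ell,\F(\alpha\beta^{-1}))^{\langle\sigma\rangle}\neq0\Rightarrow\alpha\beta^{-1}=\overline\chi|_{G_\ell}$, together with the bookkeeping that keeps every twist of finite order; the hypothesis $\ell\neq2$ is used both to exclude $2$-dimensional irreducible representations of $\ell$-groups and to make the auxiliary quadratic extensions of $\Q_\ell$ tamely ramified.
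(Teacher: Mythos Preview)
Your argument is correct and is the standard proof of this classification. Note, however, that the paper does not actually prove this proposition: it is stated as a recalled fact with a reference to \cite{Diamond}, Section~2, and no argument is given. Your write-up therefore supplies what the paper omits, and the route you take (reduce to the tame quotient by twisting away the scalar action of wild inertia, then split on the Jordan type of $\rho(\tau)$ and use $\sigma\tau\sigma^{-1}=\tau^\ell$) is exactly the argument one finds in the cited source. One small remark: your opening move of enlarging $\F$ is harmless here because the proposition is about the \emph{type} of $\rho$, which is insensitive to scalar extension; but if you wanted the displayed normal forms literally over the original $\F$ you would need a short extra argument in the case where $\rho$ is irreducible over $\F$ yet splits over $\overline{\F}$.
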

\begin{remark}
  Any unramified representation is Principal Series, and can be of the form
  $\rho \simeq \left(\begin{smallmatrix}
      \phi&0\\
      0&1\\
\end{smallmatrix}\right)$, with $\phi$ unramified or of the form $\rho \simeq \left(\begin{smallmatrix}
  1&\psi\\
  0&1\\
\end{smallmatrix}\right)$, with $\psi: G_\ell \to {\FF}$ an
additive unramified character.

\end{remark}
The same classification applies for representations $\tilde{\rho}:
G_\ell \rightarrow \GL_2(\overline{\Q_p})$, but since we need to study
reductions modulo powers of a prime, we need to look at
representations with integer coefficients modulo
$\GL_2(\overline{\Z_p})$ equivalence. Let $L$ be the coefficient field
of $\tilde{\rho}$, $\Om_L$ its ring of integers, and $\pi$ be a local
uniformizer. Also let $\mu \in \coho^1(G_\ell,\Z_p(\chi))$ be a
generator of such $\Z_p$-module.

\begin{prop}
  Let $\tilde{\rho}:G_\ell \rightarrow \GL_2(\overline{\Z_p})$ be a
  continuous representation. Then up to twist (by a finite order
  character times powers of the cyclotomic one) and
  $\GL_2(\overline{\Z_p})$ equivalence we have:
  \begin{itemize}
  \item {\bf Principal Series:} $\tilde{\rho} \simeq
    \left(\begin{smallmatrix}
        \phi & \pi^n(\phi-1)\\
        0 & 1\end{smallmatrix} \right)$, with $n \in \Z_{\leq 0}$ satisfying
    $\pi^{n}(\phi-1) \in \overline{\Z_p}$ or $\tilde{\rho} \simeq
    \left(\begin{smallmatrix}
        1&\psi\\
        0&1\\
\end{smallmatrix}\right)$.

\item {\bf Steinberg:} $\tilde{\rho} \simeq \left(\begin{smallmatrix}
      \chi & \pi^n\mu \\
      0 & 1\end{smallmatrix} \right)$, with $n \in \Z_{\geq 0}$.

\item {\bf Induced:} There exists a quadratic extension $M/\Q_\ell$
  and a character $\xi: G_M \rightarrow \overline{\Z_p}^\times$ not
  equal to its conjugate under the action of $\Gal(M/\Q_\ell)$ such
  that $\tilde{\rho} \simeq \<v_1,v_2>_{\O_L}$, where for $\sigma$ a
  generator of $\Gal(M/\Q_p)$ and $\tau \in G_M$, the action is given by
\[
  \tau(v_1)= \xi(\tau)v_1,\quad \tau(v_2)= \xi^\sigma(\tau)v_2, \quad  \sigma(v_1)= v_2 \quad \text{and} \quad
  \sigma(v_2)=\xi(\sigma^2)v_1,
\]
or
\[
  \tilde{\rho}(\tau)=\begin{pmatrix}
               \xi(\tau)&\frac{\xi(\tau)-\xi^\sigma(\tau)}{\pi^{n}}\\
               0&\xi^\sigma(\tau)
              \end{pmatrix}   \qquad \text{ and } \qquad \tilde{\rho}(\sigma)=\begin{pmatrix}
               -a&\frac{\xi(\sigma^2)-a^2}{\pi^n}\\
               \pi^n&a
              \end{pmatrix}  
\]

where $\xi^\sigma$ is the character of $G_M$ defined by $\xi^\sigma(g) = \xi(\sigma g\sigma^{-1})$ and $a \in \Om_L^\times$.

  \end{itemize}
\end{prop}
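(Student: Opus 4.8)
The plan is to mirror the proof of the mod-$p$ classification (Proposition~\ref{prop:Diamond}), handling each of the three cases by reducing to a normal form and then tracking how the entries scale when we pass from $\overline{\Q_p}$-coefficients to $\overline{\Z_p}$-coefficients up to $\GL_2(\overline{\Z_p})$-conjugacy. First I would fix a model of $\tilde{\rho}$ over $\Om_L$ (possible after conjugation since $G_\ell$ is compact, so any representation into $\GL_2(\overline{\Q_p})$ stabilizes a lattice), and then run the usual dichotomy: either $\tilde{\rho}$ restricted to the wild inertia subgroup is reducible in a way that forces an induced shape, or $\tilde{\rho}\otimes\overline{\Q_p}$ is itself reducible. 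In the reducible case, after twisting by a finite-order character times a power of $\chi$ we may assume one of the two diagonal characters is trivial, landing us in the Principal Series or Steinberg alternatives according to whether the other diagonal character is $\neq\chi$ or $=\chi$; the Induced case is the irreducible one.

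For the \textbf{Principal Series} case I would write $\tilde\rho\simeq\left(\begin{smallmatrix}\phi&c\\0&1\end{smallmatrix}\right)$ with $c$ a cocycle in $\coho^1(G_\ell,L(\phi))$. If $\phi\neq 1$ this group is $1$-dimensional and is spanned by the coboundary of $\left(\begin{smallmatrix}1&1\\0&1\end{smallmatrix}\right)$, i.e.\ by $\phi-1$; so $c=\lambda(\phi-1)$ for some $\lambda\in L$, and conjugating by $\mathrm{diag}(\pi^m,1)$ scales $\lambda$ by $\pi^m$. Choosing $m$ so that $\pi^m\lambda$ has valuation as small as possible while staying in $\Om_L$ (i.e.\ normalizing $n\le 0$ minimal with $\pi^n(\phi-1)$ integral), we get exactly the stated form $\left(\begin{smallmatrix}\phi&\pi^n(\phi-1)\\0&1\end{smallmatrix}\right)$. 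If $\phi=1$, $c$ is an honest additive homomorphism $G_\ell\to L$, which (after conjugating to make it integral and nonzero, or zero) gives the $\left(\begin{smallmatrix}1&\psi\\0&1\end{smallmatrix}\right)$ shape with $\psi$ unramified up to twist. The \textbf{Steinberg} case is the same computation with $\phi=\chi$: now $\coho^1(G_\ell,\Z_p(\chi))$ is free of rank one over $\Z_p$ with generator $\mu$, the extension does not split over $\overline\Z_p$ (it doesn't even split over $\overline{\Q_p}$ in the Steinberg case), $c=\pi^n\mu$ for a unique $n\ge 0$, and the conjugation action again just rescales $\pi^n$.

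For the \textbf{Induced} case I would take $M/\Q_\ell$ quadratic, $\xi\colon G_M\to\overline{\Z_p}^\times$ with $\xi\neq\xi^\sigma$, and start from the obvious lattice $\Om_L v_1\oplus\Om_L v_2$ on which $G_M$ acts diagonally by $\xi,\xi^\sigma$ and $\sigma$ swaps the lines (with $\sigma^2$ acting by the scalar $\xi(\sigma^2)$): that gives the first displayed model. To get the upper-triangular-over-$G_M$ model one changes basis to the eigenbasis of the torus inside $G_M$, say $w_1=v_1+v_2$, $w_2=\pi^{-n}(v_1-v_2)$ — the power $\pi^n$ is precisely the index needed to keep the $\sigma$-action integral, since $\sigma$ no longer preserves the naive lattice — and then one reads off $\tilde\rho(\tau)$ on $\{w_1,w_2\}$ as $\left(\begin{smallmatrix}\xi(\tau)&(\xi(\tau)-\xi^\sigma(\tau))/\pi^n\\0&\xi^\sigma(\tau)\end{smallmatrix}\right)$ and $\tilde\rho(\sigma)$ as $\left(\begin{smallmatrix}-a&(\xi(\sigma^2)-a^2)/\pi^n\\\pi^n&a\end{smallmatrix}\right)$, where $a\in\Om_L^\times$ records the off-diagonal freedom in choosing the splitting and $\det\tilde\rho(\sigma)=-\xi(\sigma^2)$ forces the $(1,2)$-entry; checking $\tilde\rho(\sigma)^2=\xi(\sigma^2)\,\mathrm{Id}$ and $\tilde\rho(\sigma)\tilde\rho(\tau)\tilde\rho(\sigma)^{-1}=\tilde\rho(\sigma\tau\sigma^{-1})$ is the routine verification.

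The main obstacle I anticipate is not any single case but the bookkeeping of what ``up to twist and $\GL_2(\overline{\Z_p})$-equivalence'' buys us: one must check that the integer $n$ is genuinely an invariant (equivalently, that no further integral conjugation can shrink the denominator) and that twisting by a power of $\chi$ in the Steinberg/Principal-Series cases is exactly what is needed to normalize the diagonal — in particular that in the Steinberg case the cohomology class cannot be killed by an integral change of lattice, which is where the non-splitting of the Steinberg extension (inherited from the residual picture, $\mu|_{I_\ell}\neq 0$) is used. The Induced case's only subtlety is verifying that the lattice $\Om_L w_1\oplus\Om_L w_2$ is $G_\ell$-stable for the claimed $n$ and not for any larger power, which is a direct valuation computation with $\xi(\sigma^2)$ and $a$.
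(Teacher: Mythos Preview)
Your treatment of the reducible (Principal Series and Steinberg) cases is essentially the paper's: pick an eigenvector in the lattice, write the representation upper-triangular, identify the off-diagonal entry as a scalar multiple of $\phi-1$ (resp.\ of the generator $\mu$), and normalize the power of $\pi$ by diagonal conjugation. That part is fine.

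The Induced case, however, has a genuine error. Your proposed change of basis $w_1=v_1+v_2$, $w_2=\pi^{-n}(v_1-v_2)$ does \emph{not} produce the claimed matrices: $v_1,v_2$ are already the $G_M$-eigenbasis (that is exactly what ``$G_M$ acts diagonally by $\xi,\xi^\sigma$'' means), so $w_1=v_1+v_2$ is \emph{not} a $\tau$-eigenvector, and the matrix of $\tau$ in $\{w_1,w_2\}$ has $(1,1)$-entry $\tfrac{1}{2}(\xi(\tau)+\xi^\sigma(\tau))$, not $\xi(\tau)$. Your remark that ``$\sigma$ no longer preserves the naive lattice'' is also wrong: $\sigma(v_1)=v_2$ and $\sigma(v_2)=\xi(\sigma^2)v_1$ with $\xi(\sigma^2)\in\O_L^\times$, so $\langle v_1,v_2\rangle_{\O_L}$ is $\sigma$-stable.

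More fundamentally, the logic is inverted. The task is not to exhibit two particular lattices but to show that \emph{every} $G_\ell$-stable $\O_L$-lattice is equivalent to one of the two displayed forms. The paper does this by taking an arbitrary stable lattice $T$, rescaling so that $v_1\in T$ is primitive, and observing that then $v_2=\sigma(v_1)\in T$ and (by the unit computation above) $v_2$ is also primitive, so $\langle v_1,v_2\rangle_{\O_L}\subseteq T$. If equality holds you are in the first form; otherwise you complete $v_1$ to an $\O_L$-basis of $T$ by some $w=\pi^{-n}(-av_1+v_2)$ with $n>0$, $a\in\O_L^\times$, and compute the matrices of $\tau$ and $\sigma$ in the basis $\{v_1,w\}$. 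The key point is that $v_1$ \emph{remains} a basis vector, so the $G_M$-action is automatically upper-triangular; the parameter $a$ is not ``off-diagonal freedom in choosing the splitting'' but is determined by $T$. Your Induced argument needs to be redone along these lines.
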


\begin{proof}

  We first consider the case where $\trho$ is irreducible over $\overline{\Q_p}$. 
  In this case the representation is induced, and in the
  coefficient field $L$, the canonical basis is $\{v_1,v_2\}$, where $v_2 =
  \sigma(v_1)$ for $\sigma$ a generator of $\Gal(M/\Q_\ell)$.  Let $T$
  be an invariant lattice for $\tilde{\rho}$. There exists a least $n
  \in \Z$ such that $w_1 = \pi^n v_1\in T$. Rescaling $T$ we can
  assume that $n=0$ (rescaling the lattice does not affect the
  representation). Since $\sigma(T) \subseteq T$, $v_2 = \sigma(v_1)
  \in T$. Since $\sigma(v_2) = \xi(\sigma^2)v_1$, with $\xi(\sigma^2)
  \in \O_L^\times$, $0$ is also the least integer such that $\pi^n v_2
  \in T$, and therefore $\langle v_1, v_2 \rangle_{\O_L} \subseteq
  T$. If this inclusion is an equality we are in the first case of our
  classification.
  
  Otherwise, we can extend $v_1$ to a basis of $T$ by adding a vector $w \in T$ such that 
  $w \notin \langle v_1, v_2 \rangle_{\O_L}$. We can write this element as $w = \alpha v_1 + \beta v_2$. Notice that 
  necessarily $v_\pi(\alpha) = v_\pi(\beta) < 0$. Changing $v_1$ and $v_2$ by a unit we can assume that $w = \pi^{-n}(-av_1+v_2)$, with $n<0$.
  Using $\sigma(v_1) = v_2$ and $\sigma(v_2) = \xi(\sigma^2)v_1$ we can compute the matrix of $\sigma$ in the basis $v_1,w$ and we get
  
  \[ \tilde{\rho}(\sigma) = \begin{pmatrix}
               -a&\pi^{-n}(\xi(\sigma^2)-a^2)\\
               \pi^n&a
              \end{pmatrix}.
  \]
  The action of inertia follows from a similar computation.


  On the other hand, ff $\tilde{\rho}$ is reducible over $\overline{\Q_p}$, 
  we can chose an eigenvector inside our lattice,
  and extend it to a basis so that our representation is of the form
  (up to twist)
\[
\tilde{\rho} \simeq \left( \begin{smallmatrix} \phi & * \\ 0 & 1\end{smallmatrix}\right).
\]
If $\phi$ is trivial, then $*$ is an additive character, and we are in
the first case. Otherwise, if $\tilde{\rho}$ is principal series, it
is equivalent (modulo $\GL_2(L)$) to $\left( \begin{smallmatrix}
    \phi & 0 \\ 0 & 1\end{smallmatrix} \right)$, hence is of the form
$\left( \begin{smallmatrix} \phi & u(\phi-1)\\ 0 & 1\end{smallmatrix}
\right)$. Since we want our representation to have integral
coefficients we get the stated result. Finally, in the Steinberg case,
our representation is $\GL_2(L)$-equivalent to
$\left(\begin{smallmatrix} \chi & \mu \\ 0 & 1\end{smallmatrix}
\right)$, but an easy computation shows that such a representation is
of the desired form as well.
\end{proof}

\begin{remark}
 In the Principal Series case, if we put $n = 0$ we get $\tilde{\rho} \simeq \left(\begin{smallmatrix}
                                                                                    \phi&\phi-1\\0&1
                                                                                   \end{smallmatrix}
\right)$, which is equivalent to $\left(\begin{smallmatrix}
                                                                                    \phi&0\\0&1
                                                                                   \end{smallmatrix}
\right)$, we will make repeated use of this last representative for this class.
\end{remark}

Now we want to study the possible reductions from types of $\GL_2(\overline{\Z_p})$-equivalent
representations to types of representations with coefficients in $\GL_2(\overline{\F_p})$. Although this is well known to
experts, and most of the claims are in \cite{carayol}, the change of
types are not explicitly described in that article, so we just give a
short self contained description.


Recall the condition for a character to lose ramification:

\begin{lemma}
  Let $\xi: G_\ell \rightarrow \overline{\Q_p}^\times$ a character and
  $\overline{\xi}$ its mod $p$ reduction. If $\Ker(\xi|_{I_\ell})
  \subsetneq \Ker(\overline{\xi}|_{I_\ell})$ then $\ell \equiv 1
  \hspace{0.1cm}(mod \hspace{0.1cm} p)$.
\label{lemma:1}
\end{lemma}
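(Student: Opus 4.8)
The plan is to analyze $\xi|_{I_\ell}$ through the structure of its image in $\overline{\Z_p}^\times$ together with the Frobenius action on tame inertia; the only real input is that a non-trivial pro-$p$ contribution to this image can come only from the tame part, whose exponent is controlled by $\ell-1$.

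First I would record the reduction step. Since $I_\ell$ is compact and $\xi$ is continuous, $\xi(I_\ell)\subseteq\overline{\Z_p}^\times$, and $\overline{\Z_p}^\times=\mu\times U$ where $\mu$ is the group of roots of unity of order prime to $p$ and $U=1+\mathfrak{m}$ is the (pro-$p$) group of $1$-units; reduction modulo $p$ restricts to an isomorphism $\mu\to\overline{\F_p}^\times$ and has kernel $U$. Writing $\xi|_{I_\ell}=\eta\cdot\psi$ with $\eta$ valued in $\mu$ and $\psi$ valued in $U$ (a canonical splitting, as there are no non-trivial homomorphisms between a prime-to-$p$ torsion group and a pro-$p$ group), we get $\Ker(\overline{\xi}|_{I_\ell})=\Ker(\eta)$ and $\Ker(\xi|_{I_\ell})=\Ker(\eta)\cap\Ker(\psi)$. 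Hence the hypothesis $\Ker(\xi|_{I_\ell})\subsetneq\Ker(\overline{\xi}|_{I_\ell})$ forces $\psi\neq 1$.

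Next I would bound $\psi$. Since $\psi$ takes values in the pro-$p$ group $U$ and the wild inertia subgroup $P_\ell$ is pro-$\ell$ with $\ell\neq p$, we have $\psi|_{P_\ell}=1$, so $\psi$ factors through the tame quotient $I_\ell/P_\ell$. Choosing a lift $\phi\in G_\ell$ of Frobenius, conjugation by $\phi$ preserves $I_\ell$ and $P_\ell$ and acts on $I_\ell/P_\ell$ as the $\ell$-power map; on the other hand $\xi(\phi\tau\phi^{-1})=\xi(\tau)$ for all $\tau\in I_\ell$ because the target is abelian, and projecting this identity onto the $U$-component gives $\psi(\tau)=\psi(\phi\tau\phi^{-1})=\psi(\tau)^\ell$. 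Thus $\psi(I_\ell)$ is a non-trivial subgroup of $U$ annihilated by $\ell-1$; being a non-trivial finite subgroup of the pro-$p$ group $U$ its order is a positive power of $p$, and since this order divides $\ell-1$ we conclude $p\mid\ell-1$, i.e.\ $\ell\equiv 1\pmod p$.

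The argument is short, and the only delicate point is bookkeeping: one must check that the decomposition $\overline{\Z_p}^\times=\mu\times U$ is compatible with reduction and that projection onto the $U$-component is a homomorphism, so that it can be applied to the Frobenius-invariance identity, and one must use that a finite subgroup of a pro-$p$ group is a $p$-group. Equivalently, the whole statement can be phrased through local class field theory, identifying $\xi|_{I_\ell}$ with a character of $\Z_\ell^\times\cong\Z/(\ell-1)\Z\times\Z_\ell$: the pro-$\ell$ factor lands in $\mu$, so any non-trivial $p$-part of the image is forced onto the cyclic factor of order $\ell-1$.
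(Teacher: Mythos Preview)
Your proof is correct and matches the paper's intent: the paper actually omits the proof, recording only (in the Remark immediately after) the key consequence that any $g\in I_\ell$ with $\xi(g)\neq 1$ and $\overline{\xi}(g)=1$ satisfies $\xi(g)^{\ell-1}=1$, which is exactly what your Frobenius-conjugation argument on the tame quotient produces. The one cosmetic point is that calling $1+\mathfrak{m}\subset\overline{\Z_p}^\times$ a pro-$p$ group is slightly loose in the non-complete setting, but since a continuous $\xi$ may be taken with values in $\O_L^\times$ for some finite $L/\Q_p$ (where $1+\mathfrak{m}_L$ genuinely is pro-$p$), this is harmless.
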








\begin{remark}
 Whenever an element $g\in I_\ell$ satisfies that $\xi(g) \neq 1$ and $\overline{\xi}(g) = 1$ we necessarily have $\xi(g)^{\ell-1} = 1$.
\end{remark}

\begin{prop}
Let $\trho$ be as above, then we have the following types of reduction:
\begin{itemize}
\item If $\trho$ is Principal Series, then $\rrho$ is Principal Series or Steinberg, and the latter occurs only when $\ell \equiv 1 \pmod p$.
\item If $\trho$ is Steinberg, then $\rrho$ is Steinberg or Principal Series, and the latter occurs only when $\rrho$ is unramified.
\item If $\trho$ is Induced, then $\rrho$ is Induced, Steinberg or an
  unramified Principal Series. For the last two cases we must have
  $\ell \equiv -1 \pmod p$.
\end{itemize}
\label{prop:localtypesreductions}
\end{prop}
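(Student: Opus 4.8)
The strategy is to run through the three cases of the previous Proposition (the classification of integral $\overline{\Z_p}$-representations of $G_\ell$) and, for each normal form, compute the reduction modulo $\pi$ explicitly, keeping track of when the reduced representation can change type. Throughout I will use Lemma~\ref{lemma:1}: a character of $G_\ell$ can only lose inertial ramification upon reduction when $\ell \equiv 1 \pmod p$; the conjugate-pair situation in the induced case will similarly force $\ell \equiv -1 \pmod p$.

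\emph{Principal Series case.} Start from $\trho \simeq \left(\begin{smallmatrix}\phi & \pi^n(\phi-1)\\ 0 & 1\end{smallmatrix}\right)$ with $n \le 0$, or the additive form $\left(\begin{smallmatrix}1 & \psi\\0&1\end{smallmatrix}\right)$. If $n = 0$ the representation is diagonalizable over $\Om_L$, so the reduction is again (split) Principal Series. If $n < 0$, then $\pi^n(\phi-1)$ is a unit multiple of something integral, so $\overline{\phi} - 1$ must be divisible by $\pi^{-n}$, i.e. $\overline{\phi} = 1$ on the relevant subgroup; here the reduction becomes $\left(\begin{smallmatrix}1 & \overline{\pi^n(\phi-1)}\\0&1\end{smallmatrix}\right)$ which is nontrivial, hence Steinberg precisely when the off-diagonal entry is nonzero on $I_\ell$. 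By Lemma~\ref{lemma:1} (applied to $\phi$) the condition $\overline{\phi}|_{I_\ell}$ trivial while $\phi|_{I_\ell}$ is not forces $\ell \equiv 1 \pmod p$, which gives the stated restriction. The additive sub-case reduces to an additive character, staying Principal Series.

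\emph{Steinberg case.} Here $\trho \simeq \left(\begin{smallmatrix}\chi & \pi^n\mu\\0&1\end{smallmatrix}\right)$ with $n \ge 0$. The diagonal character $\chi$ reduces to $\overline{\chi}$, the mod-$p$ cyclotomic character, which is unramified at $\ell \ne p$ anyway, so the issue is only the extension class $\pi^n\mu$. If $n = 0$, the reduction $\left(\begin{smallmatrix}\overline{\chi} & \overline{\mu}\\0&1\end{smallmatrix}\right)$ still has $\overline{\mu}|_{I_\ell} \ne 0$ (since $\mu$ generates $\coho^1(G_\ell,\Z_p(\chi))$ and reduction of a generator stays nonzero on inertia unless $\ell\equiv 1$, but one checks directly in the Steinberg normal form that it persists), so it stays Steinberg. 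If $n > 0$, the extension class dies modulo $\pi$ and the reduction is $\left(\begin{smallmatrix}\overline{\chi}&0\\0&1\end{smallmatrix}\right)$, which is unramified (as $\overline{\chi}$ is unramified at $\ell$), hence Principal Series of the unramified kind; this is exactly the ``only when $\rrho$ is unramified'' clause.

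\emph{Induced case.} Using the normal forms from the previous Proposition, $\trho$ is built from $\xi\colon G_M \to \overline{\Z_p}^\times$ with $\xi \ne \xi^\sigma$. The reduction $\overline{\trho}$ is $\Ind_{G_M}^{G_\ell}(\overline{\xi})$, which is again irreducible and Induced unless $\overline{\xi} = \overline{\xi}^\sigma$. If $\overline{\xi} = \overline{\xi}^\sigma$, then $\overline{\trho}$ becomes reducible; one then examines whether $\overline{\xi}$ remains ramified. The point is that $\xi \ne \xi^\sigma$ but $\overline{\xi} = \overline{\xi}^\sigma$ means the character $\xi/\xi^\sigma$ is nontrivial but reduces to the trivial character; since $\xi/\xi^\sigma$ restricted to $I_M$ has order dividing something controlled by $M/\Q_\ell$, Lemma~\ref{lemma:1} applied over $M$ (or a direct order computation using that $\sigma$ acts on the residue field) forces the residue characteristic condition $\ell \equiv -1 \pmod p$ — the sign $-1$ coming from the fact that $M/\Q_\ell$ is the unramified quadratic extension in the relevant case, whose residue field has order $\ell^2$ and whose norm group picks up the factor $\ell+1$. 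In this situation the reduction is either an unramified Principal Series (when $\overline{\xi}$ is itself unramified) or Steinberg (when the induced reducible representation is non-split with nonzero inertial extension class), and by the second remark after Lemma~\ref{lemma:1} the ramified part, if any, is of Steinberg type. Working through the explicit matrix form $\tilde{\rho}(\sigma)=\left(\begin{smallmatrix}-a & \pi^{-n}(\xi(\sigma^2)-a^2)\\ \pi^n & a\end{smallmatrix}\right)$ shows precisely which of the two outcomes occurs depending on the valuation of $\xi(\sigma^2)-a^2$.

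\textbf{Main obstacle.} The routine part is the Principal Series and Steinberg cases, which are straightforward matrix reductions combined with Lemma~\ref{lemma:1}. The delicate point is the Induced case: one must carefully justify that the \emph{only} way an irreducible induced representation degenerates is via $\overline{\xi}=\overline{\xi}^\sigma$, correctly identify which quadratic extension $M$ is forced (ramified vs.\ unramified) in that degeneration, and from that extract the sharp congruence $\ell \equiv -1 \pmod p$ rather than merely $\ell \equiv \pm 1$. This requires a clean analysis of the order of $\xi/\xi^\sigma$ on inertia together with the action of $\Gal(M/\Q_\ell)$ on tame inertia, which is where I would spend the bulk of the argument.
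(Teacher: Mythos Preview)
Your Principal Series and Steinberg cases are essentially the paper's argument (the paper phrases things via semisimplifications rather than tracking the explicit normal forms, but the content is the same and both routes are short).

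The genuine gap is in the Induced case, and you have in fact located it yourself in your ``Main obstacle'' paragraph---but you have not closed it. Two specific points are missing:

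\emph{Ruling out ramified $M/\Q_\ell$.} When $\overline{\xi}=\overline{\xi}^\sigma$ but $\xi\neq\xi^\sigma$, you must exclude the possibility that $M/\Q_\ell$ is the \emph{ramified} quadratic extension. Your proposed tool---Lemma~\ref{lemma:1} applied to $\xi/\xi^\sigma$ over $M$---does not do this: if $M/\Q_\ell$ is ramified its residue field is still $\F_\ell$, so the lemma would only yield $\ell\equiv 1\pmod p$, not a contradiction. The paper disposes of this case by a separate, nontrivial lemma (stated and proved immediately after the proposition): using local class field theory and a Hensel square-root argument in $1+\pi\O_L$, it shows that for ramified $M/\Q_\ell$, if $\overline{\xi}$ extends to $G_\ell$ then $\xi$ already extends, contradicting the Induced hypothesis. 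This lemma is the real work in the proof, and nothing in your outline substitutes for it.

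\emph{Sharpening $\ell\equiv\pm 1$ to $\ell\equiv -1$ in the unramified case.} Even granting $M/\Q_\ell$ unramified, your residue-field argument only gives $\ell^2\equiv 1\pmod p$. The paper eliminates $\ell\equiv 1$ in two ways depending on the sub-case: if $\rrho$ is Steinberg, one reads off $\epsilon=\overline{\chi}$ on Frobenius, so $-1\equiv\ell$; if $\rrho$ is unramified, one checks directly that $\ell\equiv 1\pmod p$ would force $\xi$ to extend to $G_\ell$. You assert the sharp congruence but supply neither argument.

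So the strategy is right, but the Induced case does not go through without the ramified-$M$ lemma (or an equivalent), and that lemma is not routine.
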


\begin{proof}
  If $\trho$ is reducible, its reduction cannot be irreducible, which
  already excludes the case of a Principal Series or a Steinberg reducing to an Induced one. 
  Besides this trivial observation, we study each case in detail:

  \begin{itemize}
  \item \textsl{$\trho$ Principal Series:} in this case $\trho \simeq \left(\begin{smallmatrix}
\phi&\lambda(\phi -1)\\
0&1\\
\end{smallmatrix}\right)$ or $\left(\begin{smallmatrix}
1&\phi\\
0&1\\
\end{smallmatrix}\right)$.
If  $\trho \simeq \left(\begin{smallmatrix}
\phi&\lambda(\phi -1)\\
0&1\\
\end{smallmatrix}\right)$, the uniqueness of the semisimplification of the
reduction implies that $\rrho^{ss}
\simeq\left(\begin{smallmatrix}
    \overline{\phi}&0\\
    0&1
\end{smallmatrix}\right)$. If the reduction is of Steinberg type we need to have $\overline{\phi}
= \chi$, so a
character is losing ramification and this implies (by
Lemma~\ref{lemma:1}) that $\ell \equiv 1 \pmod{p}$.

If  $\trho \simeq \left(\begin{smallmatrix}
1&\phi\\
0&1\\
\end{smallmatrix}\right)$ then it is unramified and so is its reduction, implying that it can only be
Principal Series.

\item \textsl{$\trho$ Steinberg:} in this case $\trho
  \simeq \left(\begin{smallmatrix}
    \chi&\lambda u\\
    0&1\\
\end{smallmatrix}\right)$ where $u \in \coho^1(G_\ell, \Z_p(\chi))$ is the generator of the group.
Its semisimplification is
$\left(\begin{smallmatrix}
    \chi&0\\
    0&1
  \end{smallmatrix}\right)$, which implies that if $\rrho$ is Principal Series then it
is unramified.

\item \textsl{$\trho$ Induced:} in this case $\rho =
  \Ind_{G_M}^{G_{\Q_\ell}}(\xi)$, where $M/\Q_\ell$ is a quadratic
  extension and $\xi$ is a character of $G_M$ that does not descend to
  $G_{\Q_\ell}$. If the character $\overline{\xi}$ does not descend,
  then $\overline{\rho}$ is also irreducible hence Induced.

Now suppose that $\overline{\xi}$ does descend and, for a moment, that $\rrho$ ramifies (which implies, by assumption, that $\Ad$ ramifies). 
In this case the type of $\rho$ changes when reducing. The semisimplification of the reduction
we are considering is therefore $$\overline{\rho}^{ss}
\simeq \begin{pmatrix} \overline{\xi}\epsilon&0\\ 0&\overline{\xi}
\end{pmatrix} = \overline{\xi} \otimes \begin{pmatrix}
\epsilon&0\\
0&1
\end{pmatrix},$$
where $\epsilon$ is the quadratic character associated to $M/\Q_\ell$. 

Now, if $\overline{\rho}$ is Principal Series, then $\epsilon$ has to
be ramified, as we are assuming that $\Ad$ is ramified at $\ell$, so
$M/\Q_\ell$ is ramified. We claim (and will prove in the next
Lemma) that this case cannot happen, i.e. if $M/\Q_\ell$ is
ramified, any character $\xi:G_M \rightarrow \Z_p^\times$ that does
not extend to $G_\Q$ then its reduction does not extend to $G_\Q$ either.
Then the only possibility left to study is when $\overline{\rho}$ is
Steinberg. Observe that if this is the case, looking at the
semisimplifications we see that $\epsilon = \chi$, which only happens
when $M/\Q_\ell$ is unramified and $\ell = -1 \pmod p$. This finishes the case where $\rrho$ is ramified.

If $\rrho$ is unramified then $\epsilon$ has to be unramified as well, hence $M/\Q_\ell$ is an unramified extension. In this case, using the same
argument as in Lemma \ref{lemma:1}, we conclude that $\ell^2 \equiv 1 \pmod{p}$. It is easy to prove that if 
$\ell \equiv 1 \pmod{p}$ then the character $\xi$
extends to $G_\ell$, therefore we necessarily have $\ell \equiv -1 \pmod{p}$.
\end{itemize} 
\end{proof}

\begin{lemma}
  Let $M/\Q_\ell$ be a quadratic ramified extension and $\xi: G_M
  \rightarrow \overline{\Z_p}^\times$ a character and $\overline{\xi}$ its
  reduction. If $\overline{\xi}$ extends to $G_\ell$ then $\xi$ does as well.
\end{lemma}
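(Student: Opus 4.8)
The plan is to reduce the extension problem to $\sigma$-invariance and then control the resulting defect with local class field theory. Write $\sigma$ for the nontrivial element of $\Gal(M/\Q_\ell)$ and, for a character $\xi$ of $G_M$, let $\xi^\sigma$ denote the conjugate character $g\mapsto \xi(\tilde\sigma g\tilde\sigma^{-1})$, which is well defined (independent of the lift $\tilde\sigma\in G_\ell$) since characters of $G_M$ kill commutators. Because $\Gal(M/\Q_\ell)$ is cyclic of order $2$ and the coefficient groups $\overline{\F}_p^\times$ and $\overline{\Z}_p^\times$ are $2$-divisible ($p$ is odd), a character of $G_M$ extends to $G_\ell$ if and only if it is $\sigma$-invariant: one direction is immediate, and for the other one extends a $\sigma$-invariant character by choosing a square root of its value on $\tilde\sigma^2\in G_M$, which exists by divisibility. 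Hence it suffices to prove: if $\overline\xi=\overline\xi{}^{\,\sigma}$, then $\xi=\xi^\sigma$.

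To this end I would introduce $\eta:=\xi\cdot(\xi^\sigma)^{-1}$, a character of $G_M$ with two properties. First, $\overline\eta=\overline\xi\cdot(\overline\xi{}^{\,\sigma})^{-1}=1$ by hypothesis, so $\eta$ takes values in $1+\mathfrak m_L$, where $L/\Q_p$ is a finite extension containing the values of $\xi$ and $\mathfrak m_L$ is the maximal ideal of $\Om_L$. Second, $\eta$ is \emph{anti-invariant}: since $\tilde\sigma^2\in G_M$ and $\xi$ is abelian-valued one checks $(\xi^\sigma)^\sigma=\xi$, whence $\eta^\sigma=\xi^\sigma\cdot\xi^{-1}=\eta^{-1}$.

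Now I would pass to $M^\times$ via the reciprocity map $\mathrm{rec}_M\colon M^\times\to G_M^{\mathrm{ab}}$, whose formation is compatible with the Galois action (conjugation by $\sigma$ on $G_M^{\mathrm{ab}}$ corresponds to the natural action of $\sigma$ on $M^\times$). Setting $\psi:=\eta\circ\mathrm{rec}_M\colon M^\times\to 1+\mathfrak m_L$, anti-invariance reads $\psi(\sigma x)=\psi(x)^{-1}$, so $\psi\big(N_{M/\Q_\ell}(x)\big)=\psi(x\,\sigma x)=1$ for every $x\in M^\times$. Thus $\psi$ is trivial on the subgroup $N_{M/\Q_\ell}(M^\times)$, which has index $[M:\Q_\ell]=2$ in $M^\times$ by the norm-index theorem of local class field theory, and therefore $\psi^2=1$. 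But $1+\mathfrak m_L$ has no $2$-torsion since $p\neq 2$ (if $(1+x)^2=1$ with $x\in\mathfrak m_L$ then $x(2+x)=0$ and $2+x\in\Om_L^\times$, so $x=0$). Hence $\psi=1$, and as $\mathrm{rec}_M$ has dense image and $\eta$ is continuous, $\eta=1$; that is, $\xi=\xi^\sigma$, so $\xi$ extends to $G_\ell$.

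The routine points are the equivalence ``extends $\iff$ $\sigma$-invariant'' and the elementary absence of $2$-torsion in $1+\mathfrak m_L$; the step demanding the most care is the middle one, namely correctly invoking the compatibility of $\mathrm{rec}_M$ with the Galois action together with the norm-index equality $[M^\times:N_{M/\Q_\ell}M^\times]=2$, which is where the hypothesis $p\neq 2$ is used decisively (one notes, incidentally, that ramification of $M/\Q_\ell$ plays no role in this argument).
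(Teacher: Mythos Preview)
Your reduction to showing $\xi=\xi^\sigma$ and the introduction of the anti-invariant defect $\eta=\xi(\xi^\sigma)^{-1}$ are exactly the right moves, and this is also how the paper begins. However, the step where you conclude $\psi^2=1$ is wrong as written. You assert that $N_{M/\Q_\ell}(M^\times)$ has index $2$ in $M^\times$ ``by the norm-index theorem''; but the norm-index theorem says $[\Q_\ell^\times:N_{M/\Q_\ell}(M^\times)]=2$, and $N_{M/\Q_\ell}(M^\times)\subset\Q_\ell^\times$ has \emph{infinite} index in $M^\times$. So from $\psi(x\,\sigma x)=1$ you cannot deduce $\psi^2=1$ this way.

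This is not merely a citation slip: your parenthetical remark that ramification of $M/\Q_\ell$ plays no role is false, and the lemma genuinely fails in the unramified case. If $M/\Q_\ell$ is unramified and $\ell\equiv -1\pmod p$, pick $\zeta\in\F_{\ell^2}^\times$ of order $p$ and let $\xi$ be the character of $M^\times$ (hence of $G_M$) sending $\zeta$ to a primitive $p$-th root of unity in $1+\mathfrak m_L$ and trivial elsewhere. Then $\overline\xi=1$ certainly extends, while $\xi^\sigma(\zeta)=\xi(\zeta^\ell)=\xi(\zeta)^{-1}\neq\xi(\zeta)$, so $\xi$ does not. Thus any correct argument must use that $M/\Q_\ell$ is ramified.

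Your approach is easily repaired once you bring ramification back in. Since $\ell\neq p$ and $\ell\neq 2$, write $M=\Q_\ell(\pi_M)$ with $\pi_M^2\in\ell\Z_\ell^\times$ and $\sigma(\pi_M)=-\pi_M$. The target $1+\mathfrak m_L$ is pro-$p$, so $\psi$ kills the pro-$\ell$ group $1+\pi_M\O_M$ and the prime-to-$p$ part of $\F_\ell^\times$; in particular $\psi(-1)=1$. As $M/\Q_\ell$ is totally ramified, $\sigma$ acts trivially on $\O_M^\times/(1+\pi_M\O_M)\cong\F_\ell^\times$, and $\psi^\sigma(\pi_M)=\psi(-\pi_M)=\psi(-1)\psi(\pi_M)=\psi(\pi_M)$. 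Hence $\psi^\sigma=\psi$, which together with your anti-invariance $\psi^\sigma=\psi^{-1}$ gives $\psi^2=1$ and then $\psi=1$ as you argue. The paper's own proof is in the same spirit but more hands-on: it treats $\O_M^\times$ and the uniformizer separately, and uses the ramified hypothesis precisely through the facts that the norm on residue fields is squaring and that $\sigma(\pi_M)=-\pi_M$.
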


\begin{proof}
  Let $L/\Q_p$ be a finite extension that contains the image of $\xi$,
  and $\pi$ an uniformizer of this extension. Let $\sigma \in G_\ell$
  be an element not in $G_M$ and define $\xi^\sigma(x) = \xi(\sigma
  x\sigma^{-1})$. We know that $\xi$ extends to $G_\ell$ if and only
  if $\xi = \xi^\sigma$.

  Via local class field theory, the character $\xi$ corresponds to a
  character $\psi$ defined over $M^\times$ and $\xi^\sigma$
  corresponds to $\psi^\sigma(x) = \psi(\sigma(x))$, so $\xi$ extends
  to $G_\ell$ if and only if $\psi$ factors through the norm map
  $N_{M/\Q_\ell}: M^\times \rightarrow \Q_\ell^\times$. Recall that by hypotheses 
  $\psi = \psi^\sigma \pmod \pi$ and
  we want to prove that $\psi = \psi^\sigma$. Let $\overline{\phi}$ be the
  factorization of $\overline{\psi}$ through the norm map.

If we restrict to the inertia subgroup we have the following picture:

$$
\xymatrix{
  \Ker \overline{\psi} \ar[d]_{N}\ar[rr]^{\psi|}\ar[dr]& &1+\pi\O_L \ar[d]\\
  \Ker\overline{\phi} \ar[dr]\ar@{-->}[urr]^{\phi|} &O_M^\times \ar[d]_{N}\ar[r]^{\psi}\ar[dr]_{\overline{\psi}}&\O_L^\times\ar[d]\\
  &\Z_\ell^\times \ar[r]_{\overline{\phi}}&\F_L^\times}$$
  
We are going to construct the dashed arrow $\phi|$ of the diagram above. Observe that $\psi|$ factors through $\Ker \overline{\psi}/(\Ker \overline{\psi} \cap (1+\ell\Z_\ell)) \subseteq \F_\ell^\times$ (since $1+\pi \O_L$ is a pro-p-group) so we have
\[
\xymatrix{ \Ker \overline{\psi} \ar[r]\ar[d]_{N} & \frac{\Ker \overline{\psi}}{\Ker \overline{\psi}\cap (1+\ell\Z_\ell)} \ar[r]^{\psi|} \ar[d]_{f} & 1+\pi O_L\\
           \Ker \overline{\phi} \ar[r] & \frac{\Ker \overline{\phi}}{\Ker \overline{\phi}\cap (1+\ell\Z_\ell)}\ar@{-->}[ur]^{\phi|}}
\]
where the down arrow $f$ is $f(x) = x^2$ (since $M/\Q_\ell$ is
ramified). So we can define the dashed arrow $\phi|$ as $\phi|(x) =
\sqrt{\psi|(x)}$ where $\sqrt{\;\; } : 1+\pi\O_L \rightarrow 1+\pi\O_L$
is the morphism that assigns to every $x \in 1 + \pi\O_L$ its square
root in $1 + \pi\O_L$ (which exists and is unique by Hensel's
Lemma). This makes the diagram commutative and proves that $\phi$ can
be extended in $\Ker \overline{\phi}$.

Now we want to prove that $\psi$ factors through the norm
map. Define $\tau(x) = \psi^\sigma\psi^{-1}$. We know that $\tau:
\O_M^\times \rightarrow 1+\pi\O_L$ and that $\tau(\Ker \overline{\xi})
= 1$. So it factors through $\overline{\tau}: \O_M^\times/\Ker
\overline{\psi} \rightarrow 1+\pi\O_L$, but $\O_M^\times / \Ker
\overline{\psi} \subseteq \F_L^\times$ and the only element of order
$p^n -1$ inside $1+\pi\O_L$ is $1$, so $\tau$ must be trivial and
therefore $\psi = \psi^\sigma$ when restricted to $\O_M^\times$. In
order to deduce $\psi = \psi^\sigma$ from this, we only need to check
it for the uniformizer, which is $\sqrt{\delta p}$ with $\delta = \pm
1$. We have:

$$\psi^\sigma(\sqrt{\delta p}) = \psi(\sigma(\sqrt{\delta p})) = \psi(-\sqrt{\delta p}) = \psi(-1)\psi(\sqrt{\delta p}) = \psi(\sqrt{\delta p}).$$

The last inequality follows from $\psi(-1) = \phi(N(-1)) = \phi(1) = 1$, because $-1 \in \O_M^\times$. We have proved that $\xi$ extends to $G_\ell$. \end{proof}




\begin{remark}
  Since we are only considering representations with unramified
  coefficient field, and $p\ge 5$, this rules out most change of type
  cases while reducing.
 \end{remark}

 \begin{prop}
   Let $\varrho: G_\ell \to \GL_2(W(\F))$ be a continuous representation.
   \begin{itemize}
   \item If $\varrho$ has type a ramified Principal Series
     then $\overline{\varrho}^{ss}$ is ramified.
   \item If $\varrho$ has type an Induced representation then
     $\overline{\varrho}^{ss}$ is ramified.
   \end{itemize}
\label{prop:integralreductiontypes}
 \end{prop}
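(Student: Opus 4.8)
The plan is to prove each bullet by restricting the coefficient hypothesis: we work with $\varrho: G_\ell \to \GL_2(W(\F))$, so the coefficient field is unramified over $\Q_p$, $\pi = p$, and by the Remark preceding the statement this rules out almost all change-of-type phenomena while reducing. First I would observe that the claim is stable under twisting by a character of finite order (such a twist changes the reduction only by a character, so it changes the ramification of $\overline{\varrho}^{ss}$ only by the ramification of that character, which we can control) and under twisting by powers of the cyclotomic character; hence we may apply the normal forms of Proposition 0.6 (the structure result for $\tilde\rho: G_\ell \to \GL_2(\overline{\Z_p})$).

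For the first bullet: a ramified Principal Series over $W(\F)$ is, up to twist and equivalence, of the shape $\left(\begin{smallmatrix}\phi & p^n(\phi-1)\\ 0 & 1\end{smallmatrix}\right)$ with $\phi$ ramified (the $\left(\begin{smallmatrix}1&\psi\\0&1\end{smallmatrix}\right)$ branch is unramified, so it is excluded by the hypothesis that $\varrho$ is ramified Principal Series — here one must check that "ramified Principal Series" genuinely forces $\phi$ ramified, which is the content of the classification). Its semisimplification is $\left(\begin{smallmatrix}\phi & 0\\ 0 & 1\end{smallmatrix}\right)$, so $\overline{\varrho}^{ss} \simeq \left(\begin{smallmatrix}\overline\phi & 0\\ 0 & 1\end{smallmatrix}\right)$, and the question is whether $\overline\phi$ can be unramified while $\phi$ is ramified. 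By Lemma 0.7 (the ramification-losing lemma) this would force $\ell \equiv 1 \pmod p$; but then $\phi$ ramified with $\overline\phi$ unramified means $\phi|_{I_\ell}$ takes values in $\mu_{\ell-1}$ that are $\equiv 1 \pmod p$, i.e. in $1 + p\overline{\Z_p}$. Since the coefficients are in $W(\F)$ which is \emph{unramified}, the only root of unity in $1 + pW(\F)$ is $1$ (Hensel / the usual $p$-adic logarithm argument: $1+pW(\F)$ is torsion-free when $p \ge 3$), so $\phi|_{I_\ell}$ is trivial, contradicting that $\phi$ is ramified. Hence $\overline\phi$ is ramified and $\overline{\varrho}^{ss}$ is ramified.

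For the second bullet: if $\varrho$ is Induced, $\varrho = \Ind_{G_M}^{G_\ell}(\xi)$ with $M/\Q_\ell$ quadratic and $\xi \neq \xi^\sigma$. If $\overline\xi$ does not descend, $\overline\varrho$ is irreducible hence ramified, and we are done. If $\overline\xi$ descends, then by Proposition 0.8 (the local reduction types) and its proof, $\overline\varrho$ is Steinberg or unramified Principal Series and in either subcase $\ell \equiv -1 \pmod p$ with $M/\Q_\ell$ \emph{unramified}. In the Steinberg subcase $\overline\varrho$ is ramified, so again done. It remains to rule out the unramified Principal Series subcase for coefficients in $W(\F)$: here $M/\Q_\ell$ unramified and $\overline\xi$ descends to $G_\ell$ forces, by the same root-of-unity argument as above applied to $\xi/\xi^\sigma$ (a character of $G_M$ killed modulo $p$, hence landing in $1 + pW(\F')$ for an unramified $W(\F')$, hence trivial), that $\xi = \xi^\sigma$ — contradicting that $\varrho$ is genuinely Induced. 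So $\overline{\varrho}^{ss}$ is ramified in all cases.

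The main obstacle is the second bullet's last step: one must be careful that the relevant quotient character $\xi\xi^{-\sigma}$ really does land in $1+pW(\F')$ with $W(\F')$ unramified (so that the "no nontrivial torsion in $1+pW(\F')$" principle applies), rather than merely in $1 + \pi\O_L$ for some ramified $\O_L$ where small roots of unity could survive — but this is exactly where the hypothesis $\varrho: G_\ell \to \GL_2(W(\F))$ (unramified coefficients) does the work, and the quadratic extension $M/\Q_\ell$ being unramified keeps the coefficient ring of $\xi$ unramified over $\Q_p$ as well. The rest is bookkeeping with the normal forms and Lemmas 0.7 and the ramified-$M$ lemma already proved above.
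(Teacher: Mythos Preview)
Your overall strategy matches the paper's --- reduce to showing that a certain nontrivial root of unity congruent to $1$ modulo the maximal ideal must have $p$-power order, and then argue this is incompatible with the coefficient ring --- but there are two genuine gaps in the execution.

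First, in the Induced case you write that in the Steinberg subcase ``$\overline\varrho$ is ramified, so again done.'' But the statement is about $\overline{\varrho}^{ss}$, not $\overline{\varrho}$: a Steinberg reduction has semisimplification $\overline{\xi}\chi \oplus \overline{\xi}$ (up to twist), and since the mod-$p$ cyclotomic character is unramified at $\ell \neq p$, this is unramified as soon as $\overline{\xi}$ is. So that subcase is not handled. Fortunately your $\xi/\xi^\sigma$ argument only uses $\overline{\xi} = \overline{\xi}^\sigma$, not which bin $\overline{\varrho}$ lands in, so you can drop the Steinberg/PS case split entirely and run that argument directly once $\overline{\xi}$ descends.

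Second, and more seriously, your justification that $\xi/\xi^\sigma$ (and likewise $\phi$ in the first bullet) takes values in an unramified $W(\F')$ --- ``$M/\Q_\ell$ being unramified keeps the coefficient ring of $\xi$ unramified over $\Q_p$'' --- conflates the $\ell$-adic extension $M/\Q_\ell$ with the $p$-adic coefficient ring; these have nothing to do with one another. The values $\xi(\tau_\ell), \xi^\sigma(\tau_\ell)$ are the eigenvalues of $\varrho(\tau_\ell) \in \GL_2(W(\F))$, hence a priori lie only in a (possibly ramified) quadratic extension of $\mathrm{Frac}(W(\F))$, and the same goes for $\phi(\tau_\ell)$ in the first bullet. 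The paper closes this gap by a degree count rather than a torsion-freeness claim: if the relevant root of unity is nontrivial of $p$-power order it generates a totally ramified extension of $\Q_p$ of degree at least $p-1$, yet it must lie in an extension of degree at most $2$ over the unramified field $\mathrm{Frac}(W(\F))$; since $p > 3$ this is a contradiction. Replacing your ``$1+pW(\F')$ is torsion-free'' step with this degree argument fixes both bullets.
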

\begin{proof}
  For the first case, assume that $\overline{\varrho}^{ss}$ is
  unramified. Then $\overline{\phi} = 1$ which by the remark following
  Lemma \ref{lemma:1} implies that $\ell \equiv 1 \pmod{p}$ and
  $\phi(\tau_\ell)$ has order a power of $p$. Therefore the
  eigenvalues of $\varrho(\tau_\ell)$ generate a totally ramified
  extension of $\Q_p$ of degree at least $p-1$, which is clearly
  impossible as they also have to satisfy a polynomial of degree $2$
  over some unramified extension of $\Q_p$ and $p > 3$.

  For the second one, assume that $\overline{\varrho}^{ss}$ is
  unramified. Then necessarily $\overline{\xi} =
  \overline{\xi^\sigma}$, implying that the character $\psi =
  \xi/\xi^\sigma$ loses all of its ramification when reduced. Again by
  the remark following Lemma \ref{lemma:1} this implies that
  $\psi(\tau_\ell)$ has order a power of $p$ implying that it
  generates a totally ramified extension of degree at least $p-1 > 2$. But
  $\psi(\tau_\ell)$ is the quotient between the eigenvalues of
  $\varrho(\tau_\ell)$, so it lies in an extension of degree $2$ of
  some unramified extension of $\Q_p$ which is absurd.
\end{proof}

\section{Local cohomological dimensions}

To apply Ramakrishna's method in our situation we need to compute
$d_i=\dim \coho^i(G_\ell, \Ad)$ for $i = 1,2$. The strategy in each case
is as follows: we first compute $d_0$ and $d_0^*$ (where $d_i^*=\dim
\coho^i(G_\ell, (\Ad)^*)$). By local Tate duality $d_2 = d_0^*$ and then
we can derive $d_1$ from the local Euler-Poincare characteristic
(which is zero). We do such computation in each case of the
classification of mod $p$ representations by choosing a good basis for each space.

\subsection*{Ramified Principal Series case:}
in this case we have $\overline{\rho} =\left( \begin{smallmatrix}
\phi&0\\
0&1\\
\end{smallmatrix}\right)$ with $\phi$ a ramified multiplicative
character. It easily follows that $\Ad \simeq \F(1) \oplus \F(\phi)
\oplus \F(\phi^{-1})$. As $\phi$ is ramified, $\F(\phi)$
(resp. $\F(\phi^{-1})$) is not isomorphic to $\F(1)$ nor $\F(\chi)$. So
we have two cases:

\begin{enumerate}
\item $\ell \equiv 1 \pmod p$ then  $d_0 = 1$, $d_2 = 1$ and therefore $d_1 = 2$.
\item $\ell \not \equiv 1 \pmod p$ then $d_0 = 1$, $d_2 = 0$ and therefore $d_1 = 1$.
\end{enumerate}
\subsection*{The Steinberg case:} in this case we need to do the
computations by hand. Considering the basis $\set{e_{01}, e_{10},
  e_{00} + e_{11}}$ of the space of matrices with trace zero and
explicitly computing the action of $\Ad$ on them, we derive the values
of the numbers $d_i$, which are:
\begin{enumerate}
\item If $\ell \equiv 1 \pmod p$ then $d_0 = 1$, $d_2 = 1$ and therefore $d_1 = 2$.
\item If $\ell \equiv -1 \pmod p$ then $d_0 = 0$, $d_2 = 1$ and therefore $d_1 = 1$.
\item If $\ell \not \equiv \pm 1 \pmod p$ then $d_0 = 0$, $d_2 = 0$ and therefore $d_1 = 0$.
\end{enumerate}
\subsection*{The Induced case:}

Recall the following Lemma (see \cite{ravi3}, Lemma 4)

\begin{lemma}
  Let $M/\Q_\ell$ a quadratic extension and $\overline{\rho}: G_\ell
  \longrightarrow \GL_2(\overline{\F_p})$ be twist-equivalent to
  $\Ind_{G_M}^{G_\ell} \xi$, with $\xi$ a character of $G_M$ which is
  not equal to its conjugate under the action of $\Gal(M/\Q_\ell)$.

Then $\Ad \simeq A_1 \oplus A_2$, with $A_i$ an absolutely irreducible $G_\ell$-module of dimension $i$ and $\coho^0(G_\ell, \Ad) = 0$. Moreover $\coho^2(G_\ell, \Ad) = 0$ unless $M/\Q_\ell$ is not ramified and $\ell \equiv -1 \hspace{0.1cm} (mod \hspace{0.1cm} p)$ in which case it is one dimensional.
\end{lemma}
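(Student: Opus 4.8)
The plan is to write $\Ad$ down explicitly as a $G_\ell$-module and then read off every cohomology group from that decomposition. Since $\Ad$ does not change if we twist $\overline{\rho}$ by a character, I may assume $\overline{\rho}=\Ind_{G_M}^{G_\ell}\xi$. Let $\sigma$ generate $\Gal(M/\Q_\ell)$ and write $\xi^\sigma$ for the conjugate character. Feeding $\End(\overline{\rho})=\Ind_{G_M}^{G_\ell}\xi\otimes(\Ind_{G_M}^{G_\ell}\xi)^\vee$ through the projection formula $\Ind_{G_M}^{G_\ell}(W)\otimes V\cong\Ind_{G_M}^{G_\ell}(W\otimes\operatorname{Res}_{G_M}V)$, together with $\operatorname{Res}_{G_M}\Ind_{G_M}^{G_\ell}\xi^{-1}=\xi^{-1}\oplus(\xi^\sigma)^{-1}$, gives
\[
\End(\overline{\rho})\;\cong\;\Ind_{G_M}^{G_\ell}(\mathbf 1)\;\oplus\;\Ind_{G_M}^{G_\ell}(\xi/\xi^\sigma).
\]
Because $p$ is odd, $\Ind_{G_M}^{G_\ell}(\mathbf 1)\cong\mathbf 1\oplus\epsilon$, where $\epsilon$ is the order two quadratic character of $G_\ell$ cut out by $M$; removing the scalar summand $\mathbf 1$ leaves $\Ad\cong A_1\oplus A_2$ with $A_1=\epsilon$ and $A_2=\Ind_{G_M}^{G_\ell}(\xi/\xi^\sigma)$. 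Here $A_1$ is a nontrivial character, hence absolutely irreducible of dimension $1$; and $A_2$ has dimension $2$ and is absolutely irreducible because $\xi/\xi^\sigma$ is not $\Gal(M/\Q_\ell)$-stable --- indeed $(\xi/\xi^\sigma)^\sigma=(\xi/\xi^\sigma)^{-1}$, so stability would force $(\xi/\xi^\sigma)^2=1$, which one excludes from $\xi\neq\xi^\sigma$. This gives the first assertion.

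For $\coho^0$ I would use Frobenius reciprocity, $\coho^0(G_\ell,\Ind_{G_M}^{G_\ell}\psi)\cong\psi^{G_M}$: this is nonzero only for $\psi=\mathbf 1$, so $\coho^0(G_\ell,A_2)\neq 0$ would force $\xi=\xi^\sigma$, against the hypothesis, while $\coho^0(G_\ell,A_1)=\epsilon^{G_\ell}=0$ since $\epsilon$ is nontrivial. Hence $\coho^0(G_\ell,\Ad)=0$.

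For $\coho^2$ I would invoke local Tate duality, $\coho^2(G_\ell,\Ad)\cong\coho^0(G_\ell,(\Ad)^*)^\vee$. Since $p\neq 2$, the trace form identifies $\Ad\cong\Ad^\vee$, so $(\Ad)^*\cong\Ad(\chi)=A_1(\chi)\oplus A_2(\chi)$, where $\chi$ is the mod $p$ cyclotomic character and $A_i(\chi)$ its twist by it. By the same Frobenius reciprocity argument, $A_2(\chi)=\Ind_{G_M}^{G_\ell}\big((\xi/\xi^\sigma)\cdot\chi|_{G_M}\big)$ has no invariants, since that would require $\xi/\xi^\sigma=\chi^{-1}|_{G_M}$, impossible as $\chi^{-1}|_{G_M}$ is $\Gal(M/\Q_\ell)$-stable while $\xi/\xi^\sigma$ is not. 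Thus $\coho^0(G_\ell,(\Ad)^*)=(\epsilon\chi)^{G_\ell}$, which is nonzero --- and then one dimensional --- exactly when $\epsilon=\chi^{-1}$ as characters of $G_\ell$. Since $\ell\neq p$, the character $\chi$ is unramified with $\chi(\Frob_\ell)\equiv\ell\pmod p$; hence $\epsilon=\chi^{-1}$ forces $\epsilon$ to be unramified, i.e. $M/\Q_\ell$ unramified, and $-1=\epsilon(\Frob_\ell)\equiv\ell\pmod p$, i.e. $\ell\equiv-1\pmod p$ --- and conversely these two conditions give $\epsilon=\chi$. This is the claimed dichotomy (and $\coho^1$ then follows from the vanishing of the local Euler characteristic, as in the other cases of this section).

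The computation is essentially bookkeeping once the splitting $\Ad\cong A_1\oplus A_2$ is in hand; the step that genuinely needs care is the last one --- correctly matching the quadratic character $\epsilon$ of $M/\Q_\ell$ against the reduction of the cyclotomic character (keeping the ramified and unramified cases of $M/\Q_\ell$ apart, and pulling out the congruence $\ell\equiv-1\pmod p$), and checking that the induced summand $A_2$ and its cyclotomic twist contribute no invariants, which is exactly where one uses that $\xi/\xi^\sigma$ is not $\Gal(M/\Q_\ell)$-stable.
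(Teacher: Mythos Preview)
The paper does not prove this lemma; it simply cites \cite{ravi3}, Lemma~4, so there is no in-paper argument to compare against. Your approach---decomposing $\Ad$ via the projection formula as $\epsilon\oplus\Ind_{G_M}^{G_\ell}(\xi/\xi^\sigma)$ and reading off $\coho^0$ and $\coho^2$ by Frobenius reciprocity and Tate duality---is the natural one, and the $\coho^0$ computation is clean and correct.

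There is, however, a genuine gap. You assert that $(\xi/\xi^\sigma)^2=1$ ``one excludes from $\xi\neq\xi^\sigma$'', and you use this both for the absolute irreducibility of $A_2$ and for the vanishing of $\coho^0(G_\ell,A_2(\chi))$. But $\xi\neq\xi^\sigma$ only says $\xi/\xi^\sigma\neq 1$; it does not forbid order~$2$. Worse, when $M/\Q_\ell$ is \emph{ramified} (and $\ell$ is odd), $(\xi/\xi^\sigma)^2=1$ \emph{always} holds: any $\xi:G_M\to\overline{\F_p}^\times$ is tame (wild inertia is pro-$\ell$), hence via local class field theory is determined by its value on a uniformizer $\pi$ and on $\mu_{\ell-1}\subset\Q_\ell^\times$; since $\sigma$ fixes $\mu_{\ell-1}$ and sends $\pi\mapsto-\pi$, one computes that $\xi/\xi^\sigma$ is the unramified character with $(\xi/\xi^\sigma)(\Frob_M)=\psi_\xi(-1)=\pm1$. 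So in the ramified case $A_2$ is never irreducible---it splits as $\epsilon'\oplus\epsilon\epsilon'$ where $\epsilon'$ is the unramified quadratic character of $G_\ell$---and your argument that $\xi/\xi^\sigma\neq\chi^{-1}|_{G_M}$ ``because the latter is $\sigma$-stable while the former is not'' collapses. Indeed $\Ad\cong\epsilon\oplus\epsilon'\oplus\epsilon\epsilon'$ then, and $\coho^0(G_\ell,\Ad(\chi))\neq0$ exactly when $\chi=\epsilon'$, i.e.\ when $\ell\equiv-1\pmod p$. To recover the lemma's dichotomy in this situation you need a further observation: such $\overline\rho$ (with Klein-four projective image) is \emph{also} induced from the unramified quadratic extension, so one is really in the ``$M$ unramified, $\ell\equiv-1$'' branch after all---but your write-up does not make this identification, and as stated the argument for the ramified case is incomplete.
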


So for the Induced case we have two possibilities:

\begin{enumerate}
\item If $\ell \equiv -1 \pmod p$ and $M/\Q_\ell$ is unramified then $d_0 =
  0$, $d_2 = 1$ and therefore $d_1 = 1$.
\item If $\ell \not \equiv -1 \pmod p$ or $M/\Q_\ell$ is ramified then $d_0 = 0$, $d_2 = 0$ and therefore $d_1 = 0$.
\end{enumerate}

\subsection*{Unramified case:} if $\rrho$ is unramified, we
consider the following three cases according to the image of
Frobenius:

\begin{enumerate}
 \item $\rrho (\Frob_p)= \left(\begin{smallmatrix}
                              1&0\\
                              0&1
                             \end{smallmatrix}\right)$.
 \smallskip

\noindent In this case $\Ad \simeq \F^3$ thence we have two possibilities:

\begin{itemize}
\item $\ell \equiv 1 \pmod p$ then  $d_0 = 3$, $d_2 = 3$ and therefore $d_1 = 6$.
\item $\ell \not \equiv 1 \pmod p$ then $d_0 = 3$, $d_2 = 0$ and therefore $d_1 = 3$.
\end{itemize}\medskip

 \item $\rrho(\Frob_p)= \left(\begin{smallmatrix}
                              \alpha&0\\
                              0&1
                             \end{smallmatrix}\right)$ with $\alpha \not \equiv 1 \pmod p$.
\smallskip

\noindent We have that $\Ad \simeq \F\oplus \F(\phi) \oplus \F(\phi^{-1})$, with $\phi \neq 1$ and $\phi = \chi$ only if $\alpha \equiv \ell \pmod p$.
Again, we need to distinguish between cases:

\begin{itemize}
 \item $\ell \equiv -1 \pmod p$ and $\ell \equiv \alpha, \alpha^{-1} \pmod p$ then $d_0 = 1$, $d_2 = 2$ and therefore $d_1 = 3$.
 \item $\ell \equiv -1 \pmod p$ and $\ell \not \equiv \alpha, \alpha^{-1} \pmod p$ then $d_0 = 1$, $d_2 =0 $ and therefore $d_1=1$.
 \item $\ell \not \equiv -1 \pmod p$ and $\ell \equiv \alpha, \alpha^{-1}$ or $1 \pmod p$ then $d_0 = 1$, $d_2 =1$ and therefore $d_1=2$.
 \item $\ell \not \equiv -1 \pmod p$ and $\ell \not \equiv \alpha, \alpha^{-1}$ or $1 \pmod p$ then $d_0 = 1$, $d_2 =0$ and therefore $d_1=1$.

\end{itemize}

\medskip

 \item[(c)] $\rrho(\Frob_p)= \left(\begin{smallmatrix}
                              1&1\\
                              0&1
                             \end{smallmatrix}\right)$.
\smallskip

\noindent Here we do the computations by hand and establish that:

\begin{itemize}
 \item If $\ell \equiv 1 \pmod p$ then $d_0 = d_2 = 1$ and therefore $d_1 = 2$.
 \item If $\ell \not \equiv 1 \pmod p$ then $d_0 = 1$, $d_2 = 0$ and therefore $d_1 = 1$.
\end{itemize}

\end{enumerate}

\section{The sets $C_\ell$}

In order to apply Ramakrishna's method we need to define for each
prime $\ell \in P$ a set $C_\ell$ of deformations of $\overline{\rho}$ 
(containing $\rho_\ell$) and a subspace $N_\ell \subseteq \coho^1(G_\ell, \Ad)$ of dimension $d_1 - d_2$
such that $\overline{\rho}$ can be successively deformed to an
element of $C_\ell$ by deforming from $W(\F)/p^s$ to $W(\F)/p^{s+1}$
with adjustments at each step made only by a multiple of an element $h \notin N_\ell$.
In order to get the full statement of our theorem, we have to take the 
extra care of picking the set $C_\ell$ such that all its elements agree up to
isomorphism in the inertia group with $\rho_\ell$.

Notice that it is enough to do this for one representative of each of the possible 
types of $\GL_2(\overline{\Z_p})$-equivalence for $\rho_\ell$, as we can always pick 
a basis for $\rho_n$ for which it is the reduction of one of those representatives.
The only extra care we need to take is making sure that whenever we pick a set $\C_\ell$, 
the deformations that belong to it have all coefficients in $W(\F))$ and not in a bigger
extension of $\Q_p$. The potential issue that this may bring is that sometimes we cannot
use the representatives of $\GL_2(\overline{\Z_p})$-equivalence classes we defined above 
and need to translate our calculations to $W(\F)$.


We classify the selection of the sets $C_\ell$ according to the type of $\rrho$, considering
for each one, all the possible types for $\rho_\ell$.

%
%
%
%
%
%
%

\medskip

\noindent \textbf{Case 1: $\overline{\rho}$ is ramified Principal Series.} When $\overline{\rho}$
is ramified Principal Series, we have seen that $\rho_\ell$ can only be Principal
Series. Nevertheless, the cohomology groups are different depending on
whether $\ell \equiv 1 \pmod p$ or not.  Recall that the representatives for the equivalence classes were
 $\rho_\ell \simeq \left(\begin{smallmatrix}
  \phi& \pi^n(\phi -1)\\
  0&1
\end{smallmatrix}\right)$ with $n\leq 0$ such that $\pi^n(\phi -1)$ lies in $\overline{\Z_p}$. Observe that 
if $n \neq 0$, then $\pi \mid (\phi-1)$ and therefore its reduction is
not ramified Principal Series (the residual case $\left(\begin{smallmatrix}
  1& *\\
  0&1
\end{smallmatrix}\right)$ is unramified or Steinberg according to our classification).
Then $\rho_\ell \simeq \begin{pmatrix}
  \phi&0\\
  0&1
\end{pmatrix}$ over $\GL_2(\overline{\Z_p})$ and we have the following cases:

\begin{enumerate}
\item If $\ell \not \equiv 1 \mod p$, $d_0 = d_1 = 1$
and $d_2 = 0$ so we must take $N_\ell = \coho^1(G_\ell, \Ad)$ the full
cohomology group so there is no possible choice at each step and
$C_\ell$ must be the full set of deformations to characteristic
zero. Notice that this is the only possible choice whenever $d_2 = 0$ and $\ell \neq p$
and in this case we have to check that any lift of $\overline{\rho}$ to
$W(\F)/p^s$ is the reduction of a characteristic zero one, but this is
automatic as $d_2 = 0$ so the problem is unobstructed. 

In order to check that all the elements of $C_\ell$ agree up to
isomorphism when restricted to $I_\ell$, we need to describe the set $C_\ell$.
If we define a morphism $n : G_\ell \rightarrow
G_\ell/I_\ell \simeq \hat{\Z} \rightarrow \Z/p\Z$, then the element
\[
h(g) = \begin{pmatrix}
n(g)&0\\
0&-n(g)
\end{pmatrix}
\] 
generates $\coho^1(G_\ell, \Ad)$ and this implies that every lift is
Principal Series, as the set $\lambda h \cdot \psi_s$, where $\psi$ is
the Teichmuller lift of $\overline{\rho}$ and $\lambda$ is a scalar,
exhausts all the possible reductions. In particular, the restriction
to inertia is the same for all of them.

\item If $\ell \equiv 1 \pmod p$ the picture is slightly different
  since $d_0=1$, $d_1=2$ and $d_2=1$, so we need to choose a one
  dimensional subspace $N_\ell$ and a set of deformations $C_\ell$ to
  $W(\F)$. Observe that the isomorphism between $\rho_\ell$ and the
  representative of its $\GL_2(\overline{\Z_p})$-equivalence class may
  not realize over $W(\F)$.

  If the image of $\psi_1$ lies in $W(\F)$, then the isomorphism
  does realize over $W(\F)$.  In that case, observe that the element $h$
  defined above lies inside $\coho^1(G_\ell, \Ad)$. Let $N_\ell =
  \langle h \rangle$, and $C_\ell = \left\lbrace \begin{pmatrix}
      \psi_1\gamma&0\\
      0&\psi_2\gamma^{-1}
\end{pmatrix}: \gamma \hspace{0.1cm} \text{unramified character} \right\rbrace$. 

We claim that this choice verifies the hypotheses. Clearly $\rho_\ell
\in C_\ell$, and given any $h' \notin N_\ell$, the full
$\coho^1(G_\ell, \Ad)$ is generated by $h$ and $h'$. Then for any mod
$p^s$ deformation $\tilde{\rho}$ of $\rrho$ there is an element
$\lambda_1h + \lambda_2h' \in \coho^1(G_\ell, \Ad)$ such that
$(\lambda_1h + \lambda_2h')\tilde{\rho}$ lies in $C_\ell$. But the
action of any multiple of $h$ preserves the elements of $C_\ell$, so
$\lambda_2h'\tilde{\rho}$ already lies in $C_\ell$. Note that as in
the previous case, all the elements in $C_\ell$ have the same
restriction to inertia.\\

If the image of $\psi_1$ does not lie in $W(\F)$ then $\rho_\ell$ is not
isomorphic to $\left(\begin{smallmatrix}
    \psi_1&0\\
    0&\psi_2
  \end{smallmatrix} \right)$ over $W(\F)$ and we cannot use the
previous choice. Instead, we need to use a canonical form for $\rho_\ell$
over $W(\F)$. Assume that $\psi_1(\sigma_\ell) = \alpha$
and $\psi_2(\sigma_\ell) = \beta$, then the matrix $C =
\left(\begin{smallmatrix}
    -\beta& -\alpha\\
    1&1
\end{smallmatrix} \right)$ 
conjugates $\left(\begin{smallmatrix}
    \psi_1(\sigma_\ell)&0\\
    0&\psi_2(\sigma_\ell)
\end{smallmatrix} \right)$ into 
$\left(\begin{smallmatrix}
    0&-\alpha \beta\\
    1& \alpha + \beta
\end{smallmatrix} \right) \in \GL_2(W(\F))$. Therefore we can assume (applying a change of basis)
that $\rho_\ell(\sigma_\ell) = \left(\begin{smallmatrix}
    0&-\alpha \beta\\
    1& \alpha + \beta
  \end{smallmatrix} \right) $.  Then we can essentially use the same
sets and subspaces as in the previous case but conjugated by
$C$.

Let $N_\ell = \langle (\alpha - \beta)C h C^{-1}\rangle$, where $h$ is
the element defined before, and $C_\ell$ the set of deformations to
$W(\F)$ of the form $C\left(\begin{smallmatrix}
    \psi_1\gamma&0\\
    0&\psi_2\gamma^{-1}
\end{smallmatrix}\right)C^{-1}$
with $\gamma:G_\ell \to \overline{\Z_p}$ an unramified character. The
factor $\alpha - \beta$ forces the element generating $N_\ell$ to have
coefficients in $W(\F)$.

It can be easily checked that whenever $\trho$ is the reduction of
some element in $C_\ell$ and $u \in N_\ell$ then $(1+p^nu)\trho$ is
again the reduction of an element of $C_\ell$. Therefore the same
reasoning as before shows that $N_\ell$ and $C_\ell$ satisfy our
hypotheses.


\end{enumerate}

\begin{remark}
  Whenever we construct a set $C_\ell$ and subspace $N_\ell$ such that
  $N_\ell$ preserves the reductions of $C_\ell$ (i.e. whenever $\trho$
  is the reduction of some element of $C_\ell$ and $u \in N_\ell$,
  $u\cdot \trho$ is reduction of some element of $C_\ell$ as well) the
  proof is exactly the same. In the next cases the same phenomena will occur.
  
\end{remark}

\medskip

\noindent \textbf{Case 2: $\overline{\rho}$ is Steinberg.}  If
$\overline{\rho}$ is of Steinberg type then Proposition
\ref{prop:localtypesreductions} and Proposition~\ref{prop:integralreductiontypes} imply that $\rho_\ell$ can only be Steinberg.


\begin{enumerate}
\item If $\ell \not \equiv \pm 1 \pmod p$, by the previous section results, $d_0 = d_1 =
d_2 = 0$, implying there is only one deformation at each $p^n$. We take $C_\ell = \set{\rho_\ell}$.

\item If $\ell \equiv -1 \pmod p$, by the previous section results, $d_1 =
d_2 = 1$ and $d_0 = 0$, so $N_\ell = \set{0}$ and we have the full
$\coho^1(G_\ell, \Ad)$ available to adjust at every step. Then we take
$C_\ell = \set{\rho_\ell}$.

\item If $\ell \equiv 1 \pmod p$, we take the element $j \in \coho^1(G_\ell, \Ad)$ given by
$0$ at the wild inertia subgroup and by
\[
j(\sigma_\ell) = \begin{pmatrix}
0&1\\
0&0
\end{pmatrix}\hspace{0.2cm} \text{,} \hspace {0.2cm}j(\tau_\ell) = \begin{pmatrix}
0&0\\
0&0
\end{pmatrix},\] where $\sigma_\ell$ is a Frobenius element and
$\tau_\ell$ a tame inertia generator (recall these two generate
$G_\ell/W_\ell$, where $W_\ell$ is the wild inertia, subject to the
relationship $\sigma\tau\sigma^{-1} = \tau^\ell$).  Let $N_\ell =
\langle j \rangle$ and $C_\ell$ the set of lifts $\rho$ satisfying
\[
\rho(\sigma_\ell) = \begin{pmatrix}
\ell&*\\
0&1
\end{pmatrix} \hspace{0.2cm} \text{and} \hspace{0.2cm} \rho(\tau_\ell) = \begin{pmatrix}
1&*\\
0&1
\end{pmatrix}.
\]
This set contains all the extensions $\tilde{\rho_\ell}$ of
$\rho_\ell$ to the decomposition group, and $N_\ell$ preserves its
reductions.

\end{enumerate}

\medskip

\noindent \textbf{Case 3: $\overline{\rho}$ is Induced.}  If $\overline{\rho}$ is
Induced then the only possibility for $\rho_\ell$ is also being of
Induced type.
\begin{enumerate}
\item If $\ell \equiv 1 \pmod p$ and $M/\Q_\ell$ is unramified,
  $d_0=0$, $d_1=d_2=1$ so $N_\ell$ is of codimension $1$ inside a
  space of dimension $1$, hence $N_\ell = \set{0}$. We take 
  $C_\ell = \set{\rho_\ell}$. Since we can adjust at every step by a
  multiple of a given element $h \notin \set{0}$, and $d_1 = 1$, we
  can adjust at each step by any element of $\coho^1(G_\ell, \Ad)$ to modify
  $\rho_n$ as we want.
\item If $\ell \not \equiv 1 \pmod p$ or $M/\Q_\ell$ is ramified, $d_0
  = d_1 = d_2 = 0$, so there is only one lift at every step. This lift
  must be the reduction of $\rho_\ell$, so there is nothing to adjust.
\end{enumerate}

\medskip

\noindent \textbf{Case 4: $\overline{\rho}$ is unramified.}  We need
to define the sets $C_\ell$ for the primes at which $\rho_n$ ramifies
and $\rrho$ does not.  By
Proposition~\ref{prop:integralreductiontypes} this can only happen
when $\rho_\ell$ is Steinberg.

\medskip

We have that $\rho_\ell =\left( \begin{smallmatrix}
\chi&*\\
0&1\\
\end{smallmatrix}\right)$, 
with $*|_{I_\ell} \neq 0 \pmod{p^n}$. The sets $C_\ell$ we will pick depend on the image of $\sigma_\ell$. Recall
that the eigenvalues of $\rrho(\sigma_\ell)$ are $1$ and $\ell$.  
\begin{enumerate}
\item If  $\rrho(\sigma_\ell) =\left( \begin{smallmatrix}
1&0\\
0&1\\
\end{smallmatrix}\right),$
necessarily $\ell \equiv 1 \pmod p$ implying $d_1 = 6$ and $d_2 = 3$
and therefore we need a subspace of dimension $3$, preserving a family
of deformations $C_\ell$. In the previous cases, we have built sets
$C_\ell$ of deformations of $\rho_n$ that depend on $d_2 - d_1$
parameteres, which in this case does not seem to be possible. However,
as pointed to us by Ravi Ramakrishna, one can construct elements which
are not cohomological trivial for the residual representation, but
give isomorphic lifts modulo big powers of $p$, as in Section $4$ of
\cite{ravihamblen}. Let $C_\ell$ be the set of deformations of
$\rho_n$ satisfying:

\[
\rho(\sigma_\ell) = \begin{pmatrix}
\ell&*\\
0&1
\end{pmatrix} \hspace{0.2cm} \text{and} \hspace{0.2cm} \rho(\tau_\ell) = \begin{pmatrix}
1&*\\
0&1
\end{pmatrix}.
\]

Observe that this family depends on two parameters and is clearly preserved by the
elements $u_1, u_2 \in \coho^1(G_\ell, \Ad)$ given by

\[
u_1(\sigma_\ell) = \begin{pmatrix}
0&1\\
0&0
\end{pmatrix}\hspace{0.2cm} \text{,} \hspace {0.2cm}u_1(\tau_\ell) = \begin{pmatrix}
0&0\\
0&0
\end{pmatrix},\]

and

\[
u_2(\sigma_\ell) = \begin{pmatrix}
0&0\\
0&0
\end{pmatrix}\hspace{0.2cm} \text{,} \hspace {0.2cm}u_2(\tau_\ell) = \begin{pmatrix}
0&1\\
0&0
\end{pmatrix}.\]

We still need one more element of $\coho^1(G_\ell, \Ad)$ to preserve $C_\ell$. Recall that $\rho_n$ 
satisfies

\[
\rho_n(\sigma_\ell) = \begin{pmatrix}
\ell&x\\
0&1
\end{pmatrix} \hspace{0.2cm} \text{and} \hspace{0.2cm} \rho_n(\tau_\ell) = \begin{pmatrix}
1&y\\
0&1
\end{pmatrix},
\]
with $y \neq 0$. There exists an element $v \in \coho^1(G_\ell, \Ad)$  that satisfies
   that whenever $\rho_m$ is the reduction modulo $p^m$ of some element in $C_\ell$ then $(1+p^{m-1}v)\rho_m$ 
is the same deformation as $\rho_m$. The element $v$ will depend on the valuations of $x$, $y$ and
$\ell-1$. As we mentioned in the introduction of this Section, we only need to do this for $m \geq n+1$.

\begin{lemma}
\label{lemma:v}
 There exists an element $v \in \coho^1(G_\ell, \Ad)$ such that whenever $\rho_m$ is the reduction modulo 
 $p^m$ of some element in $C_\ell$, with $m \geq n+1$, then $(1+p^{m-1}v)\rho_m$ 
is the same deformation as $\rho_m$.
\end{lemma}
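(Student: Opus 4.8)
The plan is to realise $v$ as coming from an inner automorphism: I will produce, for each reduction $\rho_m$ of an element of $C_\ell$ and each $m\ge n+1$, a matrix $g_0=g_0(\rho_m)\in\GL_2(W(\F)/p^m)$ with $g_0\equiv I\pmod p$ such that $g_0\rho_m g_0^{-1}=(1+p^{m-1}v)\rho_m$, where $v$ is one fixed class, independent of $m$ and of $\rho_m$. Since $g_0$ reduces to the identity, the two sides are strictly equivalent, which is what ``the same deformation'' means. First I would record the relevant structure: because $\overline{\rho_n}|_{G_\ell}$ is trivial (it is unramified with trivial Frobenius), $\Ad$ is the trivial $G_\ell$-module $\mathfrak{sl}_2(\F)$, so $\coho^1(G_\ell,\Ad)=\Hom(G_\ell^{\mathrm{ab}},\mathfrak{sl}_2(\F))$; every class factors through the tame quotient $\langle\sigma_\ell,\tau_\ell\mid\sigma_\ell\tau_\ell\sigma_\ell^{-1}=\tau_\ell^{\ell}\rangle$ and is freely determined by its values on $\sigma_\ell$ and $\tau_\ell$, and $u_1,u_2$ span exactly the plane of classes taking values in the line through $\left(\begin{smallmatrix}0&1\\0&0\end{smallmatrix}\right)$. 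Also, every reduction $\rho_m$ of an element of $C_\ell$ kills wild inertia (a pro-$\ell$ group has no nontrivial pro-$p$ quotient), hence is upper triangular and determined by $\rho_m(\sigma_\ell)=\left(\begin{smallmatrix}\ell&x_m\\0&1\end{smallmatrix}\right)$, $\rho_m(\tau_\ell)=\left(\begin{smallmatrix}1&y_m\\0&1\end{smallmatrix}\right)$; and since $\rho_m\equiv\rho_n\pmod{p^n}$ while $y\not\equiv 0\pmod{p^n}$ but $\rho_n|_{I_\ell}\equiv I\pmod p$, the integer $a:=v_p(y_m)=v_p(y)$ satisfies $1\le a\le n-1$ and does not depend on $m$ or on $\rho_m$.

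The core computation is the following. Writing $y_m=p^a y_{m,0}$ with $y_{m,0}\in W(\F)^\times$, conjugation of $\left(\begin{smallmatrix}1&y_m\\0&1\end{smallmatrix}\right)$ by $\left(\begin{smallmatrix}1&0\\-p^{m-1-a}y_{m,0}^{-1}&1\end{smallmatrix}\right)$ — which is $\equiv I\pmod p$ since $a\le n-1\le m-2$, and whose off-diagonal cross-terms carry an extra factor $p^a$ and so vanish modulo $p^m$ — returns $\left(\begin{smallmatrix}1+p^{m-1}&y_m\\0&1-p^{m-1}\end{smallmatrix}\right)$, which is exactly $\bigl(1+p^{m-1}\left(\begin{smallmatrix}1&0\\0&-1\end{smallmatrix}\right)\bigr)\left(\begin{smallmatrix}1&y_m\\0&1\end{smallmatrix}\right)$ modulo $p^m$. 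Thus this conjugation changes $\rho_m(\tau_\ell)$ precisely along the class with value $\left(\begin{smallmatrix}1&0\\0&-1\end{smallmatrix}\right)$ at $\tau_\ell$, a value that is already normalised and hence independent of $m$ and $\rho_m$; this is exactly the place where $v_p(y_m)=a$ being constant along $C_\ell$ is used. One then takes $g_0$ to be this elementary matrix multiplied by a correction — an upper unipotent $\left(\begin{smallmatrix}1&*\\0&1\end{smallmatrix}\right)$, which shifts $x_m$ by a multiple of $\ell-1$ and leaves $\rho_m(\tau_\ell)$ fixed, together with a diagonal unit — with the $p$-adic orders of the entries pinned down in terms of $a$, $v_p(x_m)$ and $v_p(\ell-1)$ so that $g_0\equiv I\pmod p$ while the conjugation moves $\rho_m(\sigma_\ell)$ only inside the top layer $p^{m-1}W(\F)/p^m$. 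Reading off the outcome gives the fixed class $v$, whose value at $\tau_\ell$ has the non-zero diagonal part $\left(\begin{smallmatrix}1&0\\0&-1\end{smallmatrix}\right)$; in particular $v\notin\langle u_1,u_2\rangle$, so $N_\ell:=\langle u_1,u_2,v\rangle$ has the required dimension $3=d_1-d_2$, and $(1+p^{m-1}v)\rho_m=g_0\rho_m g_0^{-1}$ is the same deformation as $\rho_m$.

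The routine content is the handful of $2\times2$ matrix manipulations. The genuine obstacle — the hard part — is ensuring that one and the same class $v$ works simultaneously for every $m\ge n+1$ and every reduction $\rho_m$ of an element of $C_\ell$: the $p$-orders in the elementary factors of $g_0$ must be chosen so that $g_0\equiv I\pmod p$ yet the conjugation disturbs $\rho_m(\sigma_\ell)$ and $\rho_m(\tau_\ell)$ only at level $p^{m-1}$, and here one needs the normalisation of the basis of $\rho_n$ (to control $v_p(x)$ against $a=v_p(y)$ and against $v_p(\ell-1)$, using the description of $\coho^1(G_\ell,\Z_p(\chi))$ and its torsion) together with a short case analysis according to how $v_p(x)$, $v_p(y)$ and $v_p(\ell-1)$ compare; in the unfavourable range the transverse component of $v$ has to be carried by the lower-left entry of $v(\sigma_\ell)$ instead of by $v(\tau_\ell)$, exploiting that the diagonal $(\ell,1)$ of $\rho_m(\sigma_\ell)$ is rigid under conjugation while its $(2,1)$-entry is not. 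Once $v$ is fixed, the statement follows at once from $(1+p^{m-1}v)\rho_m=g_0\rho_m g_0^{-1}$ with $g_0\equiv I\pmod p$.
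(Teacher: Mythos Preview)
Your strategy --- realise $v$ via a conjugating matrix $g_0\equiv I\pmod p$ with $g_0\rho_m g_0^{-1}=(1+p^{m-1}v)\rho_m$, with a case split on the relative sizes of $v_p(x),v_p(y),v_p(\ell-1)$ --- is exactly the paper's approach, and your core computation for the case where $v_p(y)$ is minimal is correct. However, the treatment of the remaining cases has a genuine gap. You propose to take $g_0$ as the $y$-calibrated lower unipotent $\left(\begin{smallmatrix}1&0\\-p^{m-1-a}y_{m,0}^{-1}&1\end{smallmatrix}\right)$ and then repair the effect on $\rho_m(\sigma_\ell)$ by multiplying with upper-unipotent and diagonal factors. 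But when $v_p(\ell-1)<a$ (or $v_p(x)<a$), that lower unipotent already disturbs $\rho_m(\sigma_\ell)$ \emph{below} level $p^{m-1}$: the $(2,1)$-entry of the conjugate is $c(\ell-1)$ (up to higher order), of valuation $m-1-a+v_p(\ell-1)<m-1$, and conjugation by upper-unipotent or diagonal matrices $\equiv I\pmod p$ cannot change the valuation of that entry. So the ``correction'' scheme as you describe it cannot succeed.

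The remedy --- and this is what the paper does --- is not to correct the $y$-calibrated conjugation, but to calibrate the lower-unipotent parameter $c$ to whichever of $y$, $x$, $\ell-1$ has the \emph{smallest} valuation. Calibrating to $y$ gives $v$ with diagonal $v(\tau_\ell)$ (the paper's $g_3$, your core case); calibrating to $x$ gives diagonal $v(\sigma_\ell)$ (the paper's $g_2$); calibrating to $\ell-1$ gives $v(\sigma_\ell)$ with a nonzero $(2,1)$-entry (the paper's $g_1$). Ties produce linear combinations, yielding seven cases in total. You correctly anticipate the $g_1$-type outcome but omit the $g_2$-type entirely; consequently your claim that $v(\tau_\ell)$ always carries the nonzero diagonal part is false whenever $v_p(y)$ is not the minimum. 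The remark that ``the diagonal $(\ell,1)$ of $\rho_m(\sigma_\ell)$ is rigid under conjugation'' is also incorrect: it is precisely the lower-unipotent's effect on this diagonal (through the $cx_m$ term) that produces the $g_2$-component.
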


\begin{proof}
 The proof is divided into several cases, we first define $g_1, g_2, g_3 \in \coho^1(G_\ell, \Ad)$ as
 
 \[
g_1(\sigma_\ell) = \begin{pmatrix}
0&0\\
1&0
\end{pmatrix}\hspace{0.2cm} \text{,} \hspace {0.2cm}g_1(\tau_\ell) = \begin{pmatrix}
0&0\\
0&0
\end{pmatrix},\]
\vspace{0,5cm}
 \[
g_2(\sigma_\ell) = \begin{pmatrix}
1&0\\
0&-1
\end{pmatrix}\hspace{0.2cm} \text{,} \hspace {0.2cm}g_2(\tau_\ell) = \begin{pmatrix}
0&0\\
0&0
\end{pmatrix},\]

and

\[
g_3(\sigma_\ell) = \begin{pmatrix}
0&0\\
0&0
\end{pmatrix}\hspace{0.2cm} \text{,} \hspace {0.2cm}g_3(\tau_\ell) = \begin{pmatrix}
1&0\\
0&-1
\end{pmatrix}.\]

We now enumerate a list of cases (depending on the valuations of $x,
y$ and $\ell-1$) and for each of them specify an element $v$ and a
matrix $C$ congruent to the identity modulo $p$ such that
$C^{-1}\rho_m C = (1+p^{m-1})\rho_m$. Write $C = \left(\begin{smallmatrix}
  1+p\alpha&p\beta\\
  p\gamma&1+p\delta \end{smallmatrix}\right)$. In each case we will give the values of $\alpha,
\beta, \gamma$ and $\delta$ and left to the reader to check that
$C^{-1}\rho_m C = (1+p^{m-1})\rho_m$ in each of them.
                                                    
\begin{itemize}
\item If $v_p(y) < v_p(x)$ and $v_p(y) < v_p(\ell-1)$: take $v = g_3$
  and $C$ satisfying $\alpha = \delta$, $\beta = 0$, $\gamma y =
  p^{m-2} \pmod{p^{m-1}}$ and $\gamma x = \gamma(\ell-1) = 0
  \pmod{p^{m-1}}$.  
\item If $v_p(x) < v_p(y)$ and $v_p(x) < v_p(\ell-1)$: take $v = g_2$
  and $C$ satisfying $\alpha = \delta$, $\beta = 0$, $\gamma x =
  p^{m-2} \pmod{p^{m-1}}$ and $\gamma y = \gamma(\ell-1) = 0
  \pmod{p^{m-1}}$.  
\item If $v_p(\ell-1) < v_p(x)$ and $v_p(\ell-1) < v_p(x)$: take $v =
  g_1$ and $C$ satisfying $\alpha = \delta$, $\beta = 0$, $\gamma
  (\ell -1) = -p^{m-2} \pmod{p^{m-1}}$ and $\gamma x = \gamma y = 0
  \pmod{p^{m-1}}$.
 
 \item If $v_p(y) = v_p(\ell-1)$ and $v_p(y) < v_p(x)$:
then $y = \lambda(\ell-1)$. Take $v = g_1 - \lambda g_3$ and $C$ satisfying $\alpha = \delta$, $\beta = 0$, 
 $\gamma (\ell -1) = -p^{m-1} \pmod{p^{m-1}}$ and $\gamma x= 0 \pmod{p^{m-1}}$.
 
\item If $v_p(y) = v_p(x)$ and $v_p(y) < v_p(\ell-1)$: then $y =
  \lambda x$. Take $v = g_2 + \lambda g_3$ and $C$ satisfying $\alpha
  = \delta$, $\beta = 0$, $\gamma x = p^{m-2} \pmod{p^{m-1}}$ and
  $\gamma(\ell -1)= 0 \pmod{p^{m-1}}$.
 
 \item If $v_p(x) = v_p(\ell-1)$ and $v_p(x) < v_p(y)$: then $x = \lambda(\ell-1)$. Take $v = g_1 - \lambda g_2$ and $C$ satisfying $\alpha = \delta$, $\beta = 0$, 
 $\gamma (\ell -1) = -p^{m-2} \pmod{p^{m-1}}$ and $\gamma y= 0 \pmod{p^{m-1}}$.
 
\item If $v_p(x) = v_p(\ell-1) = v_p(y)$: then $x = \lambda_1
  (\ell-1)$ and $y = \lambda_2 (\ell-1)$. Take $v = g_1 - \lambda_1g_2
  - \lambda_2 g_3$ and $C$ satisfying $\alpha = \delta$, $\beta = 0$,
  $\gamma (\ell -1) = -p^{m-2} \pmod{p^{m-1}}$.
\end{itemize}
\end{proof}
We end this case by taking $C_\ell$ as above and $N_\ell = \langle
u_1, u_2, v \rangle$, for the element $v$ of Lemma \ref{lemma:v}.

\item If $\rrho(\sigma_\ell) =\left( \begin{smallmatrix}
\alpha&0\\
0&1\\
\end{smallmatrix}\right)$, with $\alpha\neq 1$,
necessarily $\ell \equiv \alpha \pmod p$ so $d_1 = 3$ and $d_2 = 2$ if $\ell \equiv -1 \pmod{p}$ and $d_1 = 2$ and $d_2 =
1$ otherwise. In both cases, let $u \in \coho^1(G_\ell, \Ad)$ defined by $u(\sigma_{\ell}) = \left(\begin{smallmatrix}
    0&0\\
    0&0 \end{smallmatrix}\right)$
and $u(\tau_\ell) = \left(\begin{smallmatrix}
    0&1\\
    0&0 \end{smallmatrix}\right)$, and take $N_\ell = \<u>$. Define the
set $C_\ell$ of deformations $\rho$ that satisfy
\[
\rho(\sigma_\ell) = \rho_\ell(\sigma_\ell) \hspace{0.5cm} \text{and} \hspace{0.5cm} \rho(\tau_\ell) = \begin{pmatrix}
  1&*\\
  0&1
\end{pmatrix}.
\]
Clearly $N_\ell$ preserves $C_\ell$.
\item If  $\rrho(\sigma_\ell) =\left( \begin{smallmatrix}
1&1\\
0&1\\
\end{smallmatrix}\right),$
necessarily $\ell \equiv 1 \pmod p$, so $d_1 = 2$ and $d_2 = 1$. Let
$u \in \coho^1(G_\ell, \Ad)$ by $u(\sigma_\ell) = 0$ and $u(\tau_\ell)
= \left(\begin{smallmatrix}
    0&1\\
    0&0 \end{smallmatrix}\right)$ and take $N_\ell = \langle u
\rangle$. This subspace preserves the set $C_\ell$ of
deformations $\rho$ satisfying

\[
\rho(\sigma_\ell) = \rho_\ell(\sigma_\ell) \hspace{0.5cm} \text{and} \hspace{0.5cm} \rho(\tau_\ell) = \begin{pmatrix}
 1&*\\
 0&1
\end{pmatrix}.
\]
\end{enumerate}

\begin{remark}
  If we allow ramification in the coefficient field then the cases
  ruled out by Proposition~\ref{prop:integralreductiontypes} may
  happen.  Most of them correspond to cases like the first unramified
  case, where a trick like in \cite{ravihamblen} need to be used. It
  is worth pointing out that in such cases we can construct the
  corresponding sets $C_\ell$ and subspaces $N_\ell$ but the global
  arguments below do not adapt well to that situation. See the remark
  after Lemma~\ref{Lemma:intersec}.
\end{remark}

%

\subsection{The case $\ell = p$}

In this case we will pick $C_p$ exactly as in \cite{ravi3}
(\emph{local at $p$ considerations}), with the observation that in the
supersingular case, it follows from the work done in \cite{ravi1} that
the lifts picked have the same Hodge-Tate weights than $\rho_p$
(which lie in the interval $[0,p-1]$) and are crystalline.
Note that in each case considered by Ramakrishna, $\rho_p$
is always trivially contained in $C_p$.

\section{Auxiliary primes}

For constructing the sets $Q_1$ and $Q_2$ mentioned in the introduction we
will work with primes $q \not\equiv \pm 1 \pmod p$ such that $\rrho$
is not ramified at $q$ and $\rrho(q)$ has different eigenvalues of
ratio $q$, i.e. $\bar{\rho}(\sigma_q) = \left(\begin{smallmatrix} qx &
    0\\ 0 & x\end{smallmatrix}\right)$ and $\bar{\rho}(\tau_q) =
\left(\begin{smallmatrix} 1 & 0\\ 0 & 1\end{smallmatrix}\right)$.
 For these primes the cohomological dimensions are $\dim
\coho^0(G_q,\Ad) = 1$, $\dim \coho^1(G_q,\Ad) = 2$ and $\dim
\coho^2(G_q,\Ad) = 1$.

In this case, the set $C_q$ is
formed by the deformations $\omega$ such that
\begin{equation} \label{aux} \omega(\tau_q) = \begin{pmatrix}
  1&px\\
  0&1\\
  \end{pmatrix} \hspace{0.25cm} \text{and} \hspace{0.25cm}
\omega(\sigma_q) = \begin{pmatrix}
q&py\\
0&1\\
\end{pmatrix}.
\end{equation}
 
These two conditions define a tamely ramified deformation of
$\rrho$. The set $C_q$ is preserved by a subspace $N_q \subseteq
\coho^1(G_q,\Ad)$ of codimension $1$ given by
$j(\sigma_q)=\left(\begin{smallmatrix} 0&0\\0&0\end{smallmatrix}
\right)$ and $j(\tau_q)=\left(\begin{smallmatrix}0 & 1\\ 0 &
    0\end{smallmatrix}\right)$.

There are two main goals we want to achieve in this section. Firstly,
we would like to prove that auxiliary primes do exist for
representations $\rho$ with coefficients in $W(\F)/p^n$. Observe that,
the inductive step depends only on the reduction modulo $p$ of $\rho$,
so we only need to check that once we set the deformation set $C_q$,
whenever we add an auxiliary prime $q$ together with its subspace
$N_q$, the representation $\rho_n|_{G_q}$ is the reduction of some
element in $C_q$, i.e. we want to prove that there are primes $q$ such
that $\rho_n|_{G_q}$ sends a Frobenius and a generator of the tame
inertia to the matrices defined in (\ref{aux}) modulo $p^n$.

Secondly, we need to reprove the properties of the auxiliary primes
we are going to use in our context, although they look similar to the
arguments in \cite{ravi3}.

\subsection{Working modulo $p^n$}

We need to prove that there exist infinitely many auxiliary primes,
that is primes $q$ such that $q \not\equiv \pm1 \pmod p$, $\rho_n$
is unramified at $q$ and $\rho_n(\Frob_q)$ has different eigenvalues of
ratio $q$. 

Following \cite{ravi2} and \cite{ravi3}, let $\mu_p$ be a primitive
$p$-th root of unity, $D = \Q(\Ad)\cap\Q(\mu_p)$, $K =
\Q(\Ad)\Q(\mu_p)$, $D' = \Q(\Add_n)\cap\Q(\mu_p)$ and $K' =
\Q(\Add_n)\Q(\mu_p)$, which fit in the following diagram:

\[
\xymatrix@=1em{
 &K'\ar@{-}[dl]\ar@{-}[dr]&  & & &K\ar@{-}[dl]\ar@{-}[dr]&  \\
\Q(\Add_n)\ar@{-}[dr]&  &\Q(\mu_p)\ar@{-}[dl] & & \Q(\Ad)\ar@{-}[dr]&  &\Q(\mu_p)\ar@{-}[dl]\\
  &D'\ar@{-}[d]  &  & &  &D\ar@{-}[d]  &  \\		
  &\Q  &  & &   &\Q    &                  \\}
\]
Observe that we can translate the conditions on $q$ into the following:
\begin{itemize}
\item the condition $q \not \equiv \pm1 \pmod p$ is equivalent to
  $\Frob_q$ not being the identity nor conjugation in $\Gal(\Q(\mu_p)/\Q)$.
\item $q$ being an auxiliary prime is equivalent to being unramified
  in $\Q(\Add_n)$, $\Frob_q \not \equiv \pm 1 \pmod p$ and $\Frob_q$
  lies in the conjugacy class of an element $\overline{M} \in
  \Img(\Add)_n$, where $M$ is a diagonal matrix with eigenvalues of
  ratio $q$.
\end{itemize}
Therefore, if we prove that there is an element $\sigma \in
\Gal(K'/\Q)$ such that $\sigma|_{\Gal(\Q(\mu_p)/\Q)} = t \neq \pm 1$
and $\sigma|_{\Gal(\Q(\Add_n)/\Q)} = \overline{M}$ where $M$ is
diagonal with eigenvalues of ratio $t$, then we are done
using Chebotarev's Theorem.

\begin{prop}
  There is an element $c=a\times b \in
  \Gal(\Q(\Add_n)/D')\times\Gal(\Q(\mu_p)/D') \simeq \Gal(K'/D')$ such
  that $a$ comes from an element $M \in \Img(\rho_n) \simeq
  \Gal(\Q(\rho_n)/\Q)$ which has different eigenvalues with ratio $b \in
  \F_p^\times \simeq \Gal(\Q(\mu_p)/\Q)$, $b \neq \pm 1$.
\label{prop:elementc}
\end{prop}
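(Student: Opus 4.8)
The plan is to produce $c$ completely explicitly: its $\Q(\mu_p)$--component $b$ will be a suitable square in $\F_p^\times$, and its $\Q(\Add_n)$--component $a$ will be the image of a diagonal element of $\SL_2(W(\F)/p^n)$ whose eigenvalue ratio is $b$. The real content is not the construction but the verification that both components automatically fix $D'$; this is where the hypotheses $p>5$ and big residual image are used.

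First I would upgrade the residual hypothesis to $\SL_2(W(\F)/p^n)\subseteq\Img(\rho_n)$: this is the standard fact that a subgroup of $\SL_2(W(\F)/p^n)$ surjecting onto $\SL_2(\F)$ must be everything, which follows by a Frattini/Nakayama argument from the irreducibility of $\mathfrak{sl}_2(\F)$ as an $\F[\SL_2(\F)]$--module (valid since $p>5$). Next, $Ad^0\rho_n$ has kernel exactly $\rho_n^{-1}(Z)$, with $Z$ the group of scalar matrices, so $\Gal(\Q(\Add_n)/\Q)$ is canonically the image of $\rho_n$ in $\PGL_2(W(\F)/p^n)$; it therefore contains $\PSL_2(W(\F)/p^n)$, which is perfect (again because $p>5$) and satisfies
\[
[\PGL_2(W(\F)/p^n):\PSL_2(W(\F)/p^n)]=[(W(\F)/p^n)^\times:((W(\F)/p^n)^\times)^2]=2,
\]
the last equality because the pro--$p$ part of $(W(\F)/p^n)^\times$ consists of squares ($p$ odd), so the quotient is $\F^\times/(\F^\times)^2\simeq\ZZ/2$. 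Hence $\Gal(\Q(\Add_n)/\Q)^{\mathrm{ab}}$ has order at most $2$. Since $D'/\Q$ is abelian (a subextension of $\Q(\mu_p)/\Q$) and $D'\subseteq\Q(\Add_n)$, the group $\Gal(D'/\Q)$ is a quotient of $\Gal(\Q(\Add_n)/\Q)^{\mathrm{ab}}$, so $[D':\Q]\le 2$; equivalently $\Gal(\Q(\mu_p)/D')$ is a subgroup of $\F_p^\times$ of index at most $2$, hence it contains the group of squares $(\F_p^\times)^2$.

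Now I would pick $b$. As $p>5$ we have $|(\F_p^\times)^2|=(p-1)/2\ge 3$, so this subgroup of the cyclic group $\F_p^\times$ contains an element $b\neq\pm1$. Fix such a $b$, choose $s\in\F_p^\times$ with $s^2=b$, let $\hat s\in(W(\F)/p^n)^\times$ be its Teichm\"uller lift, and put $M=\left(\begin{smallmatrix}\hat s&0\\0&\hat s^{-1}\end{smallmatrix}\right)\in\SL_2(W(\F)/p^n)\subseteq\Img(\rho_n)$. Then $M$ has two distinct eigenvalues (as $\hat s^2=\hat b\neq1$) with ratio $\hat s^2$, which is the Teichm\"uller lift of $b$ and hence identified with $b\in\F_p^\times$. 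Let $a\in\Gal(\Q(\Add_n)/\Q)$ be the image of $M$. Because $M$, viewed in $\Gal(\Q(\rho_n)/\Q)\simeq\Img(\rho_n)$, lies in $\SL_2(W(\F)/p^n)=[\SL_2(W(\F)/p^n),\SL_2(W(\F)/p^n)]\subseteq[\Img(\rho_n),\Img(\rho_n)]$, and the composite $\Gal(\Q(\rho_n)/\Q)\to\Gal(\Q(\Add_n)/\Q)\to\Gal(D'/\Q)$ has abelian target, $a$ fixes $D'$, i.e.\ $a\in\Gal(\Q(\Add_n)/D')$; likewise $b\in(\F_p^\times)^2\subseteq\Gal(\Q(\mu_p)/D')$. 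Therefore $c:=a\times b$ is a well-defined element of $\Gal(\Q(\Add_n)/D')\times\Gal(\Q(\mu_p)/D')\simeq\Gal(K'/D')$ with all the required properties, and Chebotarev applied to any lift of $c$ to $\Gal(K'/\Q)$ produces the desired primes.

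The main obstacle is the control of $D'$: a priori $D'$ could be a large subfield of $\Q(\mu_p)$, leaving $\Gal(\Q(\mu_p)/D')$ too small to contain a square different from $\pm1$. Resolving this is exactly where the mod--$p^n$ big--image upgrade and the bound $p>5$ are essential — for instance if $p=5$ the squares in $\F_5^\times$ are just $\{\pm1\}$ and no diagonal element of $\SL_2$ has eigenvalue ratio in $\F_5^\times\setminus\{\pm1\}$, so the argument genuinely fails there.
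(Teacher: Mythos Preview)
Your proof is correct and follows essentially the same construction as the paper: take a diagonal element $M\in\SL_2(W(\F)/p^n)\subseteq\Img(\rho_n)$ whose eigenvalue ratio is a square $b\in(\F_p^\times)^2\setminus\{\pm1\}$, and set $c=(\overline{M},b)$. The paper separates out the bound $[D':\Q]\le 2$ as a preceding lemma and then writes the construction in two lines, whereas you fold the abelianization argument for $[D':\Q]\le 2$ into the proof and make explicit why $a$ and $b$ each lie in the subgroup fixing $D'$ (via perfectness of $\PSL_2$ and the index--$2$ bound); but the underlying idea and the element produced are the same.
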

The proof is in the spirit of the arguments given in \cite{ravi2} for finding
such elements. Recall the following lemma (Lemma 3, IV-23 in \cite{serreladic} \footnote{Actually, Lemma 3 is stated and proved in \cite{serreladic} for $\F = \F_p$ but the same proof holds for an arbitrary finite field of characteristic $p$.})

\begin{lemma}
 Let $p\geq 5$ and $\F$ a finite field of characteristic $p$. Let $H\subseteq \GL_2(W(\F))$ a closed subgroup and $\overline{H}$ its projection to $\GL_2(\F)$. If
$\SL_2(\F) \subseteq \overline{H}$ then $\SL_2(W(\F)) \subseteq H$.
\end{lemma}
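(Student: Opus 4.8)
The plan is to reduce to the case of $\SL_2$-coefficients and then reproduce, over the general Witt ring $W(\F)$, the filtration argument of Serre (the proof of Lemma~3 in IV-23 of \cite{serreladic}), which is stated there only for $\F=\F_p$. First I would replace $H$ by $H':=H\cap\SL_2(W(\F))$ and check that $H'$ still surjects onto $\SL_2(\F)$: since $|\F|\ge 5$ the group $\SL_2(\F)$ is perfect, so each $g\in\SL_2(\F)$ is a product of commutators; lifting the entries of those commutators to $H$ (possible because $\overline H\supseteq\SL_2(\F)$) and multiplying the resulting commutators produces an element of $H$ of determinant $1$ whose reduction is $g$. So it is enough to show that a closed subgroup $H'\subseteq\SL_2(W(\F))$ with $\overline{H'}=\SL_2(\F)$ must be all of $\SL_2(W(\F))$.

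Next I would introduce the congruence filtration $G_i=\ker\bigl(\SL_2(W(\F))\to\SL_2(W(\F)/p^i)\bigr)$, $i\ge1$. These are open normal subgroups with $\bigcap_iG_i=\{1\}$; the map $I+p^iX\mapsto\overline X$ identifies $G_i/G_{i+1}$ with $\mathfrak{sl}_2(\F)$, and the conjugation action of $\SL_2(W(\F))$ on this quotient factors through $\SL_2(\F)$ acting by the adjoint representation. The heart of the matter is the assertion that $H'\cap G_i$ surjects onto $G_i/G_{i+1}$ for every $i\ge1$; granting this one gets $G_1=(H'\cap G_1)G_i$ for all $i$, so $H'\cap G_1$ is dense in $G_1$ and, being closed, equals $G_1$. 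Then $G_1\subseteq H'$ together with $\overline{H'}=\SL_2(\F)$ forces $H'=\SL_2(W(\F))$, and hence $\SL_2(W(\F))=H'\subseteq H$.

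To start the induction on $i$, I would lift $\left(\begin{smallmatrix}1&1\\0&1\end{smallmatrix}\right)$ to some $\tilde u\in H'$, write $\tilde u=I+B$ with $B\equiv e:=\left(\begin{smallmatrix}0&1\\0&0\end{smallmatrix}\right)\pmod p$, and expand $\tilde u^p=(I+B)^p$. Because $v_p\bigl(\binom pk\bigr)=1$ and $B^k\equiv e^k=0\pmod p$ for $2\le k\le p-1$, and because $p\ge5$ forces $B^p\equiv0\pmod{p^2}$ (every word in $e$ and $pM$, where $B=e+pM$, other than $e^p=0$ either vanishes since $e^2=0$ or carries a $p^2$), one finds $\tilde u^p\equiv I+pe\pmod{p^2}$, with $\tilde u^p\in H'\cap G_1$. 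Thus $e$ lies in $W:=(H'\cap G_1)G_2/G_2$, a subspace of $\mathfrak{sl}_2(\F)$ stable under $\operatorname{Ad}\SL_2(\F)$. Conjugating $e$ by lower unipotents gives elements of the form $-sh+e-s^2f$, hence $h\in W$; conjugating $h$ by upper and lower unipotents gives $h-2se$ and $h+2sf$, so (using $p\ne2$) $\F e$ and $\F f$ lie in $W$; and conjugating $f$ by upper unipotents then forces $\F h\subseteq W$, so $W=\mathfrak{sl}_2(\F)$. For the step $i\to i+1$ I would take $g_1=I+p^iA\in H'\cap G_i$ and $g_2=I+pB\in H'\cap G_1$, compute $[g_1,g_2]\equiv I+p^{i+1}[A,B]\pmod{p^{i+2}}$ with $[g_1,g_2]\in H'\cap G_{i+1}$, and observe that as $\overline A,\overline B$ range over $\mathfrak{sl}_2(\F)$ the brackets $[\overline A,\overline B]$ span $[\mathfrak{sl}_2(\F),\mathfrak{sl}_2(\F)]=\mathfrak{sl}_2(\F)$ (again using $p\ne2$: $[h,e]=2e$, $[h,f]=-2f$, $[e,f]=h$); this yields the claim at level $i+1$.

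The main obstacles are two. First, $p\ge5$ is genuinely used — both in the congruence $\tilde u^p\equiv I+pe\pmod{p^2}$ (the $B^p$ term would spoil this for $p=2,3$) and in the perfectness of $\SL_2(\F)$ and of the Lie algebra $\mathfrak{sl}_2(\F)$. Second, since $\F$ may be strictly larger than $\F_p$, one must take care to fill out the full $\F$-vector space $\mathfrak{sl}_2(\F)$ — not merely the $\F_p$-span of $e,f,h$ — which is why the argument conjugates by the whole unipotent subgroups rather than by a handful of fixed matrices; this is the one point where the proof really differs from the $\F=\F_p$ case treated in \cite{serreladic}.
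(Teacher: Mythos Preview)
The paper does not actually supply a proof of this lemma: it simply recalls it as Lemma~3 of \cite[IV-23]{serreladic}, with a footnote remarking that Serre's argument, stated there for $\F=\F_p$, goes through verbatim for an arbitrary finite field of characteristic~$p$. Your proposal is precisely a detailed write-up of Serre's filtration argument carried out over $W(\F)$, so it is exactly in line with what the paper invokes; you have just filled in what the paper leaves to the reference.

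Your argument is correct. The reduction to $H'=H\cap\SL_2(W(\F))$ via perfectness of $\SL_2(\F)$ (valid since $|\F|\ge5$) is clean; the computation $\tilde u^{\,p}\equiv I+pe\pmod{p^2}$ is right and does require $p\ge5$ as you note; the inductive commutator step uses the standard identity $[I+p^iA,\,I+pB]\equiv I+p^{i+1}[A,B]\pmod{p^{i+2}}$ and the fact that $[\mathfrak{sl}_2(\F),\mathfrak{sl}_2(\F)]=\mathfrak{sl}_2(\F)$ for $p\neq2$. The one place where you genuinely go beyond the $\F=\F_p$ case is, as you correctly flag, in showing that the $\F_p$-subspace $W\subseteq\mathfrak{sl}_2(\F)$ is the whole $\F$-space: your use of conjugation by the full unipotent subgroups (with parameter $s$ ranging over all of $\F$) handles this. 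One small expositional point: when you deduce $h\in W$ from the elements $-sh+e-s^2f$, you are implicitly taking $s\in\F_p^\times$ (so that you may divide by $s$ inside the $\F_p$-subspace $W$) before later upgrading to $\F$-multiples via conjugation of $h$; this is fine, but worth making explicit.
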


This has the following easy consequences:

\begin{coro}
 If $\SL_2(\F) \subseteq \Img(\rrho)$ then $\SL_2(W(\F)/p^n) \subseteq \Img(\rho_n)$.
\label{coro:bigimage}
\end{coro}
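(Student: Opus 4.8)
The plan is to deduce this from the preceding lemma by passing to the preimage in $\GL_2(W(\F))$. Let $\pi_n\colon \GL_2(W(\F))\to \GL_2(W(\F)/p^n)$ denote reduction modulo $p^n$ and set $H=\pi_n^{-1}\bigl(\Img(\rho_n)\bigr)$. Since $\F$ is finite, $W(\F)/p^n$ is a finite ring, so $\Img(\rho_n)$ is a finite --- hence closed --- subgroup of $\GL_2(W(\F)/p^n)$; as $\pi_n$ is a continuous group homomorphism, $H$ is a closed subgroup of $\GL_2(W(\F))$.

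Next I would compute the projection $\overline H$ of $H$ to $\GL_2(\F)$. The residue map $W(\F)\to \F$ factors through $W(\F)/p^n$, so the image of $H$ in $\GL_2(\F)$ is exactly the image of $\Img(\rho_n)$ under $\GL_2(W(\F)/p^n)\to\GL_2(\F)$, that is $\Img(\rrho)$. By hypothesis $\SL_2(\F)\subseteq\Img(\rrho)=\overline H$, so the previous lemma applies and yields $\SL_2(W(\F))\subseteq H$.

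It then remains to reduce modulo $p^n$. From $\SL_2(W(\F))\subseteq H$ and $\pi_n(H)\subseteq\Img(\rho_n)$ we obtain $\pi_n\bigl(\SL_2(W(\F))\bigr)\subseteq\Img(\rho_n)$, so I only need that $\pi_n$ carries $\SL_2(W(\F))$ onto $\SL_2(W(\F)/p^n)$. This is the single point requiring an argument: since $W(\F)/p^n$ is a local ring, $\SL_2(W(\F)/p^n)$ is generated by the elementary matrices $\left(\begin{smallmatrix}1&a\\0&1\end{smallmatrix}\right)$ and $\left(\begin{smallmatrix}1&0\\b&1\end{smallmatrix}\right)$, and each of these is $\pi_n$ of the corresponding elementary matrix over $W(\F)$; hence $\pi_n$ is surjective on $\SL_2$. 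Combining, $\SL_2(W(\F)/p^n)\subseteq\Img(\rho_n)$, which is exactly the assertion. I do not expect a genuine obstacle here: all the real content sits in Serre's lemma quoted above, and the only verification left --- surjectivity of reduction on $\SL_2$ --- is the elementary-matrix remark just made.
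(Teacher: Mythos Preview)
Your argument is correct and follows essentially the same route as the paper: define $H=\pi_n^{-1}(\Img(\rho_n))$, check it is closed, and apply Serre's lemma. The paper's proof is terser (it leaves the computation of $\overline H$ and the surjectivity of $\pi_n$ on $\SL_2$ implicit, and justifies closedness via compactness of $G_\Q$ rather than finiteness of the target), but the content is the same.
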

\begin{proof}
  Denote by $\pi: W(\F) \rightarrow W(\F)/p^n$ the projection, then
  this follows applying the above lemma with $H =
  \pi^{-1}(\Img(\rho_n)) \subseteq W(\F)$ which is closed as $G_\Q$ is
  compact .
\end{proof}


The following lemma gives the existence of the element $c$.

\begin{lemma}
For $D'$ the field defined above, $[D':\Q] \leq 2$. 
\end{lemma}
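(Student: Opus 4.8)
The plan is to bound $[D':\Q]$ by the order of the abelianization of $\Gal(\Q(\Adn)/\Q)\cong\Img(\Adn)$. First observe that $D'$ is, by its very definition, a subfield of $\Q(\mu_p)$, and $\Q(\mu_p)/\Q$ is cyclic, hence $D'/\Q$ is a finite abelian — in particular Galois — extension. Since $D'\subseteq\Q(\Adn)$, restriction gives a surjection $\Img(\Adn)\cong\Gal(\Q(\Adn)/\Q)\twoheadrightarrow\Gal(D'/\Q)$ onto an abelian group, which therefore factors through $\Img(\Adn)^{\mathrm{ab}}$. So it suffices to prove $\#\,\Img(\Adn)^{\mathrm{ab}}\le 2$.

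To do this I would combine the big image hypothesis with a perfectness input. By Corollary~\ref{coro:bigimage} we have $\SL_2(W(\F)/p^n)\subseteq\Img(\rho_n)$, and applying the adjoint map $Ad^0$ (whose kernel is the scalars) this gives $N\subseteq\Img(\Adn)\subseteq\PGL_2(W(\F)/p^n)$, where $N$ denotes the image of $\SL_2(W(\F)/p^n)$ in $\PGL_2(W(\F)/p^n)$. As $\SL_2$ is normal in $\GL_2$, the subgroup $N$ is normal in $\PGL_2(W(\F)/p^n)$, hence in $\Img(\Adn)$. Since $p\ge 5$, the group $\SL_2(W(\F)/p^n)$ is perfect — this is classical: $\SL_2(\F)$ is perfect because $\#\F\ge 4$, and each kernel of $\SL_2(W(\F)/p^{k+1})\to\SL_2(W(\F)/p^{k})$, being isomorphic to the additive group $\mathfrak{sl}_2(\F)$, is generated by commutators of elementary matrices — so the quotient $N$ is perfect as well, giving $N=[N,N]\subseteq[\Img(\Adn),\Img(\Adn)]$. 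Consequently $\Img(\Adn)^{\mathrm{ab}}$ is a quotient of $\Img(\Adn)/N$.

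It remains to bound $\Img(\Adn)/N$, which embeds into $\PGL_2(W(\F)/p^n)/N$. Write $R=W(\F)/p^n$. The preimage of $N$ in $\GL_2(R)$ is $\SL_2(R)\cdot R^\times$ (scalar matrices), and since $\det\colon\GL_2(R)\to R^\times$ is surjective with kernel $\SL_2(R)$, it induces an isomorphism $\PGL_2(R)/N\cong R^\times/(R^\times)^2$. As $p$ is odd, $1+pR$ is a finite $p$-group on which squaring is bijective, so $1+pR=(1+pR)^2$ and hence $R^\times/(R^\times)^2\cong \F^\times/(\F^\times)^2$, a group of order $2$ (since $\#\F$ is odd). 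Therefore $\#\,\Img(\Adn)/N\le 2$, so $\#\,\Img(\Adn)^{\mathrm{ab}}\le 2$, and finally $[D':\Q]\le 2$.

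The one ingredient that is not purely formal is the perfectness of $\SL_2(W(\F)/p^n)$ for $p\ge 5$, which is the step I would single out as the crux; it is standard and is in the same circle of ideas as the hypotheses that make Corollary~\ref{coro:bigimage} available. Everything else is bookkeeping with the short exact sequence $1\to N\to\Img(\Adn)\to\Img(\Adn)/N\to 1$ and the determinant map on $\GL_2(R)$.
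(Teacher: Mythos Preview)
Your proof is correct and self-contained, but it takes a different route from the paper's. The paper argues in two quick steps: since $\Q(\Add_n)/\Q(\Ad)$ has $p$-power degree (the kernel of $\PGL_2(W(\F)/p^n)\to\PGL_2(\F)$ being a $p$-group) while $[\Q(\mu_p):\Q]=p-1$ is prime to $p$, one gets $D'=\Q(\Add_n)\cap\Q(\mu_p)=\Q(\Ad)\cap\Q(\mu_p)=D$; then it simply cites Lemma~18 of \cite{ravi2} for $[D:\Q]\le 2$. That cited lemma is essentially the $n=1$ case of what you prove directly, namely that $\Img(\Ad)^{\mathrm{ab}}$ has order at most $2$ because $\PSL_2(\F)$ is perfect and $[\PGL_2(\F):\PSL_2(\F)]=2$.

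What you do instead is run that abelianization argument directly at level $p^n$: you show $\PSL_2(W(\F)/p^n)$ is perfect and $[\PGL_2(R):\PSL_2(R)]=\#\,R^\times/(R^\times)^2=2$, bypassing the reduction $D'=D$ entirely. Your approach has the merit of being self-contained (no appeal to \cite{ravi2}) and of making the group-theoretic input explicit; the paper's approach is shorter on the page because it outsources the work, and it isolates the conceptually pleasant fact $D'=D$, which is used implicitly elsewhere in the comparison between $c$ and $\tilde c$. The one step you flag --- perfectness of $\SL_2(W(\F)/p^n)$ for $p\ge 5$ --- is indeed standard: since $\F^\times$ contains a unit $u$ with $u^2-1\in R^\times$, every elementary matrix is a commutator, and elementary matrices generate $\SL_2$ over a local ring.
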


\begin{proof}
\label{Dprima}

Observe that $[\Q(\Add_n):\Q(\Ad)] = p^*$ which is coprime with $[\Q(\mu_p):\Q]$. 
This implies that $D' = \Q(\Add_n) \cap \Q(\mu_p) = \Q(\Ad) \cap \Q(\mu_p)$ and 
$[\Q(\Ad) \cap \Q(\mu_p):\Q] = 1$ or $2$ by Lemma $18$ of \cite{ravi2}

%
\end{proof}
\begin{proof}[Proof of Proposition~\ref{prop:elementc}:]
  Let $b \in \F_p^\times \subseteq \F^\times$ be any element such that
  $b^2 \neq \pm 1$. Let $\tilde{b} \in \set{1,\cdots,p-1} \subseteq
  W(\F)/p^n$ be congruent to $b$ modulo $p$ and $M
  = \begin{pmatrix}
    \tilde{x}&0\\
    0&\tilde{x}^{-1}\\
  \end{pmatrix} \in \SL_2(W(\F)/p^n) \subseteq \Img(\rho_n)$. Then $c
  = (\overline{M},b^2) \in \Gal(\Q(\Add)/D')\times\Gal(\Q(\mu_p)/D')$
  is such an element.
\end{proof}

\begin{remark}
  The element $c$ constructed in Proposition~\ref{prop:elementc} is
  not the same as the one in \cite{ravi2}. In fact they live in
  different Galois groups, the first one lying in $\Gal(K'/\Q)$ and
  the second one in $\Gal(K/\Q)$.  However, it is true that the
  projection of the element constructed in this work through the map $\Gal(K'/\Q) 
  \to \Gal(K/\Q)$ is an element like the one defined by Ramakrishna. 
  In particular, both elements act in the same way on $\Ad$ (as the
  action of our $c$ is through this projection). To avoid confusion
  we denote the projection by $\tilde{c}$ .
\end{remark}

Any prime $q$ not ramified in $K'$ such that $\Frob_q$ lies in the
conjugacy class of $c$ can be taken as an auxiliary prime. In the
next subsection we are going to impose extra conditions at the
auxiliary primes regarding their interaction with elements of
$\coho^1(G_\Q, \Ad)$ and $\coho^2(G_\Q,\Ad)$.

\subsection{Properties of auxiliary primes}

We need to impose conditions to the auxiliary primes similar to the ones in Fact
16 and Lemma 14 of \cite{ravi3}. Concretely, for non-zero elements $f
\in \coho^1(G_P,\Ad)$ and $g \in \coho^1(G_P,(\Ad)^*)$, the auxiliary prime
$q$ should satisfy $f|_{G_q} = 0$ or $f|_{G_q} \notin N_q$ and
$g|_{G_q}\neq0$. We need to impose these conditions for many
elements at the same time.

If $f \in \coho^1(G_P,\Ad)$, then $f|_{\Gal(\overline{\Q}/\Q(\Ad))}$ is a
morphism, so we can associate an extension $\widetilde{L_f}/\Q(\Ad)$
fixed by its kernel. Also let $L_f = \widetilde{L_f}K =
\widetilde{L_f}(\mu_p)$.  Analogously, for $g \in \coho^1(G_P,(\Ad)^*)$ we
define $M_g/\Q((\Ad)^*)$ as the fixed field by the kernel of
$g|_{\Gal(\overline{\Q}/\Q((\Ad)^*))}$.  Notice that we can obtain
information about $f|_{G_q}$ or $g|_{G_q}$ by looking at the conjugacy
class of $\Frob_q$ in $\Gal(L_f/\Q)$ or $\Gal(M_g/\Q)$ (as these are
almost the extensions associated to the adjoint representation of
$\rrho(Id + \epsilon f)$).

Let $f_1, \ldots, f_{r_1}$ and $g_1, \ldots, g_{r_2}$ basis for
$\coho^1(G_P, \Ad)$ and $\coho^1(G_P, (\Ad)^*)$ respectively. Define $L$ to be
the composition of the fields $L_{f_i}$, $M$ the composition of the
$M_{g_j}$, and $F = LM$. The following lemma is a summary of results
about these extensions from \cite{ravi2}.

\begin{lemma} Let $f_i$ and $g_j$ as above.

\begin{enumerate}
\item For every $f_i$, $\Gal(L_{f_i}/K) \simeq \Ad$ as $G_\Q$-modules,
  and for every $g_j$, $\Gal(M_{g_j}/K) \simeq (\Ad)^*$.
\item $\Gal(L/K) \simeq \prod \Gal(L_{f_i}/K) \simeq (\Ad)^{r_1}$ and
  $\Gal(M/K) \simeq \prod \Gal(M_{g_j}/K) \simeq
  ((\Ad)^*)^{r_2}$. Also $M \cap L = K$ so $\Gal(F/K) \simeq
  \Gal(L/K)\times\Gal(M/K)$.
\item The exact sequences
\[
1\longrightarrow \Gal(L/K) \longrightarrow \Gal(L/\Q) \longrightarrow \Gal(K/\Q) \longrightarrow 1,
\]
and
\[
1\longrightarrow \Gal(M/K) \longrightarrow \Gal(M/\Q) \longrightarrow \Gal(K/\Q) \longrightarrow 1,
\]
both split, hence $\Gal(F/\Q) \simeq \Gal(F/K)\rtimes \Gal(K/\Q)$.
\end{enumerate}
\end{lemma}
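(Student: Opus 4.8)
The plan is to reprove, with coefficients in the possibly non-prime finite field $\F$, the statements collected in this lemma from \cite{ravi2}. The only ingredients are representation-theoretic and all survive the change of base: $\Ad$ and $(\Ad)^*$ are absolutely irreducible $G_\Q$-modules, $\End_{G_\Q}(\Ad)=\F$, $\Ad\not\simeq(\Ad)^*$, and $\coho^0$ and $\coho^1$ of $\Gal(\Q(\Ad)/\Q)$ (resp.\ of $\Gal(K/\Q)$) with coefficients in $\Ad$ and $(\Ad)^*$ vanish --- these hold for $p\ge 5$ exactly as over $\F_p$, each being a statement about $\SL_2(\F)$ insensitive to enlarging the field (compare the footnote to Serre's Lemma~3 above). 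For the non-isomorphism $\Ad\not\simeq(\Ad)^*$ one uses the trace form to write $(\Ad)^*\simeq\Ad\otimes\overline{\chi}$ and then observes that $\Ad\simeq\Ad\otimes\overline{\chi}$ would force $\overline{\chi}$ to occur as a character inside $\End_\F(\Ad)$; since $\PSL_2(\F)$ is perfect any such character is trivial on $\PSL_2(\F)$, and since $\End_{G_\Q}(\Ad)=\F$ the trivial isotypic piece of $\End_\F(\Ad)$ is just the line $\F\cdot\mathrm{id}$, which is $G_\Q$-fixed, so the only character occurring is trivial --- contradicting $\overline{\chi}\ne 1$. I also use throughout that $\Q(\Ad)\subseteq\Q_P$ after the twisting normalisation, that $[\widetilde{L_{f_i}}:\Q(\Ad)]$ is a power of $p$ while $[K:\Q(\Ad)]\mid p-1$, and that $\Q((\Ad)^*)=K$.

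For part (1): $G_{\Q(\Ad)}$ acts trivially on $\Ad$, so $f_i|_{G_{\Q(\Ad)}}$ is a homomorphism, and the cocycle identity $f_i(\gamma\delta\gamma^{-1})=\gamma\cdot f_i(\delta)$ (for $\delta\in G_{\Q(\Ad)}$, $\gamma\in G_\Q$) shows its image is a $G_\Q$-submodule of $\Ad$, necessarily all of $\Ad$ by irreducibility since the restriction $\coho^1(G_P,\Ad)\to\coho^1(G_{\Q(\Ad)},\Ad)$ is injective (its kernel $\coho^1(\Gal(\Q(\Ad)/\Q),\Ad)$ vanishes) and the $f_i$ are nonzero. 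Hence $\Gal(\widetilde{L_{f_i}}/\Q(\Ad))\simeq\Ad$ as $G_\Q$-modules, and because the degrees $[\widetilde{L_{f_i}}:\Q(\Ad)]$ and $[K:\Q(\Ad)]$ are coprime, $\widetilde{L_{f_i}}\cap K=\Q(\Ad)$, giving $\Gal(L_{f_i}/K)\simeq\Ad$. The argument for the $g_j$ is identical with $(\Ad)^*$ in place of $\Ad$ and $K=\Q((\Ad)^*)$ in place of $\Q(\Ad)$.

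For part (2): the injection $\Gal(L/K)\hookrightarrow\prod_i\Gal(L_{f_i}/K)\simeq\Ad^{r_1}$ is onto each coordinate, so if it were not surjective, semisimplicity of $\Ad^{r_1}$ would yield a nonzero $G_\Q$-linear map $(c_1,\dots,c_{r_1})\colon\Ad^{r_1}\to\Ad$ with each $c_i\in\End_{G_\Q}(\Ad)=\F$ vanishing on the image; this says $\sum_i c_if_i$ dies in $\coho^1(G_K,\Ad)$, hence --- peeling off the two kernels $\coho^1(\Gal(K/\Q(\Ad)),\Ad)=0$ (prime-to-$p$ group, $p$-torsion module) and $\coho^1(\Gal(\Q(\Ad)/\Q),\Ad)=0$ --- also in $\coho^1(G_P,\Ad)$, contradicting linear independence. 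So $\Gal(L/K)\simeq\Ad^{r_1}$, and symmetrically $\Gal(M/K)\simeq((\Ad)^*)^{r_2}$. Now $\Gal((L\cap M)/K)$ is a common quotient of these two semisimple modules, hence (as $\Ad\not\simeq(\Ad)^*$) trivial; thus $L\cap M=K$ and $\Gal(F/K)\simeq\Gal(L/K)\times\Gal(M/K)$.

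For part (3): the family $(f_1,\dots,f_{r_1})$ is a $1$-cocycle $G_\Q\to\Ad^{r_1}$ for the $\Gal(K/\Q)$-action (the action on $\Ad$ factors through the quotient $\Gal(\Q(\Ad)/\Q)$ of $\Gal(K/\Q)$), so $g\mapsto\big((f_1(g),\dots,f_{r_1}(g)),\,g|_K\big)$ is a homomorphism $G_\Q\to\Ad^{r_1}\rtimes\Gal(K/\Q)$; its image surjects onto $\Gal(K/\Q)$ and contains $\Ad^{r_1}\rtimes\{1\}$ (by part (2)), hence is everything, and its kernel is exactly the subgroup cutting out $L$. Therefore $\Gal(L/\Q)\simeq\Ad^{r_1}\rtimes\Gal(K/\Q)$, i.e.\ the first sequence splits, and the second splits the same way with the $g_j$ and $(\Ad)^*$. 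Finally, since $L\cap M=K$ and $F=LM$, $\Gal(F/\Q)$ is the fibre product of $\Gal(L/\Q)$ and $\Gal(M/\Q)$ over $\Gal(K/\Q)$, and the two sections combine into a section over $\Gal(K/\Q)$, giving $\Gal(F/\Q)\simeq\Gal(F/K)\rtimes\Gal(K/\Q)$. The step that really requires attention is part (2): one must check that each representation-theoretic input above ($\Ad$ and $(\Ad)^*$ absolutely irreducible, $\End_{G_\Q}(\Ad)=\F$, $\Ad\not\simeq(\Ad)^*$, and the vanishing of $\coho^{\le 1}$ of the relevant finite groups in $\Ad$ and $(\Ad)^*$) goes through over a general finite field $\F$ of characteristic $p\ge 5$, which it does by the same arguments as over $\F_p$; everything else is formal Galois theory plus the observation that a system of $1$-cocycles tautologically realises as a semidirect-product structure on the Galois group it cuts out.
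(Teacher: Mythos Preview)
Your proof is correct and fleshes out in full detail what the paper simply cites from \cite{ravi2} (Lemmas~9, 11, 13 there); the underlying arguments are the same ones Ramakrishna uses, so the approaches coincide in substance. One small imprecision: in your argument for $\Ad\not\simeq(\Ad)^*$, after observing that the self-twist character is trivial on $\PSL_2(\F)$, what you actually need is $\End_{\PSL_2(\F)}(\Ad)=\F$ (absolute irreducibility of $\Ad$ as a $\PSL_2(\F)$-module under the big image hypothesis), not merely $\End_{G_\Q}(\Ad)=\F$; the conclusion is unaffected, but the justification should invoke the stronger statement --- alternatively, the determinant trick ($\Ad\simeq\Ad\otimes\overline{\chi}$ forces $\overline{\chi}^3=1$, impossible for $p\ge 5$) gives a cleaner one-line proof.
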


\begin{proof}
  The first claim is Lemma $9$, the second is Lemma $11$ and the last
  one is Lemma $13$ of \cite{ravi2} with two remarks:
  \begin{itemize}
  \item In \cite{ravi2} these results are proved for the
    representation $\Adr$, which is the descent of $\Ad$ to its
    minimal field of definition. As we are assuming that $\SL_2(\F)
    \subseteq \Img(\rrho)$, we have that $\Ad$ is already defined in
    its minimal field of definition, because of Lemma $17$ of \cite{ravi2}.
  \item In \cite{ravi2} these lemmas are proved for $P = S$ the set of
    ramification of $\Ad$, but the same proofs work for any $P
    \supseteq S$.
  \end{itemize}
\end{proof}

Finally, we can read properties of $f|_{G_q} \in \coho^1(G_q, \Ad)$ from
the class of $\Frob_q$ in $\Gal(L_f/\Q) \simeq \Gal(L_f/K) \rtimes
\Gal(K/\Q)$. Observe that the element $c \in \Gal(K'/\Q)$ constructed
in the previous section acts on $\Ad$ through the projection to
$\Gal(\Q(\Ad)/\Q)$.

\begin{prop} Let $q \in \Q$ be a prime, $f\in \coho^1(G_P, \Ad)$ and $g
  \in \coho^1(G_P, (\Ad)^*)$.
 \begin{enumerate}
 \item If $\Frob_q$ lies in the conjugacy class of $1 \rtimes
   \tilde{c} \in \Gal(L_f/\Q)$
   then $f|_{G_q} = 0$. The same holds for $g$ and $\Gal(M_g/\Q)$.
 
 \item There are nontrivial elements $\alpha \in \Ad$ on which $c$
   acts trivially and if $\Frob_q$ lies in the conjugacy class of
   $\alpha \rtimes \tilde{c} \in \Gal(L_f/\Q)$ then $f|_{G_q} \notin
   N_q$.
 
 \item There are nontrivial elements $\beta \in (\Ad)^*$ on which $c$
   acts trivially and if $\Frob_q$ lies in the conjugacy class of
   $\beta \rtimes \tilde{c} \in \Gal(M_g/\Q)$ then $g|_{G_q} \neq 0$.
 \end{enumerate}
\label{prop:auxiliaryprimes}
\end{prop}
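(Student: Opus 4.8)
The plan is to follow the proofs of Fact~$16$ and Lemma~$14$ of \cite{ravi3} (compare \cite{ravi2}); the only genuinely new ingredient is the bookkeeping with the mod $p^n$ datum, already isolated in the previous subsection. The first step is to describe the extension $L_f$ explicitly: the assignment $\sigma\mapsto\bigl(f(\sigma),\sigma|_K\bigr)$ defines a homomorphism $\Gal(L_f/\Q)\to\Ad\rtimes\Gal(K/\Q)$ — well defined because $f$ is a cocycle and the $G_\Q$-action on $\Ad$ factors through $\Gal(\Q(\Ad)/\Q)\subseteq\Gal(K/\Q)$ — and the big image hypothesis together with the preceding Lemma shows it is an isomorphism; the analogue holds for $M_g$ with $\Ad$ replaced by $(\Ad)^{*}$. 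If now $q$ is a prime produced by Chebotarev with $\Frob_q$ in a prescribed conjugacy class of $\Gal(L_f/\Q)$, then $q$ is unramified in $L_f$, so $I_q$ lies in the kernel of $f|_{\Gal(\overline{\Q}/\Q(\Ad))}$, whence $f|_{I_q}=0$ and $f|_{G_q}$ is the unramified class represented by $f(\sigma_q)\in\Ad$ (modulo $(\Frob_q-1)\Ad$) for a Frobenius lift $\sigma_q$, and under the identification above $f(\sigma_q)$ is precisely the $\Ad$-component of the image of $\sigma_q$. The same applies to $g$.

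Part $(1)$ is then immediate: if $\sigma_q\mapsto 1\rtimes\tilde c$ then $f(\sigma_q)=0$, and since $f|_{I_q}=0$ as well, the cocycle $f|_{G_q}$ vanishes; likewise $g|_{G_q}=0$.

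For $(2)$ and $(3)$ I would spell out the action of $c$. By Proposition~\ref{prop:elementc} it acts on $\Ad$ through $\tilde c$ as conjugation by $\left(\begin{smallmatrix}b&0\\0&b^{-1}\end{smallmatrix}\right)$ with $b^{2}\neq\pm1$, and on $\F(\chi)$ by $b^{2}$. In the eigenbasis $\left(\begin{smallmatrix}0&1\\0&0\end{smallmatrix}\right),\ \left(\begin{smallmatrix}0&0\\1&0\end{smallmatrix}\right),\ \left(\begin{smallmatrix}1&0\\0&-1\end{smallmatrix}\right)$ the eigenvalues of $c$ are $b^{2},b^{-2},1$ on $\Ad$ and $b^{4},1,b^{2}$ on $(\Ad)^{*}\simeq\Ad\otimes\chi$; since $b^{2}\neq\pm1$ also forces $b^{4}\neq1$, the unique $c$-fixed line of $\Ad$ is spanned by $\alpha=\left(\begin{smallmatrix}1&0\\0&-1\end{smallmatrix}\right)$ and that of $(\Ad)^{*}$ by $\beta=\left(\begin{smallmatrix}0&0\\1&0\end{smallmatrix}\right)\otimes\chi$, both nonzero; these are the elements of the statement. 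If $\Frob_q$ lies in the class of $\alpha\rtimes\tilde c$, then $f|_{G_q}$ is the unramified class represented by $\alpha$; as $\rrho(\sigma_q)$ has eigenvalues of ratio $q\equiv b^{2}\not\equiv1\pmod p$, the adjoint action of $\sigma_q$ fixes $\alpha$ and scales the remaining two basis vectors by $b^{\pm2}\neq1$, so $(\Frob_q-1)\Ad=\langle\left(\begin{smallmatrix}0&1\\0&0\end{smallmatrix}\right),\left(\begin{smallmatrix}0&0\\1&0\end{smallmatrix}\right)\rangle$ does not contain $\alpha$ and $f|_{G_q}\neq0$; since moreover $N_q=\langle j\rangle$ with $j(\tau_q)\neq0$ consists of ramified classes, the unramified nonzero class $f|_{G_q}$ lies outside $N_q$. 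For $(3)$, $\beta$ spans the $\Frob_q$-invariants of $(\Ad)^{*}$ and $\sigma_q$ acts semisimply, so $\beta\neq0$ in $(\Ad)^{*}/(\Frob_q-1)(\Ad)^{*}$ and the unramified class $g|_{G_q}$ represented by $\beta$ is nonzero.

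The delicate point is matching these conjugacy classes in $\Gal(L_f/\Q)$ and $\Gal(M_g/\Q)$ with the class $c$ constructed in $\Gal(K'/\Q)$, and doing so simultaneously for an entire basis $\{f_i\},\{g_j\}$: this is exactly the role of the fields $L=\prod L_{f_i}$, $M=\prod M_{g_j}$ and $F=LM$, and the simultaneous application of Chebotarev will be carried out in $\Gal(F/\Q)\simeq\Gal(F/K)\rtimes\Gal(K/\Q)$ in the next section. Since $\Ad$ and $(\Ad)^{*}$ are $\F$-modules on which $c$ acts only through its image $\tilde c$, every representation-theoretic computation above coincides with the residual one of \cite{ravi2,ravi3}; the mod $p^n$ hypothesis enters only through the production of the auxiliary primes over $c$ done in the previous subsection. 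Hence the argument reduces to that of loc.\ cit.
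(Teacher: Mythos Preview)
Your proof is correct and follows essentially the same approach as the paper's, which simply defers to Lemmas~14--16 and Corollaries~1--2 of \cite{ravi2} with the remark that here $\Ad=\Adr$ makes the existence of $\alpha,\beta$ trivial. You have spelled out precisely those arguments: the explicit semidirect-product description of $\Gal(L_f/\Q)$ via $\sigma\mapsto(f(\sigma),\sigma|_K)$, the eigenvalue computation for $\tilde c$ on $\Ad$ and $(\Ad)^*$, and the observation that $N_q$ is spanned by a ramified class while $f|_{G_q}$ is unramified, so a nonzero unramified class automatically lies outside $N_q$.
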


\begin{proof}
  See Lemmas 14, 15 and 16, and Corollaries 1 and 2 of \cite{ravi2},
  noting that in our setting $\Ad = \Adr$, so the proof of the
  existence of $\alpha$ and $\beta$ is almost trivial.
\end{proof}

\begin{coro}
  There exists primes $q$ such that $\rrho(\Frob_q)$ has different
  eigenvalues of ratio $q$ and such that for the basis elements any of
  the following conditions can be achieved: $f_i|_{G_q} = 0$ or
  $f_i|_{G_q} \notin N_q$ and $g_j|_{G_q} = 0$ or $g_j|_{G_q} \neq 0$.
\label{coro:setQ}
\end{coro}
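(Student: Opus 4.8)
The plan is to realize any prescribed choice of the two alternatives --- one for each basis vector $f_i$ and each $g_j$ --- as a single conjugacy class in a finite Galois extension of $\Q$, and then conclude by Chebotarev's density theorem. Fix, once and for all, for each $i$ a target $a_i\in\Ad$ equal either to $0$ or to the $c$-invariant vector $\alpha$ of Proposition~\ref{prop:auxiliaryprimes}(2), and for each $j$ a target $b_j\in(\Ad)^*$ equal either to $0$ or to the $c$-invariant vector $\beta$ of Proposition~\ref{prop:auxiliaryprimes}(3); these choices record which of the two alternatives we want to force for $f_i$ and for $g_j$.

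Using part~(3) of the Lemma, $\Gal(F/\Q)\simeq\Gal(F/K)\rtimes\Gal(K/\Q)$, and using part~(2), $\Gal(F/K)\simeq\prod_i\Gal(L_{f_i}/K)\times\prod_j\Gal(M_{g_j}/K)\simeq(\Ad)^{r_1}\times((\Ad)^*)^{r_2}$, where the $i$-th (resp.\ $j$-th) factor is cut out by $L_{f_i}$ (resp.\ $M_{g_j}$). Hence the element
\[
\gamma\;=\;\bigl((a_i)_i,(b_j)_j\bigr)\rtimes\tilde{c}\;\in\;\Gal(F/\Q)
\]
restricts to $a_i\rtimes\tilde{c}$ on each $\Gal(L_{f_i}/\Q)$, to $b_j\rtimes\tilde{c}$ on each $\Gal(M_{g_j}/\Q)$, and to $\tilde{c}$ on $\Gal(K/\Q)$; the $c$-invariance of the $a_i$, $b_j$ is exactly what makes these the conjugacy classes appearing in Proposition~\ref{prop:auxiliaryprimes}. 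To also pin down the ramification at $q$ and the eigenvalue ratio of $\rho_n(\Frob_q)$, I would pass to $F'=F\cdot K'$, which contains $\Q(\Add_n)$. Since $\gamma$ and the element $c\in\Gal(K'/\Q)$ of Proposition~\ref{prop:elementc} agree on $\Gal(K/\Q)$, they glue to a single $\gamma'\in\Gal(F'/\Q)$ lying above both, provided $F$ and $K'$ are linearly disjoint over $K$.

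Finally, apply Chebotarev's density theorem to $\Gal(F'/\Q)$: there are infinitely many primes $q$, unramified in $F'$, with $\Frob_q$ in the conjugacy class of $\gamma'$. For such $q$ the image of $\Frob_q$ in $\Gal(K'/\Q)$ is $c$, which (as in the first subsection and Proposition~\ref{prop:elementc}) forces $q\not\equiv\pm1\pmod p$, $\rho_n$ unramified at $q$, and $\rrho(\Frob_q)$ conjugate to a diagonal matrix with eigenvalues of ratio $q$; and the image of $\Frob_q$ in $\Gal(L_{f_i}/\Q)$ (resp.\ $\Gal(M_{g_j}/\Q)$) is $a_i\rtimes\tilde{c}$ (resp.\ $b_j\rtimes\tilde{c}$), so Proposition~\ref{prop:auxiliaryprimes} gives $f_i|_{G_q}=0$ or $f_i|_{G_q}\notin N_q$ (resp.\ $g_j|_{G_q}=0$ or $g_j|_{G_q}\neq0$) exactly according to the choice of $a_i$ (resp.\ $b_j$). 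As the choice of alternatives was arbitrary, every combination is attainable. The step I expect to need genuine care is the linear disjointness of $F$ and $K'$ over $K$ used to produce $\gamma'$ --- this is where the bigness of $\Img(\rho_n)$ from Corollary~\ref{coro:bigimage} is used, together with the identity $M\cap L=K$ already recorded in the Lemma --- while everything else is a formal consequence of Proposition~\ref{prop:auxiliaryprimes} and parallels Fact~16 and Lemmas~14--16 of \cite{ravi3,ravi2}.
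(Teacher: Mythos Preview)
Your argument is correct and follows the same strategy as the paper: build an element in the semidirect-product Galois group whose coordinates are $0$, $\alpha$, or $\beta$ according to the desired alternative, then apply Chebotarev and read off the local behaviour via Proposition~\ref{prop:auxiliaryprimes}. The paper's proof constructs exactly your element $\gamma=\omega\rtimes\tilde c$ in $\Gal(F/\Q)$ and stops there.

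The one place you diverge from the paper is that you do \emph{more} than the statement asks. The corollary only concerns $\rrho$, and the condition ``$\rrho(\Frob_q)$ has eigenvalues of ratio $q$'' is already guaranteed by $\Frob_q\mapsto\tilde c$ in $\Gal(K/\Q)$; so Chebotarev over $F$ alone suffices, and no linear-disjointness issue arises at this point. Your passage to $F'=F\cdot K'$ and the accompanying concern about $K'\cap F=K$ are exactly what the paper does \emph{after} the corollary, in order to upgrade the conclusion from $\rrho$ to $\rho_n$; that intersection is then handled separately as Lemma~\ref{Lemma:intersec}. So your instinct about where the real work lies is right, but it belongs to the next step rather than to this corollary.
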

\begin{proof}
Pick an element
\[
\Omega = \omega \rtimes \tilde{c} \in \Gal(F/\Q) \simeq \left(\prod_{i=1}^{r_1} \Gal(L_{f_i}/\Q) \times \prod_{j=1}^{r_2} \Gal(M_{g_j}/\Q)\right) \rtimes \Gal(K/\Q),
\]
where $\omega$ has coordinates $0$ or $\alpha$ whether we want $f_i|_{G_q}$ to be $0$ or not in $N_q$ in the first product and $0$ or $\beta$ 
whether we want $g_j|_{G_q}$ to be $0$ or not $0$ in the second one. Then any $q$ such that $\Frob_q$ lies in the conjugacy class of $\Omega$ works.
\end{proof}

We want the same to hold for $\rho_n$, i.e. to find primes $q$
satisfying the same conditions plus $\rho_n(\Frob_q)$ to have
different eigenvalues of ratio $q$. As we mentioned before, any $q$
such that $\Frob_q \in \Gal(K'/\Q)$ lies in the conjugacy class of $c$
satisfies this extra condition. Therefore, we only need to check that
there is an element $\theta$ in $\Gal(K'F/\Q)$ such that $\theta|_{K'}
= c$ and $\theta|_{F} = \Omega$.

Observe that $\Omega|_K = \tilde{c} = c|_K$, a necessary condition. It
is enough to prove that $K' \cap F = K$, as any pair of
elements in $\Gal(K'/\Q)$ and $\Gal(F/\Q)$ that are equal when
restricted to $K'\cap F$ define an element in $\Gal(K'F/\Q)$.
In order to prove this, we need the following lemma.

\begin{lemma}
\label{Lemma:intersec}
$K'\cap F = K$.
\end{lemma}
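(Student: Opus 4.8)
The plan is to reduce the claim to the vanishing of a Galois cohomology group and then argue by contradiction. First note that $\Q(\Add_n)$, $\Q(\mu_p)$ and $F$ are all Galois over $\Q$: for $F$, each $\widetilde{L_{f_i}}$ is the fixed field of $\ker\!\left(f_i|_{G_{\Q(\Ad)}}\right)$, which the cocycle relation shows is a normal subgroup of $G_\Q$, and likewise for the $M_{g_j}$. Since $\Q(\mu_p)\subseteq K\subseteq F$ and $\Q(\mu_p)/\Q$ is Galois, a standard Galois-theoretic identity gives $K'\cap F=(\Q(\Add_n)\cap F)\,\Q(\mu_p)$, so it is enough to prove $\Q(\Add_n)\cap F=\Q(\Ad)$.

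The main input --- the step I expect to be the real obstacle --- is the vanishing $\coho^1(\Gal(\Q(\Add_n)/\Q),\Ad)=0$. By Corollary~\ref{coro:bigimage}, $\Img(\Add_n)=\Gal(\Q(\Add_n)/\Q)$ contains $\PSL_2(W(\F)/p^n)$ as a normal subgroup of index dividing $2$, so by restriction--corestriction it suffices to check $\coho^1(\PSL_2(W(\F)/p^n),\Ad)=0$; this follows from inflation--restriction along the congruence sequence $1\to N\to\PSL_2(W(\F)/p^n)\to\PSL_2(\F)\to1$, using the classical facts that $\coho^1(\PSL_2(\F),\Ad)=0$ for $p\ge5$, that $N^{\mathrm{ab}}\otimes\F_p\simeq\mathfrak{pgl}_2(\F)\simeq\Ad$ as $\PSL_2(\F)$-modules (here $p$ odd is used), and that the extension $\PSL_2(W(\F)/p^2)$ of $\PSL_2(\F)$ by $\Ad$ is \emph{nonsplit}, so that the transgression $\coho^1(N,\Ad)^{\PSL_2(\F)}\to\coho^2(\PSL_2(\F),\Ad)$ is injective. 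The same argument gives $\coho^1(\Gal(\Q(\Ad)/\Q),\Ad)=0$. Since $\Add_n$ is unramified outside $S\subseteq P$ we have $\Q(\Add_n)\subseteq\Q_P$, so inflation--restriction yields that restriction induces injections $\coho^1(G_P,\Ad)\hookrightarrow\coho^1(G_{\Q(\Add_n)},\Ad)$ and $\coho^1(G_P,\Ad)\hookrightarrow\Hom(G_{\Q(\Ad)},\Ad)$; in particular the classes $f_i|_{G_{\Q(\Ad)}}$ are linearly independent, so, writing $\widetilde L=\prod_i\widetilde{L_{f_i}}$, one has $\Gal(\widetilde L/\Q(\Ad))\simeq(\Ad)^{r_1}$ with $G_{\Q(\Ad)}$ acting through $(f_i|_{G_{\Q(\Ad)}})_i$.

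Finally I would suppose, for contradiction, that $\Q(\Add_n)\cap F\supsetneq\Q(\Ad)$. This field being Galois over $\Q$, it contains a subextension $E_0/\Q(\Ad)$ that is Galois over $\Q$ and simple as a $G_\Q$-module; since $\Gal(\Q(\Add_n)/\Q(\Ad))$ is a pro-$p$ group whose $G_\Q$-composition factors are all $\simeq\mathfrak{pgl}_2(\F)\simeq\Ad$, we get $\Gal(E_0/\Q(\Ad))\simeq\Ad$. As $E_0/\Q(\Ad)$ is a $p$-extension it is linearly disjoint from $\Q(\mu_p)$ over $D$, hence $E_0K/K\simeq\Ad$; since $(\Ad)^*\not\simeq\Ad$ as $\Gal(K/\Q)$-modules (they differ by the nontrivial mod-$p$ cyclotomic character, using $p>3$), the extension $E_0K$ lies inside the ``$\Ad$-part'' $L$ of $F$, and, $\widetilde L$ being the fixed field of the unique Hall $p'$-subgroup of $\Gal(L/\Q(\Ad))$, in fact $E_0\subseteq\widetilde L$. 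Then $E_0$ corresponds to a nonzero tuple $(a_i)$ under $\Gal(\widetilde L/\Q(\Ad))\simeq(\Ad)^{r_1}$, i.e. $E_0=\widetilde{L_f}$ for $f=\sum_i a_i f_i\in\coho^1(G_P,\Ad)$ with $f\neq0$. But $E_0\subseteq\Q(\Add_n)$ forces $f|_{G_{\Q(\Add_n)}}=0$, contradicting the injectivity established above. Hence $\Q(\Add_n)\cap F=\Q(\Ad)$, and therefore $K'\cap F=K$.
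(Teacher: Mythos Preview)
Your argument is correct and rests on the same core fact as the paper's proof: the extension
\[
1 \longrightarrow \Ad \longrightarrow \PSL_2(W(\F)/p^2) \longrightarrow \PSL_2(\F) \longrightarrow 1
\]
does not split. The paper argues directly with Galois groups: if $K'\cap F\neq K$ then $\Ad$ or $(\Ad)^*$ is a $G_\Q$-quotient of $\HH=\Gal(K'/K)$; any such quotient kills $\LL=\ker\pi_2$ (because every element of $\LL$ is a $p$-th power in $\HH$), so it factors through $\HH/\LL$, forcing $\Gal(\Q(\Add_2)/\Q)$ to split over $\Gal(K/\Q)$, which contradicts the nonsplitting above. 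You instead encode the same obstruction as the vanishing $\coho^1(\Gal(\Q(\Add_n)/\Q),\Ad)=0$, proved via inflation--restriction and the injectivity of the transgression (again the nonsplitting), and then run an inflation--restriction/field--intersection argument to reach a contradiction.

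Two small remarks. First, your observation that all $G_\Q$-composition factors of $\Gal(\Q(\Add_n)/\Q(\Ad))$ are isomorphic to $\Ad$ lets you avoid the separate $(\Ad)^*$ case that the paper mentions; this is a mild simplification. Second, the step ``$E_0=\widetilde{L_f}$ for some $f=\sum a_i f_i$'' implicitly uses $\End_{\F_p[G_\Q]}(\Ad)=\F$, which in the paper's setting follows from $\Ad=\Adr$ (minimal field of definition, \cite{ravi2} Lemma~17); it is worth making this explicit. With that caveat, your cohomological packaging and the paper's hands-on argument are two presentations of the same proof.
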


\begin{proof}

  Let $\HH  = \Gal(K'/K) \subseteq \PGL_2(W(\F)/p^n)$ and $\pi_1:
  \PGL_2(W(\F)/p^n) \to \PGL_2(\F)$.  Observe that $H$ consists on the
  classes of matrices in $\Img(\rho_n)$ which are trivial in
  $\PGL_2(\F)$, i.e. $\HH = \Img(\Add_n) \cap \Ker(\pi_1)$.  Recall that
  our hypotheses imply $\PSL_2(W(\F)/p^n) \subseteq \Img(\Add_n)
  \subseteq \PGL_2(W(\F)/p^n)$, and therefore $\PSL_2(W(\F)/p^n)\cap
  \Ker(\pi_1) \subseteq \HH \subseteq \Ker(\pi_1)$. As
  $[\PSL_2(W(\F)/p^n):\PGL_2(W(\F)/p^n)] = 2$ and $\Ker(\pi_1)$ is a
  $p$ group we have that $\HH = \Ker(\pi_1)$.

  Recall that $\Gal(F/K) \simeq (\Ad)^r \times (\Ad^*)^s$ as
  $\Z[G_\Q]$-module and by Lemma $7$ of \cite{ravi2}, this is
  its decomposition as $\Z[G_\Q]$ simple modules. This implies that
  if $K' \cap F \neq K$ then $\Ad$ or $(\Ad)^*$ appear as a quotient
  of $\Gal(K'/K)$.

  Assume that $K' \cap F \neq K$ and that there is a surjective
  morphism $\varpi: \HH \to \Ad$. Let $\pi_2: \PGL_2(W(\F)/p^n) \to
  \PGL_2(W(\F)/p^2)$ and let $\LL = \ker(\pi_2) \subset \HH$. We claim that
  $\varpi(\LL) = 0$.  For this, observe that any matrix $\text{Id} + p^2M
  \in \GL_2(W(\F)/p^n)$ is the $p$-th power of some matrix $\text{Id} +
  pN \in \GL_2(W(\F)/p^n)$. 
Therefore, if $\text{Id} + p^2M \in \LL$ we have that 
\[
\varpi(Id + p^2M) = \varpi((Id + pN)^p) = p\varpi(Id + pN) = 0.
\]
This implies that $\varpi$ factors through $\Gal(\Q(\Add_2)/K)$, where $\Add_2$ is the reduction mod $p^2$ of 
$\Add_n$.  Since $\#\Gal(\Q(\Add_2)/K) = \#(\Img(\Add_2)\cap \Ker(\pi_1)) \leq (\#\F)^3$ and
$\#\Ad = (\#\F)^3$ we necessarily have
$\Gal(\Q(\Add_2)/\Q) = \Gal(L_f/\Q)$ for some $f \in \coho^1(G_\Q, \Ad)$. But this 
cannot happen since it would imply that the image of $\Add_2$ splits, which is impossible as
it contains $\PSL_2(W(\F)/p^2)$ when $p \geq 7$ or $\PGL_2(W(\F)/p^2)$
when $p=5$.

The case where there is a surjection $\pi: \HH \to (\Ad)^*$ works the same.
\end{proof}

\begin{remark}
  As we mentioned before, this global argument does not adapt to the
  cases when the coefficient field is ramified. Specifically, Lemma
  \ref{Lemma:intersec} above in no longer true if we allow the
  coefficients to ramify, as the extension corresponding to $\Add_2$
  corresponds to an element of $\coho^1(G_\Q,\Ad)$. Then we cannot
  apply Chebotarev's Theorem to find auxiliary primes which are
  nontrivial in the element of the cohomology corresponding to
  $\Add_2$, so we do not get an isomorphism between local and global
  deformations. 
\end{remark}

\begin{prop}
\label{surj1}
 For any $\tau \in \Gal(L/K)$ as above we have that
 \[
\coho^1(G_{P \cup T_\tau}, \Ad) \longrightarrow \bigoplus_{\ell \in P} \coho^1(G_\ell, \Ad)
\]
is a surjection.
 
\end{prop}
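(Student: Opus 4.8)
The plan is to deduce the surjectivity from Poitou--Tate global duality, by reformulating it as the vanishing of a dual Selmer group for $(\Ad)^*$ — a group that the auxiliary set $T_\tau$ has been built to kill. Put $\Sigma = P\cup T_\tau$ (together with the archimedean place, which contributes nothing to the $\coho^1$'s since $p$ is odd) and consider on $G_\Sigma$ the Selmer system $\mathcal L$ with local conditions $\mathcal L_\ell = 0$ for $\ell\in P$ and $\mathcal L_q = \coho^1(G_q,\Ad)$ (the full local group) for $q\in T_\tau$; its annihilator $\mathcal L^\perp$ under local Tate duality is then the system on $(\Ad)^*$ with $\mathcal L_\ell^\perp = \coho^1(G_\ell,(\Ad)^*)$ for $\ell\in P$ and $\mathcal L_q^\perp = 0$ for $q\in T_\tau$. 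Since $\coho^1(G_q,\Ad)/\mathcal L_q = 0$ for every $q\in T_\tau$, the Poitou--Tate exact sequence of Selmer groups shows that the cokernel of
\[
\coho^1(G_\Sigma,\Ad)\ \xrightarrow{\ \bigoplus_{\ell\in P}\mathrm{res}_\ell\ }\ \bigoplus_{\ell\in P}\coho^1(G_\ell,\Ad)
\]
injects into $\coho^1_{\mathcal L^\perp}(G_\Sigma,(\Ad)^*)^\vee$. Hence it is enough to prove $\coho^1_{\mathcal L^\perp}(G_\Sigma,(\Ad)^*)=0$, and this first arrow is exactly the map of the Proposition.

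The next step is to identify this dual Selmer group with something already understood. A class in $\coho^1_{\mathcal L^\perp}(G_\Sigma,(\Ad)^*)$ restricts to $0$ in $\coho^1(G_q,(\Ad)^*)$ for every $q\in T_\tau$, so in particular it is unramified at each prime of $T_\tau$. Since $(\Ad)^*$ is unramified outside $P$, writing $N\subseteq G_\Sigma$ for the closed normal subgroup generated by the inertia groups at the primes of $T_\tau$ (so that $G_\Sigma/N = G_P$ and $N$ acts trivially on $(\Ad)^*$), the inflation--restriction sequence identifies $\coho^1(G_\Sigma,(\Ad)^*)/\coho^1(G_P,(\Ad)^*)$ with a subgroup of $\Hom_{G_P}(N^{\mathrm{ab}},(\Ad)^*)$; as $N^{\mathrm{ab}}$ is generated as a $G_P$-module by the images of those inertia groups, any such homomorphism killing each $I_q$ is zero. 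Therefore a class trivial at every $q\in T_\tau$ already comes from $\coho^1(G_P,(\Ad)^*)$, and
\[
\coho^1_{\mathcal L^\perp}(G_\Sigma,(\Ad)^*)\ =\ \{\,c\in\coho^1(G_P,(\Ad)^*)\ :\ c|_{G_q}=0\ \text{for every }q\in T_\tau\,\}.
\]

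Finally, this last group vanishes by the construction of $T_\tau$. Fixing a basis $g_1,\dots,g_{r_2}$ of $\coho^1(G_P,(\Ad)^*)$, Proposition~\ref{prop:auxiliaryprimes}(3) produces, for each nonzero class, Chebotarev conditions (conjugacy classes of the form $\beta\rtimes\tilde c$ in the relevant field $M_g$) forcing it to be nonzero at the corresponding prime; Corollary~\ref{coro:setQ}, together with the compatibility $K'\cap F=K$ of Lemma~\ref{Lemma:intersec}, realizes these conditions by primes $q\not\equiv\pm1\pmod p$ at which $\rho_n$ is unramified with Frobenius eigenvalues of ratio $q$. Enlarging $T_\tau$ one prime at a time — adding, whenever the joint restriction map still has nonzero kernel, a prime detecting a class in that kernel — one arranges that $\coho^1(G_P,(\Ad)^*)\to\bigoplus_{q\in T_\tau}\coho^1(G_q,(\Ad)^*)$ is injective, i.e. that the displayed group is $0$; and this is independent of $\tau$, since the parameter $\tau\in\Gal(L/K)$ only prescribes the behaviour of the classes $f_i\in\coho^1(G_P,\Ad)$ at the primes of $T_\tau$, which is irrelevant here.

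The main obstacle is the second step: one must be certain that enlarging the ramification set from $P$ to $P\cup T_\tau$ introduces no classes in $\coho^1(G_\Sigma,(\Ad)^*)$ beyond those visibly ramified at $T_\tau$, so that the reduction to the already-understood group $\coho^1(G_P,(\Ad)^*)$ is legitimate. This rests on the tameness of the auxiliary primes and on the structure of $N^{\mathrm{ab}}$ as a $G_P$-module, and is the exact analogue of the cohomological bookkeeping behind Fact~16 of \cite{ravi3}; once it is in place, the surjectivity follows formally from Poitou--Tate duality and Corollary~\ref{coro:setQ}.
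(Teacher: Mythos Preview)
Your overall architecture is correct and is exactly what the paper (deferring to Proposition~10 of \cite{ravi3}) does: reduce the surjectivity, via Poitou--Tate, to the vanishing of the dual Selmer group $\coho^1_{\mathcal L^\perp}(G_{P\cup T_\tau},(\Ad)^*)$, and then show that any nonzero class in it is detected by some $q\in T_\tau$. Your step~2 (using inflation--restriction to descend a class that is unramified at $T_\tau$ back down to $\coho^1(G_P,(\Ad)^*)$) is a clean way to reduce to classes for which the fields $M_g$ of Proposition~\ref{prop:auxiliaryprimes} are already defined, and is exactly the bookkeeping the paper leaves implicit.

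The one genuine slip is in step~3, where you write ``Enlarging $T_\tau$ one prime at a time \dots\ one arranges that \dots''. The set $T_\tau$ is not something you construct: it is the fixed (infinite) Chebotarev set of all primes $q$, unramified in $K'F$, whose Frobenius projects to the conjugacy class of $\tau\rtimes\tilde c$ in $\Gal(L/\Q)$ and to $c$ in $\Gal(K'/\Q)$. The statement is ``for any $\tau$'', so you must take $T_\tau$ as given. The correct argument for step~3 is the one you yourself sketch in your last sentence: since $L\cap M=K$, the $\Gal(L/K)$-coordinate (fixed to $\tau$) and the $\Gal(M/K)$-coordinate are independent; hence for any nonzero $g\in\coho^1(G_P,(\Ad)^*)$ there exist, by Chebotarev and Lemma~\ref{Lemma:intersec}, primes $q$ with $\Frob_q|_L$ in the class of $\tau\rtimes\tilde c$ (so $q\in T_\tau$) and simultaneously $\Frob_q|_{M_g}$ in the class of $\beta\rtimes\tilde c$ (so $g|_{G_q}\neq 0$ by Proposition~\ref{prop:auxiliaryprimes}(3)). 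That is the whole point, and it is a statement about the given $T_\tau$, not about building it. Once you replace the ``enlarging'' paragraph by this direct Chebotarev argument, your proof is complete and coincides with the paper's.
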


\begin{proof}
  This is essentially Proposition $10$ of \cite{ravi3}, up to the fact
  that we ask a condition on $\Gal(K'/\Q)$ rather than
  $\Gal(K/\Q)$. Nevertheless, the same proof applies as the main
  argument is that for any $g \in \coho^1(G_{P \cup T_\tau},(\Ad)^*)$
  there are primes $q \in T_\tau$ such that $g|_{G_q} \neq 0$ and this
  is Proposition~\ref{prop:auxiliaryprimes}.
 \end{proof}

\section{Proof of main theorems}
\begin{thmA}
  Let $\F$ be a finite field of characteristic $p>5$.  Consider
  $\rho_n: G_\Q \to \GL_2(W(\F)/p^n)$ a continuous representation
  ramified at a finite set of primes $S$ satisfying the following properties:
  \begin{itemize}
  \item The image is big, i.e. $\SL_2(\F) \subseteq \Img(\rrhon)$.
  \item $\rho_n$ is odd.
  \item The restriction $\overline{\rho_n}|_{G_p}$ is not twist equivalent
    to the trivial representation nor the indecomposable unramified
    representation given by $\left(\begin{smallmatrix} 1 &*\\ 0 &
        1\end{smallmatrix}\right)$.


  \end{itemize}
Let $P$ be a finite set of primes containing $S$, and for every
  $\ell\in P$, $\ell \neq p$, fix a deformation 
$\rho_\ell:G_\ell \to W(\F)$ of $\rho_n|_{G_\ell}$.
At the prime $p$, let $\rho_p$ be
a deformation of $\rho_n|_{G_p}$ which is ordinary or crystalline with
Hodge-Tate weights $\{0,k\}$, with $2 \le k \le p-1$.

Then there is a finite set $Q$ of auxiliary primes $q \nequiv \pm1 \pmod{p}$ and a modular representation
\[
\rho: G_{P\cup Q} \longrightarrow \GL_2(W(\F)),
\]
such that:
\begin{itemize}
\item the reduction modulo $p^n$ of $\rho$ is $\rho_n$,
\item $\rho|_{I_\ell} \simeq \rho_\ell|_{I_\ell}$ for every $\ell \in P$,
\item $\rho|_{G_q}$ is a ramified representation of Steinberg type for every $q \in Q$.
\end{itemize}
 \end{thmA}

\begin{proof}
  Once we have all the ingredients, the proof mimics that of Theorem 1
  of \cite{ravi3}. Let $r=
  \dim_\FF\sha_P^2(\Ad)=\dim_\FF\sha^1_P((\Ad)^*)$, and let
  $\{g_1,\ldots,g_r\}$ be a basis of $\sha^1_P((\Ad)^*)$. Let
  $\{f_1,\ldots,f_r\}$ be a linearly independent set in
  $\coho^1(G_P,\Ad)$. For each $i=1,\dots,r$ let $q_i$ be such that:
\[
f_i|_{G_{q_i}}\notin N_{q_i}, \qquad g_i|_{G_{q_i}}\neq 0 \qquad f_j|_{G_{q_i}}=g_j|_{G_{q_i}}=0 \text{ for }j\neq i.
\]
Such primes exists in virtue of Corollary~\ref{coro:setQ} and
Lemma~\ref{Lemma:intersec}.  Let $Q_1=\{q_1,\ldots,q_r\}$ so that
$\sha_{P\cup Q_1}^2(\Ad)=0=\sha_{P\cup Q_1}^1((\Ad)^*)$. With this choice,
the inflation map $\coho^1(G_P,\Ad)\to \coho^1(G_{P\cup Q_1},\Ad)$ is an
isomorphism by the same dimension counting as in the proof of Fact 16
(\cite{ravi3}). As mentioned in the introduction, we need to pick the
set of primes $Q_2$ such that the map
\[
\coho^1(G_{S\cup Q_1 \cup Q_2},\Ad) \to \bigoplus_{\ell \in S \cup Q_1 \cup Q_2} \coho^1(G_\ell,\Ad)/N_{\ell},
\]
is an isomorphism. Recall that once we achieved $\sha^2_{P\cup Q_1} = 0$, no set of extra primes we consider adds new global obstructions.

The way to construct such set is as follows: take a basis
$\{f_1,\ldots,f_d\}$ of the preimage under the restriction map
$\coho^1(G_P,\Ad) \to \oplus_{l \in P}\coho^1(G_\ell,\Ad)$ of the set
$\oplus_{\ell \in P}N_\ell$. By Lemma 12 (\cite{ravi3}), $r \ge
d$. For $r+1\le i \le d$, let $\alpha_i$ be an element of $\Gal(L/K)$
all whose entries are $0$ except the i-th which is a nonzero element
in which $\tilde{c}$ acts trivially. By Proposition~\ref{surj1}, the
map $\coho^1(G_{S\cup Q_1\cup T_i},\Ad) \to \oplus_{\ell \in
  P}\coho^1(G_\ell,\Ad)$ is surjective. By Lemma 14 (\cite{ravi3}), we
can pick a prime ideal $\id{p}_i \in T_i$ such that if
$T=\{\id{p}_{r+1},\ldots,\id{p}_d\}$, then the map
\[
\coho^1(G_{P\cup Q_1 \cup Q_2},\Ad)\to \oplus_{\ell \in P} \coho^1(G_\ell,\Ad)/N_\ell,
\]
is surjective. The same proofs of Lemma 15 and 16 (\cite{ravi3}) show
that this set $Q_2$ satisfies the required properties.
This proves the existence of the lift, the condition on the
restriction to inertia is automatic by the choice of the sets
$C_\ell$.

To prove that $\rho$ is modular, we know it has big residual image
hence it is residually modular (by Serre's conjectures). The modularity
is covered by the following two modularity lifting theorems: for the
ordinary case modularity follows as a consequence of Theorem $5.4.2$
of \cite{gera} (precisely we are in a situation covered by the
consequence stated as theorem in the introduction); for the
supersingular case we apply Theorem 3.6 of \cite{diamond3}. Observe
that $\rho$ is crystalline by definition and meets the shortness
condition because it preserves the Hodge-Tate weights of $\rho_{f,p}$,
which satisfy $2\leq k \leq p-1$. The irreducibility condition holds
because of the big image hypothesis.

\end{proof}

Let us recall the hypothesis of our second result: let $f\in
S_k(\Gamma_0(N),\epsilon)$ be a newform, with coefficient field
$K$. Let $\id{p}$ a prime ideal in $\Om_K$ dividing $p$ with
ramification index $1$ and let 
\[
\rho_n:\Gal(\overline{\Q}/\Q) \to \GL_2(\Om_{\id{p}}/{\id{p}^n}),
\] 
the reduction modulo $\id{p}^n$ of its $p$-adic Galois representation.
 
\begin{thmB}
  In the above hypothesis, let $n$ be a positive integer and $p>k$ be
  a prime such that:
  \begin{itemize}
  \item $p\nmid N$ or $f$ is ordinary at $p$,
  \item $\SL_2(\Om_{\id{p}}/\id{p}) \subseteq \Img(\overline{\rho_{f,p}})$,
  \end{itemize}
  Let $N'$ denote the conductor of $\rho_n$. If $N''=\prod_{p \mid
    N'}p^{v_p(N)}$, i.e. $N''$ is $N$ divided by its prime-to-$N'$
  part, then there exist an integer $r$, a set $\{q_1,\ldots,q_r\}$ of
  auxiliary primes prime to $N$ satisfying $q_i \not \equiv 1 \pmod p$
  and a newform $g$, different from $f$, of weight $k$ and level
  $N''q_1\ldots q_r$ such that $f$ and $g$ are congruent modulo
  $p^n$. Furthermore, the form $g$ can be chosen with the same
  restriction to inertia as that of $f$ at the primes dividing $N'$.
\end{thmB}

\begin{proof}
  We want to apply Theorem A to the representation $\rho_n$, with the
  local deformation $\rho_{f,p}|_{I_\ell}$ at the primes dividing
  $N'$. Note that $f$ being a modular form implies that the
  representation is odd, and the hypothesis $p>k$ implies that
  $\rho_{f,p}|_{I_p}$ satisfies the third hypothesis of such
  theorem. Finally, the condition $p\nmid N$ or $f$ being ordinary at
  $p$ implies that $\rho_{f,p}|_{I_p}$ can be taken as a deformation
  at $p$.

  Theorem A then gives a modular representation $\rho$ which is
  congruent to $\rho_{f,p}$ modulo $p^n$, and of conductor dividing
  $N'q_1\dots q_r$. By the choice of the inertia action, the conductor
  of $\rho$ has the same valuation as the $\rho_n$ one at the primes
  dividing $N'$, so we only need to show that all the primes $q_i$ are
  ramified ones. But if this is not the case, by the choice of the
  sets $C_{q_i}$, and looking at the action of Frobenius, it would
  contradict Weil's Conjectures, since the roots of the Frobenius'
  characteristic polynomial would be $1$ and $q$, which do not have
  the same absolute value.
  
  Note that when $\rho_f$ does not lose ramification when reduced 
  modulo $p^n$ and $r = 0$, the newform $g$ that Theorem A produces could
  be equal to $f$. If this is the case, we apply Theorem A with $P = S \cup \{q\}$, $q$
  being in the hypotheses of auxiliary primes and $$\rho_q = \begin{pmatrix}
                                                             \chi&*\\
                                                             0&1
                                                            \end{pmatrix}$$
  with $*$ ramified (up to twist).

\end{proof}

\section{Example}

We want to apply the main result to some particular example. More
concretely, we want to add some Steinberg primes to a modular form,
modulo powers of a prime. For that purpose we pick the smallest prime
in the hypothesis, $p=5$, and start with a representation coming from
an elliptic curve $E$ of prime level $\id{q}$ (in order to deal with
small cohomological dimensions) with full image modulo $5$,
i.e. $\Gal(\Q(E[5])/\Q) \simeq \GL_2(\F_5)$. Its adjoint
representation is then isomorphic to $\PGL_2(\F_5)$ which is
isomorphic to $S_5$, the symmetric group in $5$ elements. For
$S=\{5,\id{q}\}$, we need to compute $\coho^1(G_{S},\Ad)$ and
$\coho^2(G_S,\Ad)$. Recall the following dimension computations:

\begin{itemize}
\item If $\ell \not\equiv \pm 1 \pmod p$ then $\coho^2(G_\ell,\Ad)=0$ (see
  Section 3, or \cite{ravi2} Proposition 2).
\item Suppose that at $p$ inertia acts via fundamental characters of level two. Then $\coho^2(G_p,\Ad)=0$ (\cite{ravi2} Lemma 5).
\item Suppose that $\bar{\rho}$ is flat, and $\bar{\rho}|_{G_p}$ is indecomposable. Then $\coho^2(G_p,\Ad)=0$ (\cite{ravi1}).
\end{itemize}

Also, if we denote by $r = \dim\sha_S^1((\Ad)^*)$, and $s$ the number
of primes with $H^2(G_\ell,\Ad)\neq 0$, then (see \cite{ravi3} Lemma, page 139):
\begin{itemize}
\item $\dim \coho^1(G_S,\Ad)=r+s+2$.
\item $\dim \coho^2(G_S,\Ad)=r+s$.
\end{itemize}

\subsection{Some group theory} 
Recall from Lemma 9 (of \cite{ravi2}) that the elements in
$\coho^1(G_s,\Ad)$ (resp. in $\coho^1(G_s,\Ad^*)$) correspond to
extensions $M$ of $\Q(\Ad)$ (resp. $\Q(\Ad^*)$) whose Galois group is
isomorphic to $\PGL_2(\F_5) \ltimes M_2^0(\F_5)$ (the $2\times 2$
matrices with zero trace), so we need to compute all such
extensions. The problem is that $\PGL_2(\F_5)$ has order $120$, and we
cannot do Class Field Theory in such a huge extension, so we will
reduce the problem to compute some abelian extension over a small
degree extension of $\Q$ where we actually can compute Class Field Theory.

The action of $S_5$ in $M_2^0(\F_5)$ is faithful, so we need to
restrict the action to smaller subgroups to find the desired extension.

\begin{lemma}
  Let $H$ be a subgroup of $S_5$, and suppose that the restriction of
  the action of $S_5$ in $M_2^0(\F_5)$ to $H$ decomposes as the direct
  sum of two subspaces $V_1 \oplus V_2$. Then $H \ltimes V_i$ is a
  subgroup of $S_5 \ltimes M_2^0(\F_5)$. Furthermore, if $V_1$ is one
  dimensional, then $H \ltimes V_2$ is a normal subgroup of $H \ltimes
  M_2^0(\F_5)$ if and only if $V_1$ is the trivial representation.
\end{lemma}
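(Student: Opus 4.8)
The plan is to work directly with the multiplication law of the semidirect product. Write elements of $S_5 \ltimes M_2^0(\F_5)$ as pairs $(h,v)$, so that $(h_1,v_1)(h_2,v_2) = (h_1h_2,\, v_1 + h_1\cdot v_2)$, where $h\cdot v$ denotes the conjugation action of $S_5 \cong \PGL_2(\F_5)$ on $M_2^0(\F_5)$, and $(h,v)^{-1} = (h^{-1},\, -h^{-1}\cdot v)$. For the first assertion I would simply observe that, since $V_i$ is by hypothesis an $H$-submodule of $M_2^0(\F_5)$, for $h_1,h_2\in H$ and $v_1,v_2\in V_i$ both $h_1h_2\in H$ and $v_1+h_1\cdot v_2\in V_i$, and likewise $-h^{-1}\cdot v\in V_i$; hence $\{(h,v): h\in H,\ v\in V_i\}$ contains the identity and is closed under product and inverse, i.e. it is a subgroup of $H\ltimes M_2^0(\F_5)\le S_5\ltimes M_2^0(\F_5)$. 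This step is routine.

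For the normality statement, the key computation is that of a general conjugate. Fixing $(k,w)\in H\ltimes M_2^0(\F_5)$ and $(h,v)\in H\ltimes V_2$, a direct manipulation of the multiplication law gives
\[
(k,w)(h,v)(k,w)^{-1} = \bigl(khk^{-1},\; w - (khk^{-1})\cdot w + k\cdot v\bigr).
\]
Here $khk^{-1}\in H$ automatically, and $k\cdot v\in V_2$ since $k\in H$ and $V_2$ is $H$-stable; so the conjugate lies in $H\ltimes V_2$ precisely when $w-(khk^{-1})\cdot w\in V_2$. Setting $h'=khk^{-1}$, which ranges over all of $H$, and decomposing $w=w_1+w_2$ along $M_2^0(\F_5)=V_1\oplus V_2$, the fact that both $V_1$ and $V_2$ are $H$-stable shows that the $V_1$-component of $w-h'\cdot w$ equals $w_1-h'\cdot w_1$. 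Therefore $H\ltimes V_2$ is normal in $H\ltimes M_2^0(\F_5)$ if and only if $h'\cdot w_1=w_1$ for all $h'\in H$ and all $w_1\in V_1$, i.e. if and only if $H$ acts trivially on $V_1$, i.e. $V_1$ is the trivial representation. (The hypothesis $\dim V_1=1$ plays no role in this equivalence; it is the case needed below, where it makes $H\ltimes V_2$ of index $5$.)

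The argument has essentially no serious obstacle: the only point requiring care is the bookkeeping in the conjugation identity above — distinguishing the term $k\cdot v$ coming from the inner element from the term $(khk^{-1})\cdot w$ produced by conjugating — but once the semidirect-product formula is written out this is mechanical. The conceptual heart is the elementary remark that projection onto the $V_1$-summand is $H$-equivariant, so whether $H\ltimes V_2$ is normal in $H\ltimes M_2^0(\F_5)$ is governed entirely by how $H$ moves $V_1$.
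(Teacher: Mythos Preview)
Your proof is correct and follows essentially the same approach as the paper: both compute the conjugation formula in the semidirect product and observe that the $V_1$-component of the vector part is $w_1-(khk^{-1})\cdot w_1$, which vanishes for all choices precisely when $H$ acts trivially on $V_1$. The only cosmetic difference is that the paper first reduces to conjugating by elements $(h,v_1)$ with $v_1$ a fixed generator of $V_1$, whereas you conjugate by a general $(k,w)$ and project to $V_1$; your remark that the hypothesis $\dim V_1=1$ is not actually used is also accurate.
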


\begin{proof}
  The first claim is clear from the definition of a semi-direct
  product. For the second claim, let $\{v_1,v_2,v_3\}$ be a basis of
  $M_2^0(\F_5)$ such that $V_1 = \<v_1>$ and $V_2=\<v_2,v_3>$. Then it
  is clear that $H \ltimes V_2$ is invariant under elements of the
  form $(h,v_i)$ with $i=2,3$ (since it is a subgroup), and since it
  is enough to check invariance on generators it is enough to check
  invariance under elements of the form $(h,v_1)$. But a direct
  computation shows that
\[
(h,v_1)(g,w)(h,v_1)^{-1}=(hgh^{-1},v_1+h\cdot w-(hgh^{-1})\cdot v_1),
\]
which lies in $V_2$ if and only if $g\cdot v_1 = v_1$ for all $g \in H$.
\end{proof}

Then we need a subgroup of $S_5$ whose order is prime to $5$ (for the
representation to be semisimple), whose restriction contains the
trivial representation and such that the intersection of its
conjugates is trivial (for the Galois closure of the fixed field to be
the whole extension). The subgroups of $S_5$ of order prime to $5$
are: $\{1\}$, $C_2$, $C_2 \times C_2$, $C_4$, $D_8$, $C_3$, $C_6$,
$S_3$, $S_3\times C_2$, $A_4$, $S_4$ (where $C_n$ means a cyclic group
of order $n$, and $D_n$ the dihedral group with $n$ elements). The
largest one (in terms of cardinality) for which the actions splits is
$S_3 \times C_2$, for which $M_2^0(\F_5)$ splits as a direct sum 
\[
<(3,1,0),(3,0,1)> \oplus \<(4,1,1)>,
\]
via the identification $S_3\times C_2 =
\<\left(\begin{smallmatrix}1&2\\2&0\end{smallmatrix}\right),
\left(\begin{smallmatrix}4&2\\1&1\end{smallmatrix}\right)> \times
  \<\left(\begin{smallmatrix}3&2\\2&2\end{smallmatrix}\right)>$ in
  $\PGL_2(\F_5)$ and in the basis of $M_2^0(\F_5)$
  $\left\{\left(\begin{smallmatrix}1 &0 \\0
        &4 \end{smallmatrix}\right),\left(\begin{smallmatrix}0 &1 \\0
        &0 \end{smallmatrix}\right),\left(\begin{smallmatrix}0 &0 \\1
        &0 \end{smallmatrix}\right)\right\}$ .  The action in the
  $1$-dimensional subspace is non-trivial, nevertheless the
  restriction to its cyclic subgroup of order $6$ is trivial as can be
  seen via a direct computation (and actually such group is the stabilizer of the matrix $\left(\begin{smallmatrix}4 & 1\\1& 1\end{smallmatrix}\right)$). It is clear that
  the intersection of its conjugates is trivial (since $A_5$ is the
  only normal subgroup of $S_5$ and the action of $S_5$ in
  $M_2^0(\F_5)$ is irreducible).

\begin{lemma}
  $(C_3 \times C_2) \ltimes V_2 \lhd (S_3 \times C_2) \ltimes M_2^0(\F_5).$
\label{lemma:abelian}
\end{lemma}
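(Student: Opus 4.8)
The plan is to verify normality by a direct computation inside the semidirect product, using the conjugation formula already established in the previous lemma together with the fact that the cyclic subgroup $C_3\times C_2$ of order $6$ acts trivially on the one-dimensional summand $V_1$.

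First I would fix notation: write $H=S_3\times C_2$, let $K=C_3\times C_2$ be its cyclic subgroup of order $6$, and write $M=M_2^0(\F_5)=V_1\oplus V_2$ as $H$-modules, with $\dim_{\F_5}V_1=1$ and $V_1=\langle\left(\begin{smallmatrix}4&1\\1&1\end{smallmatrix}\right)\rangle$. I would then record three elementary facts. (a) $K\lhd H$: since $C_3=A_3\lhd S_3$ we get $C_3\times C_2\lhd S_3\times C_2$ (alternatively, $K$ has index $2$). (b) $V_2$ is stable under all of $H$, by the very choice of the decomposition in the construction preceding the statement. (c) $K$ acts trivially on $V_1$; this is the direct computation noted in the text, where it is observed that $K$ is precisely the stabilizer of the matrix $\left(\begin{smallmatrix}4&1\\1&1\end{smallmatrix}\right)$. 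By the first part of the previous lemma, $K\ltimes V_2$ is a subgroup of $G:=H\ltimes M$.

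Next I would take an arbitrary $(h,v)\in G$ with $h\in H$, $v\in M$, and an arbitrary $(g,w)\in K\ltimes V_2$ (so $g\in K$, $w\in V_2$), and apply the conjugation formula of the previous lemma, valid verbatim for an arbitrary vector $v\in M$:
\[
(h,v)(g,w)(h,v)^{-1}=\bigl(hgh^{-1},\ v+h\cdot w-(hgh^{-1})\cdot v\bigr).
\]
The $H$-component $hgh^{-1}$ lies in $K$ by fact (a). For the vector component, write $v=v^{(1)}+v^{(2)}$ with $v^{(i)}\in V_i$ and set $k=hgh^{-1}\in K$; then $v-k\cdot v=(v^{(1)}-k\cdot v^{(1)})+(v^{(2)}-k\cdot v^{(2)})=v^{(2)}-k\cdot v^{(2)}\in V_2$ using fact (c), while $h\cdot w\in V_2$ by fact (b). Hence $v+h\cdot w-(hgh^{-1})\cdot v\in V_2$, so the conjugate lies in $K\ltimes V_2$, which proves $(C_3\times C_2)\ltimes V_2\lhd (S_3\times C_2)\ltimes M_2^0(\F_5)$.

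Once facts (a)--(c) are in place the argument is purely formal. The only point requiring an actual computation — and the mild obstacle — is fact (c), that the order-$6$ subgroup acts trivially on $V_1$; this is exactly the failure of the ``only if'' hypothesis of the previous lemma for $H$ itself (so $H\ltimes V_2$ is \emph{not} normal), and it is what forces us to pass down to $K$ in order to obtain a normal subgroup.
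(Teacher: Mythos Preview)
Your proof is correct and follows essentially the same approach as the paper's: both use the conjugation formula from the previous lemma together with the facts that $C_3\times C_2\lhd S_3\times C_2$ and that $C_3\times C_2$ acts trivially on $V_1$. Your version simply unfolds in detail what the paper compresses into the phrase ``the same proof gives the statement.''
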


\begin{proof}
  The previous Lemma implies that $(C_3 \times C_2) \ltimes V_2 \lhd
  (C_3 \times C_2) \ltimes M_2^0(\F_5)$ but since $C_3 \lhd S_3$, the
  same proof gives the statement.
\end{proof}

Then we first search for the $S_5$ extension corresponding to the
adjoint representation (which might be given as the Galois closure of
a degree $5$ extension) and then we search for the fixed field of
$(C_3\times C_2) \ltimes M_2^0(\F_5)$, which is a degree $20$
extension of $\Q$. By Lemma~\ref{lemma:abelian}, the field fixed of
$(C_3\times C_2) \ltimes V_2$ is a degree $5$ abelian extension $L_2$
of it, so we can compute it using class field theory. Note that since
$(S_3 \times C_2)\ltimes V_2$ is a subgroup, the degree five extension
we are looking for actually is a non-Galois degree $5$ extension $L_1$ of the
degree $10$ extension over $\Q$ fixed by $(S_3 \times C_2)\ltimes
M_2^0(\F_5)$. We illustrate this phenomena in the following diagram:
\[
\xymatrix{
& L \ar@{-}[dl]\ar@{-}[d]\\
\Q(\Ad) \ar@{-}[d] & L_2\ar@{-}[d]\ar@{-}[dl]^{\text{Galois}}\\
\Q(\Ad)^{C_6}\ar@{-}[d] & L_1\ar@{-}[dl]^{\text{non-Galois}}\\
\Q(\Ad)^{S_3\times C_2}\ar@{-}[d]\\
\Q }
\]

To compute with the adjoint representation, we must add the 
$5$-th roots of unity. The Hasse diagram is the following
\[
\xymatrix@=1em{
& \Q(\Ad,\xi_5) \ar@{-}[dl] \ar@{-}[dr]\\
\Q(\Ad)\ar@{-}[dr] & & \Q(\xi_5)\ar@{-}[dl]\\
& \Q(\sqrt{5})\ar@{-}[d]\\
& \Q
}
\]

The Galois group $\Gal(\Q(\Ad^*)/\Q) \simeq \Gal(\Q(\Ad,\xi_5)/\Q)
\simeq C_4 \ltimes A_5$, where the action is through the projection
$C_4 \to C_2$, and the latter action is the classical isomorphism $S_5
\simeq C_2 \ltimes A_5$. This Galois group also acts on $M_2^0(\F_5)$,
where the $C_4$ part acts as $\F_5^\times$ (which corresponds to the
mod $5$-cyclotomic character action), and $A_5$ as before. To compute
the Shafarevich group $\sha^1(G_S,\Ad^*)$, we do a similar trick as
before, we consider the subgroup $C_4 \ltimes C_3$ (which also
satisfies that the intersection of its conjugates is trivial), which
is an extension of the previous cyclic group of order $6$, and get
exactly the same degree $20$ extension.

\subsection{Particular example}
Consider the elliptic curve
\[
E_{17a1}: y^2+xy+y=x^3-x^2-x-14.
\]

The representation obtained by looking at the $5$-torsion points has
full image (using \cite{sage}), so is isomorphic to
$\GL_2(\F_5)$. Then its adjoint representation corresponds to a Galois
extension of $\Q$ with Galois group isomorphic to $\PGL_2(\FF_5)
\simeq S_5$ and only ramified at $5$ and $17$. We can search for such
extensions (they are the Galois closure of a degree $5$ extension)
in Roberts-Jones tables (see \cite{jones}), and get $12$ such
extensions, given by the polynomials:
\begin{eqnarray*}
x^5 + x^3 - 2x^2 - 2x - 3, & x^5 - 5x^2 + 5, & x^5 - 85x - 153, \\
 x^5 - 15x^3 - 75x^2 - 110x - 89, & x^5 - 50x^2 + 100x - 65, & x^5 - 10x^3 - 20x^2 - 15x + 421, \\
x^5 + 35x^3 - 15x^2 + 185x - 1102,& x^5 - 85x^2 - 85x - 51,  & x^5 + 15x^3 - 45x^2 + 60x - 239, \\
 x^5 + 25x^3 - 125x^2 + 250x - 420, & x^5 - 50x^2 - 25x - 230, & x^5 + 2125x - 8075.
\end{eqnarray*}

To know which one corresponds to our elliptic curve, we just compute
the characteristic polynomial of the Frobenius at $3$, which
is given by $x^2-3$, which means that it has order $2$ in
$\PGL_2(\F_5)$. If we compute the inertial degree of $3$ in the above
extensions, we see that there exists a prime above $3$ with inertial
degree greater than $2$ in all the field extensions but $x^5-85x-153$,
which must be the extension we are looking for.

To search for the $2$-dimensional space $\coho^1(G_S,\Ad)$, we search
for a degree $20$ extension $M$ of $\Q$ fixed by the $C_6$ subgroup
(using the Pari script subfieldgen written by Bill Allomber). It is
given by the polynomial
\begin{multline*}
P(x)= x^{20} - 5 x^{19} + 5 x^{18} + 5 x^{17} + 105 x^{16} - 591 x^{15} +
  1545 x^{14} - 1125 x^{13} - 5975 x^{12} + 28195 x^{11}- \\  - 57199
  x^{10} + 44405 x^9 + 188910 x^8 - 778890 x^7 + 1946100 x^6 - 3335796
  x^5 +\\ + 4553305 x^4 -  4695185 x^3 + 3627665 x^2 - 1817365 x + 443586
\end{multline*}

Its degree $10$ subextension $N$ is given by 
\[
x^{10} - 5 x^9 + 5 x^8 + 10
x^7 - 15 x^6 + 40 x^5 - 155 x^4 + 350 x^3 - 430 x^2 + 1525 x - 2670.
\]

\begin{lemma}
  In the previous hypothesis, $\dim \coho^2(G_S,\Ad)=0$ and $\dim
  \coho^1(G_S,\Ad)=2$.
\end{lemma}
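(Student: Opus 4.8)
The plan is to reduce both equalities to the two dimension formulas recalled just above: with $r=\dim\sha^1_S((\Ad)^*)$ and $s$ the number of primes $\ell\in S$ for which $\coho^2(G_\ell,\Ad)\neq 0$, one has $\dim\coho^1(G_S,\Ad)=r+s+2$ and $\dim\coho^2(G_S,\Ad)=r+s$. (Equivalently, the global Euler characteristic formula gives $\dim\coho^1(G_S,\Ad)-\dim\coho^2(G_S,\Ad)=2$, using that $\bar\rho$ is odd and that $\coho^0(G_S,\Ad)=\Ad^{G_\Q}=0$ since the image is all of $\GL_2(\F_5)$.) So it suffices to prove $s=0$ and $r=0$.

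First I would dispose of $s$, treating the two primes of $S$ separately. Since $17\equiv 2\not\equiv\pm1\pmod 5$, the first bullet of Section~3 (equivalently \cite{ravi2}, Proposition~2) gives $\coho^2(G_{17},\Ad)=0$. At $\ell=5=p$ the curve $E_{17a1}$ has good reduction, its conductor being $17$, so $\bar\rho$ is finite flat at $5$; one then checks the reduction type at $5$: in the supersingular case inertia acts through the fundamental characters of level two and $\coho^2(G_5,\Ad)=0$ by \cite{ravi2}, Lemma~5, while in the ordinary case $\bar\rho|_{G_5}$ is indecomposable and $\coho^2(G_5,\Ad)=0$ by \cite{ravi1}. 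In either case $5$ contributes nothing, so $s=0$.

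It remains to prove $r=\dim\sha^1_S((\Ad)^*)=0$, equivalently $\sha^2_S(\Ad)=0$. Here I would use the group-theoretic reductions of the previous subsections to replace the degree $240$ field $\Q((\Ad)^*)$ by the explicit degree $20$ field $M$ (with defining polynomial $P(x)$, fixed by the order $6$ subgroup, and degree $10$ subfield $N$): a nonzero class in $\sha^1_S((\Ad)^*)$ would correspond to an extension of $\Q((\Ad)^*)$ with Galois group a $G_\Q$-submodule of $M_2^0(\F_5)$, unramified outside $S$ and, crucially, \emph{split} at every place above $S$; by Lemma~\ref{lemma:abelian} and the splitting $M_2^0(\F_5)=V_1\oplus V_2$ with $V_1$ the trivial component of the order $6$ action, this amounts to a degree $5$ abelian extension of $M$ that is unramified outside $S$, completely split above $5$ and $17$, and lying in the trivial eigenspace of the order $6$ action. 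One then computes, by class field theory over $M$, the $5$-part of the ray class group carrying these splitting conditions, intersects it with that eigenspace, and checks that the result is zero; the analogous computation over $M$ without the splitting conditions pins down $\dim\coho^1(G_S,\Ad)=2$ directly. Combined with $s=0$, this yields $\dim\coho^2(G_S,\Ad)=0$ and $\dim\coho^1(G_S,\Ad)=2$.

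The main obstacle is this last step: organising the class field theory computation over the degree $20$ number field $M$ so that ``unramified outside $S$'' together with complete splitting at the primes above $5$ and $17$ is encoded by the correct modulus, correctly identifying the Galois action on the resulting ray class group and isolating the trivial eigenspace of the order $6$ subgroup, and then carrying out the machine-assisted computation with the explicit polynomials $P(x)$ and its degree $10$ subfield. Everything else — the Euler characteristic bookkeeping and the local analysis at $5$ and $17$ — is routine given the facts already recalled.
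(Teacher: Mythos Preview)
Your overall strategy is correct and matches the paper's: reduce to the dimension formulas $\dim\coho^1(G_S,\Ad)=r+s+2$, $\dim\coho^2(G_S,\Ad)=r+s$, and show $s=0$ and $r=0$. Your treatment of $s=0$ is essentially the same as the paper's (the paper argues indecomposability of $\bar\rho|_{G_5}$ directly from the fact that $5$ is totally ramified in the degree $10$ field $\Q(\Ad)^{S_3\times C_2}$, rather than splitting into ordinary/supersingular cases, but this is a cosmetic difference).

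Where you diverge is in the argument for $r=0$, and here you are overcomplicating things. You correctly note that a nonzero class in $\sha^1_S((\Ad)^*)$ would produce a degree~$5$ abelian extension of $M$ that is unramified outside $S$ \emph{and} completely split at all primes above $5$ and $17$. But put these two conditions together: unramified outside $S$ and split (in particular unramified) at $S$ means the extension is \emph{everywhere unramified}. So the whole question reduces to whether $5$ divides the class number $h_M$ of the degree~$20$ field $M$. The paper simply computes that $5\nmid h_M$, and that finishes the proof: no ray class group with modulus, no splitting conditions to encode, no eigenspace analysis, no separate computation of $\dim\coho^1$. The ``main obstacle'' you describe --- organising the ray class computation with the right modulus and isolating the trivial eigenspace of the $C_6$-action --- evaporates entirely once you make this observation. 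Your route would work, but it is doing strictly more than necessary.
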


\begin{proof}
  By Lemma 9 in \cite{ravi2} $\dim \coho^2(G_S,\Ad)=r+s$ and $\dim
  \coho^1(G_S,\Ad)=r+s+2$, where $r = \dim \sha_S^1((\Ad)^*)$ and $s$ is 
  the number of primes $v \in S$ such that $\dim \coho^2(G_s,\Ad)\neq 0$.

  Since $17 \not\equiv \pm 1 \pmod 5$, Proposition 2 of \cite{ravi1} implies
  $\coho^2(G_{17},\Ad)=0$. Also,
  since the prime $5$ is totally ramified in the extension
  $\Q(\Ad)^{S_3 \times C_2}$, $\bar{\rho}|_{G_5}$ is indecomposable
  (it is not abelian), and also $\coho^2(G_5,\Ad)=0$ (the numbers in 
  Table 3 on \cite{ravi2} apply), so $s=0$. 

  On the other hand, the elements of $\sha_S^1((\Ad)^*)$ correspond to
  extensions of $\Q((\Ad)^*)$ unramified outside $S$ at which the
  primes above $5$ and $17$ split completely. In particular, they are
  unramified extensions of $M$. Since the class group of such
  extension is not divisible by $5$, we deduce that it is trivial, so
  $r=0$, and the result follows.
\end{proof}

\begin{remark}
  The local $\coho^1(G_5,\Ad)$ has dimension $3$, and the subspace
  $N_5$ is that of finite flat group schemes, which are
  indecomposable (see Remark\ref{rem:star}), which has dimension $1$
  (see Table 3 on \cite{ravi2}).
\end{remark}

We have to compute all degree $5$ Galois extensions of $M$ which are
unramified outside $5$ and $17$. We use Class Field Theory, where a bound for
the exponent of the modulus $e(\id{p})$ is given by the following
result.

\begin{prop}
Let $L/K$ be an abelian extension of prime degree $p$. If $\id{p}$
ramifies in $L/K$, then

\[
\left \{\begin{array}{cl}
e(\id{p}) = 1 & \text{if } \id{p} \nmid p\\
2
\le e(\id{p}) \le \left \lfloor\frac{p e(\id{p}|p)}{p-1} \right \rfloor +1 & \text{if }\id{p} \mid
 p.
\end{array} \right.
\]
\label{cohen}
\end{prop}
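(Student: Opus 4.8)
The plan is to reduce the statement to a purely local computation, namely the conductor of a character of order $p$. Since $\Gal(L/K)\simeq\Z/p\Z$, the conductor of $L/K$ coincides, at each prime, with the conductor of any nontrivial character $\chi\colon\Gal(L/K)\to\mu_p$; and since $[L:K]=p$ is prime, a prime $\id{p}$ of $K$ that ramifies in $L/K$ is necessarily totally ramified. So I would pass to the completion $K_{\id{p}}$ and to a totally ramified cyclic extension $L_{\id{P}}/K_{\id{p}}$ of degree $p$. Write $e_0=e(\id{p}\mid p)=v_{K_{\id{p}}}(p)$, let $k$ be the residue field, and let $m=e(\id{p})$ be the exponent of $\id{p}$ in the conductor, so that $\chi$ is trivial on $1+\id{p}^{m}$ but not on $1+\id{p}^{m-1}$, with the convention $1+\id{p}^{0}=\mathcal{O}_{K_{\id{p}}}^{\times}$.

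If $\id{p}\nmid p$, the extension is tamely ramified: $1+\id{p}$ is a pro-$\ell$ group with $\ell\neq p$, hence has no quotient of order $p$, so $\chi$ is automatically trivial on it while being nontrivial on $\mathcal{O}_{K_{\id{p}}}^{\times}$; thus $e(\id{p})=1$. If $\id{p}\mid p$, the lower bound is just as quick: $m=1$ would make $\chi$ factor through $\mathcal{O}_{K_{\id{p}}}^{\times}/(1+\id{p})\simeq k^{\times}$, a finite group of order prime to $p$, which cannot surject onto $\mu_p$; so $e(\id{p})\geq 2$.

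For the upper bound when $\id{p}\mid p$, I would use the unit filtration together with the behaviour of $p$-th powers. For every index $i>e_0/(p-1)$ the $p$-adic logarithm is an isomorphism $1+\id{p}^{i}\xrightarrow{\sim}\id{p}^{i}$ intertwining the $p$-th power map with multiplication by $p$, and since $v_{K_{\id{p}}}(p)=e_0$ this gives $(1+\id{p}^{i})^{p}=1+\id{p}^{i+e_0}$. Now suppose, for contradiction, that $m-1>\frac{p\,e_0}{p-1}$, i.e. that $i:=m-1-e_0>\frac{e_0}{p-1}$. Then $(1+\id{p}^{i})^{p}=1+\id{p}^{m-1}$, and as $\chi$ has order $p$ it annihilates every $p$-th power; hence $\chi$ is trivial on $1+\id{p}^{m-1}$, contradicting the choice of $m$. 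Therefore $m-1\leq\frac{p\,e_0}{p-1}$, and since $m-1$ is an integer we conclude $e(\id{p})=m\leq\bigl\lfloor\frac{p\,e(\id{p}\mid p)}{p-1}\bigr\rfloor+1$. Alternatively one can run the whole wild case through the different: the conductor--discriminant formula shows that the exponent of $\id{p}$ in $\operatorname{disc}(L/K)$ equals $(p-1)e(\id{p})$, which for our totally ramified local extension is the different exponent $d(L_{\id{P}}/K_{\id{p}})$; wild ramification forces $d>e-1=p-1$ (whence $e(\id{p})\geq 2$) and the standard bound $d\leq e-1+v_{\id{P}}(e)=p-1+p\,e_0$ (see, e.g., Serre's \emph{Local Fields}, Ch.~III) gives the same upper bound after dividing by $p-1$. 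The only genuinely delicate point, in either route, is isolating the threshold $e_0/(p-1)$ past which $p$-th powers act linearly on the unit filtration --- equivalently, controlling the single ramification break of a degree $p$ extension --- after which carrying the floor function through is routine; the tame case and the lower bound require nothing beyond the orders of the relevant groups.
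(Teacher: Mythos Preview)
Your proof is correct. The paper itself does not give a proof of this proposition: it simply cites Cohen's \emph{Advanced Topics in Computational Number Theory}, Propositions~3.3.21 and~3.3.22, and moves on. What you have written is essentially the standard argument underlying those statements --- reduce to the local conductor of a nontrivial order-$p$ character, dispose of the tame case and the lower bound by comparing group orders, and obtain the upper bound from the fact that $x\mapsto x^{p}$ maps $1+\id{p}^{i}$ onto $1+\id{p}^{i+e_{0}}$ once $i>e_{0}/(p-1)$. Your alternative via the different and the bound $d\leq e-1+v_{\id{P}}(e)$ is equally valid and is in fact the route Cohen takes. So there is no discrepancy in approach to discuss: you have supplied the details that the paper elected to outsource.
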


\begin{proof} See \cite{cohen} Proposition $3.3.21$ and Proposition $3.3.22$.
\end{proof}

We need a degree $5$ extension, and since the primes $5$ and $17$
ramify completely in $L_2$, the modulus is $\id{p}_5^{26}\id{p}_{17}$. We
compute such class group using Pari/GP (\cite{PARI}), and get that
such class group is isomorphic to
\[
C_{240}\times C_{40} \times C_{5} \times C_{5} \times C_{5} \times C_{5} \times C_{5}\times C_{5} \times C_{5} \times C_{5} \times C_{5} \times C_{5}.
\]

It should be pointed out that one can chose a basis of the characters
such that only one of them ramifies at $\id{p}_{17}$.

Recall that the extensions we are looking for come from degree $5$
extensions of $N$, but are not Galois over it. If we apply CFT to
$N$, the class group is isomorphic to
\[
C_{80} \times C_{20} \times C_5\times C_5 \times C_5,
\]
and no extension is ramified over the prime $17$.

\begin{lemma}
  If a rational prime $p$ is unramified in $\Q(\Ad)^{S_3\times C_2}$
  and has a prime over it with inertial degree $5$, then all primes
  dividing it have inertial degree $5$ and split completely in
  $L_2/\Q(\Ad)^{S_3\times C_2}$.
\label{lemma:removingbaselements}
\end{lemma}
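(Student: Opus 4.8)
The plan is to carry the whole argument out inside the extension $L$ appearing in the diagram above: $L/\Q$ is Galois with $\Gal(L/\Q)\simeq S_5\ltimes M_2^0(\F_5)$, where $S_5\simeq\PGL_2(\F_5)\simeq\Gal(\Q(\Ad)/\Q)$ acts on $M_2^0(\F_5)$ by the adjoint action; it contains $L_1\subseteq L_2$ as well as $N:=\Q(\Ad)^{S_3\times C_2}$, with $\Gal(L/N)=(S_3\times C_2)\ltimes M_2^0(\F_5)$ and $\Gal(L/L_2)=(C_3\times C_2)\ltimes V_2$, so in particular $N\subseteq L_2\subseteq L$. I would first note that the hypothesis ``$p$ is unramified in $N$'' forces $p\notin S=\{5,17\}$: the prime $5$ ramifies in $N$ by construction, and $17$ ramifies in $N$ as well, since $E$ has multiplicative reduction at $17$, so inertia at $17$ maps onto a subgroup of order $5$ of $S_5$, and an order-$5$ element cannot fix a coset of the order-$12$ subgroup $S_3\times C_2$. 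Hence $p$ is unramified in $L$, and a fortiori in $L_2$.

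Next I would translate ``residue degree $5$'' into combinatorics: the residue degrees of the primes of $N$ over $p$ are the lengths of the orbits of $\langle\Frob_p\rangle$ on the ten cosets $S_5/(S_3\times C_2)$. By hypothesis one orbit has length $5$, so $5\mid\mathrm{ord}(\Frob_p)$, which forces $\Frob_p$ to be a $5$-cycle (it lies in $S_5$). A $5$-cycle fixes no coset of $S_3\times C_2$ (again $5\nmid 12$), so every orbit has length $5$; this proves the first assertion and shows there are exactly two primes of $N$ above $p$, each of residue degree $5$.

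For the second assertion I would bound the order of a Frobenius of $L$ over $p$. Let $\mathfrak P$ be a prime of $L$ over $p$ and write $\Frob_{\mathfrak P}=(\bar\sigma,v)\in S_5\ltimes M_2^0(\F_5)$ with $\bar\sigma$ a $5$-cycle. Expanding in the semidirect product,
\[
\Frob_{\mathfrak P}^{\,5}=\bigl(\bar\sigma^{5},\,(1+\bar\sigma+\bar\sigma^{2}+\bar\sigma^{3}+\bar\sigma^{4})v\bigr)=\bigl(1,\,(\bar\sigma-1)^{4}v\bigr),
\]
using $1+X+X^{2}+X^{3}+X^{4}\equiv(X-1)^{4}\pmod 5$. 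Since $\bar\sigma$ acts on the $3$-dimensional $\F_5$-space $M_2^0(\F_5)$ through the adjoint action of an order-$5$ (hence unipotent) element of $\PGL_2(\F_5)$, the operator $\bar\sigma-1$ is nilpotent, so $(\bar\sigma-1)^{3}=0$ and thus $(\bar\sigma-1)^{4}=0$. Hence $\Frob_{\mathfrak P}^{\,5}=1$; as $\Frob_{\mathfrak P}$ surjects onto the order-$5$ element $\bar\sigma$, it has order exactly $5$, i.e. $f(\mathfrak P/p)=5$ for every prime $\mathfrak P$ of $L$ over $p$. Fixing a prime $\mathfrak p$ of $N$ over $p$ and $\mathfrak P\mid\mathfrak p$ in $L$, multiplicativity of residue degrees in towers gives $f(\mathfrak P/\mathfrak p)=f(\mathfrak P/p)/f(\mathfrak p/p)=5/5=1$, while $e(\mathfrak P/\mathfrak p)=1$ by the first paragraph; so $\mathfrak p$ splits completely in $L/N$, hence in the intermediate field $L_2/N$, as desired.

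The delicate point is the order computation for $\Frob_{\mathfrak P}$: one must make sure that $M_2^0(\F_5)$ is indeed the $3$-dimensional adjoint representation, that a $5$-cycle of $S_5$ corresponds to a unipotent element of $\PGL_2(\F_5)$, and hence that the ``norm'' endomorphism $1+\bar\sigma+\dots+\bar\sigma^{4}=(\bar\sigma-1)^{4}$ vanishes purely for dimension reasons. Everything else is elementary coset combinatorics in $S_5$ together with the multiplicativity of $e$ and $f$ in towers.
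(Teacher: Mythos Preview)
Your argument is correct and follows the same route as the paper: both work in $\Gal(L/\Q)\simeq S_5\ltimes M_2^0(\F_5)$, identify $\Frob_p$ as projecting to a $5$-cycle (the only cyclic subgroup of $S_5$ of order divisible by $5$), and conclude that its lift to the semidirect product still has order exactly $5$, forcing complete splitting in $L/N$ and hence in $L_2/N$. Your version is more explicit at the crucial step, computing $(\bar\sigma,v)^5=(1,(\bar\sigma-1)^4v)=1$ from the nilpotency of $\bar\sigma-1$ on the $3$-dimensional adjoint module, whereas the paper compresses this into the remark that a cyclic decomposition group cannot be a semidirect product of two groups of order divisible by $5$; you also add the preliminary check $p\notin\{5,17\}$ ensuring $p$ is unramified in all of $L$, which the paper leaves implicit.
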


\begin{proof}
  The maximal cyclic subgroup of $S_5$ of order divisible by $5$ is of
  order $5$, hence for any prime in $\Q(\Ad)$ the decomposition group
  is cyclic of order $5$. Since it does not intersect $S_3 \times
  C_2$, the first assertion follows and any prime over it in
  $\Q(\Ad)^{S_3\times C_2}$ splits completely in
  $\Q(\Ad)/\Q(\Ad)^{S_3\times C_2}$. But a cyclic group cannot be
  written as a semidirect product of groups whose order is divisible by
  $5$, hence it must split completely in $L/\Q(\Ad)$ as well.
\end{proof}

We search for $\coho^1(G_S,\Ad)$ computing all the elements in the
class group of $\Q(\Ad)^{C_6}$ that split completely for primes with
inertial degree $5$ in $\Q(\Ad)^{S_3\times C_2}/\Q$. With the first
such primes, we get a degree $5$ subspace, which contains a degree $3$
subspace coming from the class group of $\Q(\Ad)^{S_3 \times C_2}$, so
we get the $2$-dimensional subspace corresponding to the
$\coho^2(G_S,\Ad)$. Also, an important fact is that all the characters in this
$5$-dimensional space are unramified at $17$.

\begin{remark}
  To determine the number of auxiliary primes we need to add, we have
  to determine which elements in $\coho^1(G_S,\Ad)$ are trivial while
  restricting to $G_5$. Since the flat subspace is one dimensional
  (and the representations coming from our elliptic curve is in
  there), we are just led to prove which extensions give the same
  field extension of $\Q_5$. Note that since $\Q(\Ad)^{C_6}$ is
  totally ramified, and there are no solvable subgroups of $S_5$ whose
  order is divisible by $20$ and have order greater than $20$, the
  prime $\id{p}$ splits completely in $\Q(\Ad)/\Q(\Ad)^{C_6}$. Then if
  we restrict our representation to $G_5$, the representation we get
  has degree $20$ and is that of the completion of $M$ at
  $\id{p}_5$. 

  We can check the local behavior of our representations just by
  looking at the $5$-adic part of our character, and since our
  characters are only ramified at $5$, if two linearly independent
  ones have the same $5$-adic component, then the quotient would give
  a non-trivial unramified character of order $5$, but there are no
  such characters. Then one goes to zero (in $\coho^1(G_5,\Ad)/N_5$)
  and the other does not. In particular just one extra prime is
  enough.
\label{rem:star}
\end{remark}

We search for a prime $q \not\equiv \pm 1 \pmod 5$ and such that $a_q
\equiv \pm (q+1) \pmod {25}$, and $q=113$ is such a prime, since
$a_{113}=-14 \equiv -(113+1) \pmod{25}$.

\begin{lemma}
  There exists a weight $2$ modular form of level $17\cdot 113$ which
  is congruent modulo $5^2$ to the modular form attached to
  $E_{17a1}$.
\end{lemma}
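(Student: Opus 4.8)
The plan is to apply Theorem~B to the mod~$5^2$ representation $\rho_2$ attached to the weight~$2$ newform $f$ of level $17$ associated with $E_{17a1}$, feeding in the data assembled in this section. First I would check the hypotheses: $f$ has weight $k=2$ and $p=5>k$; since $E_{17a1}$ has conductor $17$ we have $5\nmid N$, and its coefficient field $\Q$ is unramified at $5$; and the mod~$5$ representation has image $\GL_2(\F_5)$, so $\SL_2(\F_5)\subseteq\Img(\overline{\rho})$. Oddness is automatic, and the local condition at $p=5$ holds because $5>k$. As assumed throughout this example, $\overline\rho$, and hence $\rho_2$, is ramified of Steinberg type at $17$, so the conductor of $\rho_2$ is $N'=17$ and $N''=17$ in the notation of Theorem~B.

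Theorem~B then produces a newform $g\neq f$ of weight~$2$ and level $17\,q_1\cdots q_r$, congruent to $f$ modulo $5^2$ and Steinberg at $17$, where the $q_i\not\equiv1\pmod5$ are the auxiliary primes (the forms are distinct because each $q_i$ is forced to divide the level of $g$, as in the proof of Theorem~B). It remains to pin down $r$ and the $q_i$. We have computed $\coho^2(G_S,\Ad)=0$, $\dim_{\F}\coho^1(G_S,\Ad)=2$, and $\sha^1_S((\Ad)^*)=0$; the vanishing of the Shafarevich group means no primes of Ramakrishna's first type are needed ($Q_1=\emptyset$), and no further primes we add create global obstructions. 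For the primes of the second type the relevant map is
\[
\coho^1(G_S,\Ad)\longrightarrow \coho^1(G_5,\Ad)/N_5,
\]
the factor at $17$ vanishing since $\coho^1(G_{17},\Ad)=0$ (as $17\not\equiv\pm1\pmod5$), and $N_5$ being the one-dimensional flat subspace. By Remark~\ref{rem:star} this map has rank one: the flat class coming from $E$ lands in $N_5$, while the other basis class does not. Hence exactly one auxiliary prime is needed, $r=1$, and following the recipe in the proof of Theorem~A it must be chosen, via Corollary~\ref{coro:setQ} and Lemma~\ref{Lemma:intersec}, so that the class dying at $5$ restricts nontrivially modulo $N_q$ to $\coho^1(G_q,\Ad)/N_q$.

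Finally I would verify that $q=113$ is an admissible such prime: $113\equiv3\pmod5$, so $113\not\equiv\pm1\pmod5$; $E$ has good reduction at $113$, so $\rho_2$ is unramified there; and $a_{113}=-14\equiv-(113+1)\pmod{25}$, so the eigenvalues of $\rho_2(\Frob_{113})$ are congruent modulo $25$ to $-1$ and $-113$, in particular of ratio $113$, which up to the unramified quadratic twist places $\rho_2|_{G_{113}}$ in the set $C_{113}$ constructed for auxiliary primes. A Chebotarev argument as in Corollary~\ref{coro:setQ} (using Lemma~\ref{Lemma:intersec}) shows $\Frob_{113}$ can in addition be taken in the conjugacy class that makes the surviving cohomology class restrict nontrivially modulo $N_{113}$, so Theorem~B applies with $r=1$ and $q_1=113$ and yields a weight~$2$ newform of level $17\cdot113$ congruent to $f$ modulo $5^2$. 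The only genuine work beyond invoking Theorem~B is the rank computation underlying Remark~\ref{rem:star} --- equivalently, checking that the two independent degree-$5$ class field theory extensions have distinct completions at $5$ modulo the flat subspace --- together with the search producing the prime $113$; everything else is a direct application of the main theorem to explicitly computed data.
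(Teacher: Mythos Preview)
Your overall strategy matches the paper's: use the computations of this section to see that $Q_1=\emptyset$ and that a single auxiliary prime suffices, then argue that $q=113$ is an admissible choice. The gap is in the last step. Once you have fixed $q=113$, its Frobenius class is determined; you cannot invoke Chebotarev to say that $\Frob_{113}$ ``can be taken'' in a favourable conjugacy class. Chebotarev produces \emph{some} prime with prescribed Frobenius, it says nothing about a prime you have already named. As written, your argument only shows that Theorem~B yields a congruent newform of level $17\cdot q$ for \emph{some} $q$, not for $q=113$.

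What the paper actually does at this point is a direct verification for $113$. First, it checks $f_E|_{G_{113}}\notin N_{113}$ by computing the characteristic polynomial $x^2+14x+113\equiv (x-12)(x-24)\pmod{25}$ and observing that the image of Frobenius has order $4$ modulo $5$ but order $20$ modulo $25$; this is precisely the condition that the flat class (which dies at $5$) restricts nontrivially modulo $N_{113}$. Second, it redoes the class field theory computation with $113$ adjoined to $S$, confirms that $\dim\coho^1(G_{S\cup\{113\}},\Ad)$ increases by exactly one, and argues that the new class is independent of the old ones in $\coho^1(G_5,\Ad)/N_5$ (otherwise one would obtain a degree~$5$ extension ramified only at $113$ not coming from $N$, contradicting a CFT check). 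Both of these are genuine computations specific to $113$ that your proposal replaces by an existence statement. You should replace the Chebotarev appeal with the Frobenius-order computation, and either carry out or at least acknowledge the second CFT verification.
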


\begin{proof}
  In view of the previous discussion, we just need to check that $113$
  is the right choice for the map
\[
\coho^1(G_{\{5,113\}},\Ad) \mapsto \coho^1(G_5,\Ad)/N_5 \times \coho^1(G_{113},\Ad)/N_{113},
\]
to be an isomorphism. We already know that the space
$\coho^1(G_{\{5\}},\Ad)$ is two dimensional, and that its image in
$\coho^1(G_5,\Ad)/N_5$ has dimension $1$ (the deformation $f_E$
corresponding to our elliptic curve maps to $0$), so we need to check
that the extra element is linearly independent with the non-zero
element in such cohomological group and that $f_E|_{G_{113}} \not \in
N_{113}$, which is equivalent to say that the Frobenius element at
$113$ modulo $5$ and modulo $25$ have different orders. Since
$a_{113}=-14$, the characteristic polynomial is given by $x^2+14x+113
\equiv (x-12)(x-24) \pmod{25}$, so the Frobenius element has order $4$
modulo $5$ and order $20$ modulo $25$.

We do the same computation as before, but adding this extra prime to
the ramification, and check that the cohomological dimension of
$\coho^1(G_{S \cup \{113\}},\Ad)$ increases by 1 (the whole $\F_5$
vector space computed using CFT has dimension $17$, but using
Lemma~\ref{lemma:removingbaselements} we get a $7$-dimensional
subspace, and the ones coming from $N$
satisfying the same property have dimension $4$). We just need to
check that the $5$-adic characters corresponding to these
$3$-dimensional subspace generate a $3$-dimensional space. Note that
we can chose a basis such that there is a $2$-dimensional part
$\<v_1,v_2>$ unramified at $113$, and a one dimensional part $\<v_3>$
ramified also at $113$. If the $5$-adic character of $v_3$ is in the
vector space spanned by $\<v_1,v_2>$, then we can multiply $v_3$ by
the inverse of the $5$-adic part of the character (which exists
globally) to get an extension in our subspace (which does not come from
$N$) only ramified at $113$. But using CFT, it
is easy to check that the only subspace satisfying
Lemma~\ref{lemma:removingbaselements} comes from an abelian extension
of $N$.
\end{proof}

\begin{remark}
  In this particular case, one can search for the form in the right
  space. We did such computation using Magma (\cite{magma}) and
  computed the space of newforms of level $17\cdot 113$. Such space
  contains (up to conjugation) $5$ newforms. Our curve is isomorphic
  modulo $25$ to an eigenform whose coefficient field has degree $43$
  over $\Q$, given by the polynomial
{\footnotesize  
\begin{multline*}
x^{43} - 6x^{42} - 52x^{41} + 373x^{40}+ 1125x^{39} - 10604x^{38} -
  11821x^{37} + 182630x^{36} + 26405x^{35} - 2127738x^{34} +
  979653x^{33} \\+ 17730287x^{32} - 15815881x^{31} - 108925194x^{30} +
  134740636x^{29} + 500970519x^{28} - 774455464x^{27} -
  1732542039x^{26} \\+ 3221093358x^{25} + 4479749953x^{24} -
  9965892052x^{23} - 8501952587x^{22} + 23170021972x^{21} +
  11368626528x^{20} \\- 40486609059x^{19} - 9675455698x^{18} +
  52796933022x^{17} + 3349112852x^{16} - 50684587408x^{15} +
  2843708080x^{14} \\+ 35061372555x^{13} - 4639214583x^{12} -
  16918972986x^{11} + 2949253955x^{10} + 5411942205x^9 - 1031364938x^8\\
  - 1053178460x^7 + 201802209x^6 + 106332326x^5 - 20249486x^4 -
  3919101x^3 + 714966x^2 + 1842x - 263.
\end{multline*}}%
\end{remark}

\bibliographystyle{alpha}
\bibliography{biblio}

\newcommand{\etalchar}[1]{$^{#1}$}
\begin{thebibliography}{DFG04}

\bibitem[BCP97]{magma}
Wieb Bosma, John Cannon, and Catherine Playoust.
\newblock The {M}agma algebra system. {I}. {T}he user language.
\newblock {\em J. Symbolic Comput.}, 24(3-4):235--265, 1997.
\newblock Computational algebra and number theory (London, 1993).

\bibitem[BD]{BD}
Christophe Breuil and Fred Diamond.
\newblock Formes modulaires de hilbert modulo p et valeurs d'extensions entre
  caractères galoisiens.
\newblock {\em Ann. Scient. de l'E.N.S., to appear}.

\bibitem[Car89]{carayol}
Henri Carayol.
\newblock Sur les repr\'esentations galoisiennes modulo {$l$} attach\'ees aux
  formes modulaires.
\newblock {\em Duke Math. J.}, 59(3):785--801, 1989.

\bibitem[Coh00]{cohen}
Henri Cohen.
\newblock {\em Advanced topics in computational number theory}, volume 193 of
  {\em Graduate Texts in Mathematics}.
\newblock Springer-Verlag, New York, 2000.

\bibitem[DFG04]{diamond3}
Fred Diamond, Matthias Flach, and Li~Guo.
\newblock The {T}amagawa number conjecture of adjoint motives of modular forms.
\newblock {\em Ann. Sci. \'Ecole Norm. Sup. (4)}, 37(5):663--727, 2004.

\bibitem[DS05]{Diamond}
Fred Diamond and Jerry Shurman.
\newblock {\em A First Course in Modular Forms}.
\newblock Springer, 2005.

\bibitem[Dum05]{dummigan}
Neil Dummigan.
\newblock Level-lowering for higher congruences of modular forms.
\newblock 2005.
\newblock \url{ http://www.neil-dummigan.staff.shef.ac.uk/levell4.dvi}.

\bibitem[Ger12]{gera}
David Geraghty.
\newblock Modularity lifting theorems for ordinary representations.
\newblock 2012.
\newblock Available at \url{http://www.math.ias.edu/~geraghty/}.

\bibitem[JR13]{jones}
John~W. Jones and David~P. Roberts.
\newblock A database of number fields.
\newblock \url{http://hobbes.la.asu.edu/NFDB}, 2013.

\bibitem[PAR13]{PARI}
{\em {PARI/GP, version {\tt 2.5.5}}}.
\newblock Bordeaux, 2013.
\newblock available from \url{http://pari.math.u-bordeaux.fr/}.

\bibitem[Ram93]{ravi1}
Ravi Ramakrishna.
\newblock On a variation of {M}azur's deformation functor.
\newblock {\em Compositio Math.}, 87(3):269--286, 1993.

\bibitem[Ram99]{ravi2}
Ravi Ramakrishna.
\newblock Lifting {G}alois representations.
\newblock {\em Invent. Math.}, 138(3):537--562, 1999.

\bibitem[Ram02]{ravi3}
Ravi Ramakrishna.
\newblock Deforming {G}alois representations and the conjectures of {S}erre and
  {F}ontaine-{M}azur.
\newblock {\em Ann. of Math. (2)}, 156(1):115--154, 2002.

\bibitem[RH08]{ravihamblen}
Ravi Ramakrishna and Spencer Hamblen.
\newblock Deformation of certain reducible galois representations, ii.
\newblock {\em American Journal of Mathematics}, 130, 2008.

\bibitem[Rib85]{Ribet}
Kenneth~A. Ribet.
\newblock On {$l$}-adic representations attached to modular forms. {II}.
\newblock {\em Glasgow Math. J.}, 27:185--194, 1985.

\bibitem[S{\etalchar{+}}13]{sage}
W.\thinspace{}A. Stein et~al.
\newblock {\em {S}age {M}athematics {S}oftware ({V}ersion 5.8)}.
\newblock The Sage Development Team, 2013.
\newblock {\tt http://www.sagemath.org}.

\bibitem[Ser89]{serreladic}
Jean-Pierre Serre.
\newblock {\em Abelian $\ell$-adic representations and elliptic curves}.
\newblock The advanced book program. Addison-Wesley publishing company, 1989.

\end{thebibliography}

\end{document}